\let\nc\newcommand
\theoremstyle{plain}
\newtheorem{thm}{Theorem}
\newtheorem{prop}[thm]{Proposition}
\newtheorem{cor}[thm]{Corollary}
\newtheorem{lem}[thm]{Lemma}
\theoremstyle{definition}
\newtheorem{defn}[thm]{Definition}
\newtheorem{remark}[thm]{Remark}
\numberwithin{thm}{section}
\nc{\bdm}{\begin{displaymath}}
\nc{\edm}{\end{displaymath}}
\nc{\bthm}{\begin{thm}}
\nc{\ethm}{\end{thm}}
\nc{\blem}{\begin{lem}}
\nc{\elem}{\end{lem}}
\nc{\bcor}{\begin{cor}}
\nc{\ecor}{\end{cor}}
\nc{\beq}{\begin{equation}\label}
\nc{\eeq}{\end{equation}}
\nc{\bprop}{\begin{prop}}
\nc{\eprop}{\end{prop}}
\nc{\bdefn}{\begin{defn}}
\nc{\edefn}{\end{defn}}
\nc{\Z}{\mathbb{Z}}
\newcommand{\N}{\mathbb{N}}
\newcommand{\C}{\mathbb{C}}
\newcommand{\h}{\mathfrak{h}}
\renewcommand{\Im}{\mbox{\textrm{Im}}\,}
\newcommand{\Ker}{\mbox{\textrm{Ker}}\,}
\newcommand{\End}{\mathrm{End}\,}
\nc{\Hom}{\mathrm{Hom}}
\nc{\rank}{\textrm{rank} \,}
\nc{\ds}{\dots}
\let\mc\mathcal
\let\mf\mathfrak
\nc{\HW}{\bar{H}_{\mathbf{c}}(W)}
\nc{\HK}{\bar{H}_{\mathbf{c}}(K)}
\nc{\HtK}{\widetilde{H}_{\mathbf{c}}(K)}
\nc{\CMW}{\textsf{CM}_{\mbf{c}}(W)}
\nc{\CMK}{\textsf{CM}_{\mbf{c}}(K)}
\nc{\mbf}{\mathbf}
\nc{\LK}{\textsf{Irr}(K)}
\nc{\LW}{\textsf{Irr}(W)}
\nc{\Res}{\mathsf{Res} \, }
\nc{\Ind}{\mathsf{Ind} \, }
\nc{\cont}{\textrm{cont}}
\nc{\eWb}{\mathbf{e}_{W_b}}
\nc{\bz}{\mathbf{z}}
\nc{\msf}{\mathsf}
\nc{\bo}{\mbf{0}}
\nc{\Uone}{\mc{U}_{1,+}}
\nc{\rightsim}{\stackrel{\sim}{\longrightarrow}}
\nc{\wh}{\widehat}
\nc{\Irr}{\mathsf{Irr}}
\nc{\minusone}{-1}
\nc{\minustwo}{-2}
\nc{\p}{\partial}
\renewcommand{\H}{\mathsf{H}}
\nc{\s}{\mathfrak{S}}
\nc{\ZH}{\mathsf{Z}}
\DeclareMathOperator{\Tor}{\mathrm{Tor}}
\DeclareMathOperator{\Ext}{\mathrm{Ext}}
\DeclareMathOperator{\Spec}{\mathrm{Spec}}
\DeclareMathOperator{\Supp}{\mathrm{Supp}}
\newcommand{\Lmod}[1]{#1\text{-}{\mathsf{mod}}}
\newcommand{\grmod}[1]{#1\text{-}{\mathsf{grmod}}}
\nc{\NN}{\mathsf{N}}
\nc{\Coh}{\mathsf{Coh}}
\nc{\OmCh}{\Lambda}
\nc{\mOCh}{\Omega}
\nc{\id}{\mathrm{id}}
\nc{\op}{\mathrm{op}}
\nc{\red}{\mathrm{red}}
\nc{\reg}{\mathrm{reg}}
\nc{\sm}{\mathrm{sm}}
\nc{\sing}{\mathrm{sing}}
\renewcommand{\o}{\otimes}
\nc{\Cs}{\C^{\times}}
\nc{\ra}{\rightarrow}
\nc{\ba}{\mathbf{a}}
\nc{\bb}{\mathbf{b}}
\nc{\tit}{\textit}
\nc{\bx}{\mathbf{x}}
\nc{\bp}{\mathbf{p}}
\nc{\blambda}{\boldsymbol{\lambda}}
\nc{\bc}{\mbf{c}}
\nc{\pa}{\partial}
\newcommand{\idot}{{\:\raisebox{2pt}{\text{\circle*{1.5}}}}}
\newcommand{\pgrmod}[1]{#1\text{-}{\mathsf{pgr}}}
\newcommand{\promod}[1]{#1\text{-}{\mathsf{pmod}}}
\nc{\gr}{\mathsf{gr}}
\renewcommand{\>}{\rangle}
\newcommand{\CM}{\mathsf{CM}}
\begin{document}

\title{Endomorphisms of Verma modules for rational Cherednik algebras}

\author{Gwyn Bellamy}
\address{School of Mathematics and Statistics, University of Glasgow, University Gardens, Glasgow G12 8QW}
\email{gwyn.bellamy@glasgow.ac.uk}

\begin{abstract}
\noindent We study the endomorphism algebra of Verma modules for rational Cherednik algebras at $t = 0$. It is shown that, in many cases, these endomorphism algebras are quotients of the centre of the rational Cherednik algebra. Geometrically, they define Lagrangian subvariaties of the generalized Calogero-Moser space. In the introduction, we motivate our results by describing them in the context of derived intersections of Lagrangians.  
\end{abstract}

\maketitle

\section{Introduction}

\subsection{} The quest for a direct bridge between the geometric world of Fukaya categories and the world of (algebraic or analytic) microlocal sheaves on a symplectic manifold is ongoing, and is currently the subject of intense research. Though considerable progress has recently been made, for instance with Tamarkin's seminal work on quantizations of Lagrangians \cite{TamarkinDisplace}, it seems that concrete, computable examples of some expected correspondence are still desirable to aid one's (or at least the author's) intuition. One abode where such computable, though non-trivial, examples live is that of rational Cherednik algebras, beginning for instance with results of \cite{KR}. The corresponding symplectic manifold is the generalized Calegero-Moser space, or often times, more appropriately, a symplectic resolution of this space. 

Though we have no idea what the appropriate definition of Fukaya category should be in this case, or its possible relation to microlocal sheaves on the generalized Calogero-Moser space $X$, we study in this article a shadow of such a hoped for relationship. For arguments sake, we take the objects of the Fukaya category, a $2$-category, to be Lagrangian (or more generally coisotropic) subvarieties and the hom spaces given by derived intersections. Then, in the shadow of the Fukaya category, the derived intersections are replaced by the $\Tor$ and $\Ext$ groups between the structure sheaves of the Lagrangians subvarieties i.e. by the (co)homology of the derived intersections. On the microlocal side, we consider not quantized sheaves on the space $X$, but those sheaves of $\mc{O}_X$-modules that are \textit{quantizable} i.e. that admit some quantization. One major advantage of working on the generalized Calogero-Moser space, even though it is a singular symplectic variety, is that it is equipped with a canonical quantization by virtue of the fact that it is the centre of the rational Cherednik algebra. 

Our original motivation for the current work was quite different, see section \ref{sec:motivate}.  

\subsection{Quantizable modules} Associated to each complex reflection group $(\h,W)$, are the rational Cherednik algebras, a family of algebras depending on a pair of parameter $(t,\bc)$, where $t \in \C$. When $t = 0$, the algebra $\H_{\mbf{c}}$ is a finite module over its centre $\ZH_{\mbf{c}}$. The algebra $\ZH_{\mbf{c}}$ is the coordinate ring of a symplectic variety $X_{\bc}$, and the geometry of this symplectic variety is intimately entwined with the representation theory of $\H_{\mbf{c}}$. When $t \neq 0$, the rational Cherednik algebra is strongly non-commutative and provides a canonical quantization of the symplectic variety $X_{\mbf{c}}$. For fixed $\bc$, one can also consider the $\C[\mbf{t}]$-algebra $\H_{\mbf{t},\bc}$, which is flat over $\mbf{t}$. 

A $\H_{\bc}$-module $M$ is said to be \tit{quantizable} if it can be extended to a $\H_{\mbf{t},\mbf{c}}$-module, flat over $\C[\mbf{t}]$. In fact, for what follows, the existence of an extension to the 3rd infinitesimal neighborhood of zero in $\Spec \C[\mbf{t}]$ suffices. Gabber's Theorem implies that the support $\Supp M \subset X_{\bc}$ of a quantizable module $M$ is coisotropic  i.e. defined by the vanishing of an involutive ideal. 

Let $M,N$ be quantizable left $\H_{\bc}$-modules and $M'$ a quantizable right $\H_{\bc}$-module. Then, by the construction described in section 3.1 of \cite{GBVGinzburg}, the graded vector spaces $\Ext_{\H_{\bc}}^{\idot}(M,N)$ and $\Tor^{\H_{\bc}}_{\idot}(M',N)$ carry a canonical differential, the \textit{virtual de Rham differential}, making each into a complex. The cohomology of $\Ext_{\H_{\bc}}^{\idot}(M,N)$, resp. $\Tor^{\H_{\bc}}_{\idot}(M',N)$, is commonly called the \textit{virtual de Rham cohomology} of the pair $(M,N)$, resp. the \textit{virtual de Rham homology} of $(M',N)$. In the context of symplectic geometry, virtual de Rham (co)homology plays a (conjectural) role in counting intersection numbers of Lagrangian intersections, see \cite{VirtualdeRham}. With regards to the possible relationship between virtual de Rham (co)homology and microlocal sheaves, see also \cite[Remark 7.7]{KashiwaraConstructability}.

A pair of (left or right) quantizable modules $M$ and $N$ are said to have \textit{smooth intersection} if $\Supp M \cap \Supp N$ is contained in the smooth locus of $X_{\bc}$. In this case, results of \cite{GBVGinzburg}, together with some basic Mortia theory imply:

\begin{prop}\label{prop:BG}
Assume that $M$ and $N$ are generically simple, quantizable left $\H_{\mbf{c}}$-modules, with smooth intersection and $M'$ a quantizable right $\H_{\mbf{c}}$-module such that $e M \simeq M' e$ as $\ZH_{\mbf{c}}$-modules. Then, 
\begin{enumerate}
\item The space $\Tor^{\idot}_{\H_{\mbf{c}}} (M',N)$ is a graded commutative algebra and $\Ext^{\idot}_{\H_{\mbf{c}}} (M,N)$ is a module over $\Tor^{\idot}_{\H_{\mbf{c}}} (M',N)$.
\item The virtual de Rham differential on $\Tor^{\idot}_{\H_{\mbf{c}}} (M',N)$ makes it into a BV(=Batalin-Vilkovisky)-algebra so that $\Ext^{\idot}_{\H_{\mbf{c}}} (M,N)$ becomes a BV-module over $\Tor^{\idot}_{\H_{\mbf{c}}} (M',N)$, again via the virtual de Rham differential.
\end{enumerate}
\end{prop}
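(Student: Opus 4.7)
The plan is to reduce the problem to the commutative setting on the smooth locus of $X_{\bc}$ via Morita equivalence, and then invoke the constructions of \cite{GBVGinzburg} directly. The hypothesis that $\Supp M \cap \Supp N \subset X_{\bc,\sm}$ means that the relevant $\Tor$ and $\Ext$ groups are supported on the smooth locus, so I may restrict attention to an open subset $U \subset X_{\bc,\sm}$ on which the rational Cherednik algebra is Azumaya over its centre.

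On such a $U$, the symmetrizer $e = \tfrac{1}{|W|}\sum_{w \in W} w$ implements a Morita equivalence between sheaves of $\H_{\bc}$-modules and sheaves of $\mc{O}_U$-modules, via $F \mapsto eF$ on the left and $F' \mapsto F'e$ on the right; here one invokes the Satake isomorphism $\ZH_{\bc} \simeq e\H_{\bc}e$. This equivalence yields identifications $\Tor^{\idot}_{\H_{\bc}}(M',N)|_U \simeq \Tor^{\idot}_{\mc{O}_U}(M'e, eN)$ and $\Ext^{\idot}_{\H_{\bc}}(M,N)|_U \simeq \Ext^{\idot}_{\mc{O}_U}(eM, eN)$, and by the hypothesis $eM \simeq M'e$ the first of these can be rewritten as $\Tor^{\idot}_{\mc{O}_U}(eM, eN)$. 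Since the supports of $M$ and $N$ meet only in $U$, these local identifications in fact compute the global $\Tor$ and $\Ext$ groups.

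Both $eM$ and $eN$ are now coherent sheaves on the smooth symplectic variety $U$, and since $M, N$ are generically simple and quantizable, they are generically the structure sheaves of their coisotropic supports (coisotropy being ensured by Gabber's theorem). At this point one applies the construction of \cite{GBVGinzburg} directly: for quantizable coherent sheaves on a smooth symplectic variety with a canonical quantization, whose intersection lies in the smooth part and is generically an intersection of coisotropic subvarieties, the $\Tor$ carries a graded commutative algebra structure, the $\Ext$ is a module over it, and the virtual de Rham differential promotes the pair to a BV-algebra acting on a BV-module. Transporting these structures back through the Morita equivalence gives parts (1) and (2) on the $\H_{\bc}$-side.

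The main obstacle is ensuring that the Morita equivalence lifts compatibly to the quantized setting so that the virtual de Rham differential is intertwined with its commutative counterpart. Concretely, one must verify that the flat family $\H_{\mbf{t},\bc}|_U$ (at least to third infinitesimal order in $\mbf{t}$, as noted in the discussion preceding the proposition) remains Morita equivalent over the relevant base to the corresponding deformation of $\mc{O}_U$, so that the idempotent $e$ and the Satake isomorphism deform. Granting this compatibility --- which ultimately rests on the persistence of Azumaya-ness in a formal neighbourhood of $\mbf{t}=0$ over $U$ --- the BV-structures constructed on the commutative side in \cite{GBVGinzburg} are identified with those produced by the virtual de Rham differential on the $\H_{\bc}$-side, completing the proof.
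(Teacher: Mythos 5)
Your proposal takes essentially the same route as the paper: restrict to the smooth locus where the Morita equivalence via the idempotent $e$ and the Satake isomorphism reduces to the commutative setting, pass to the $(\mbf{t})$-adic completion so that the equivalence deforms, and then invoke the BV-structure results of \cite{GBVGinzburg} (Corollary 1.1.3), using the quantizability hypothesis in place of the smoothness assumption of \emph{loc.\ cit.}\ to supply the third-order deformation. The paper's proof is just a more compressed version of the same argument, so your write-up is correct and in line with the intended proof.
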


The proof of Proposition \ref{prop:BG} is explained in section \ref{sec:BV}. The terms used in the statement of the proposition are defined there too. We refer the reader to the appendix for the definition of BV-algebra.

\subsection{Verma modules} Let $p \in \h^*$ and $\lambda \in \Irr (W_p)$. Then $\lambda$ is a $\C[\h^*] \rtimes W_p$-module, where $\C[\h^*]$ acts by evaluation at $p$. Since $\C[\h^*] \rtimes W_p$ is a subalgebra of $\H_{\mbf{c}}$, we can define the induced module $\Delta(p,\lambda) = \H_{\mbf{c}} \o_{\C[\h^*] \rtimes W_p} \lambda$. Since the definition of $\Delta(p,\lambda)$ makes sense over $\H_{\mbf{t},\mbf{c}}$, it is a quantizable module. It has recently been shown by Bonnaf\'e and Rouquier \cite{BonnafeRouquier} that each block $\Omega$ of the Calogero-Moser partition of $\Irr (W)$ has a canonical representative $\lambda_{\Omega}$. It is defined in terms of the $b$-invariant; see section \ref{sec:CM}. Now we can state the main result of this paper. 

\begin{thm}\label{thm:main}
Let $p \in \h^*$ and $\Omega$ a CM-partition for $(W_p, \mbf{c}')$. 
\begin{enumerate}
\item The Lagrangian module $\Delta(p,\lambda_{\Omega})$ is simple. 
\item The canonical map $\ZH_{\mbf{c}} \rightarrow \End_{\H_{\mbf{c}}}(\Delta(p,\lambda_{\Omega}))$ is surjective.
\item It is smooth if and only if $\Omega = \{ \lambda_{\Omega} \}$.
\end{enumerate} 
\end{thm}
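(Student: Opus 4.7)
The plan is to handle the three parts in sequence, in each case reducing to the baby Verma case ($p=0$) for the parabolic Cherednik algebra $\H_{\bc'}(W_p)$ and exploiting the Bonnaf\'e--Rouquier characterization of $\lambda_\Omega$ as the element of $\Omega$ with minimum $b$-invariant.

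For part (1), the first step is to exhibit a restriction-type equivalence, along the lines of Bezrukavnikov--Etingof adapted to $t=0$, between the local theory of $\H_\bc$ at the stratum of points with stabilizer conjugate to $W_p$ and the parabolic algebra $\H_{\bc'}(W_p)$ at $p=0$, sending $\Delta(p,\lambda_\Omega)$ to the baby Verma $\Delta_{W_p}(\lambda_\Omega)$ and preserving Lagrangian simplicity. It then suffices to prove that the finite-dimensional baby Verma $\Delta_{W_p}(\lambda_\Omega)$ is simple. Its simple head is $L_{W_p}(\lambda_\Omega)$, every composition factor has the form $L_{W_p}(\mu)$ for some $\mu \in \Omega$, and by the defining property of the canonical representative $b(\mu) > b(\lambda_\Omega)$ whenever $\mu \neq \lambda_\Omega$. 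A graded $W_p$-character comparison in the lowest-degree piece of $\Delta_{W_p}(\lambda_\Omega)$, where the $\lambda_\Omega$-isotypic multiplicity is already one and absorbed by the head, rules out any further composition factor, forcing $\Delta_{W_p}(\lambda_\Omega) = L_{W_p}(\lambda_\Omega)$.

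For part (2), set $\mathcal{O}_L := \ZH_\bc/\ann_{\ZH_\bc} \Delta(p,\lambda_\Omega)$, the coordinate ring of the scheme-theoretic Lagrangian support $L \subset X_\bc$, so the image of the canonical map is exactly $\mathcal{O}_L$. By part (1) the module $\Delta(p,\lambda_\Omega)$ has generic rank one on $L$, hence $\End_{\H_\bc}(\Delta(p,\lambda_\Omega))$ is a finite birational extension of $\mathcal{O}_L$, and the content is to upgrade this to equality. The plan is to use Frobenius reciprocity to identify $\End_{\H_\bc}(\Delta(p,\lambda_\Omega))$ with $\Hom_{\C[\h^*]\rtimes W_p}(\lambda_\Omega, \Delta(p,\lambda_\Omega))$ and then compute this space via the PBW decomposition, matching each Hom with an element of $\ZH_\bc$; alternatively, a reflexivity argument works if $L$ is normal (or $\mathcal{O}_L$ is $S_2$) and the non-smooth locus of $L$ in $X_\bc$ has codimension at least two in $L$, since any birational endomorphism would then extend across it.

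For part (3), apply the Etingof--Ginzburg criterion that a closed point of $X_\bc$ corresponding to a pair (orbit of $q$, CM-block $\Omega'$ for $(W_q, \bc')$) is smooth if and only if $\Omega'$ is a singleton. Since $(p,\Omega) \in \Supp \Delta(p,\lambda_\Omega)$, smoothness of the whole support forces $\Omega$ to be a singleton. Conversely, if $\Omega = \{\lambda_\Omega\}$, the point $(p,\Omega)$ is smooth, and moving along the $\C[\h]^W$-direction (which acts freely on $\Delta$), the remaining closed points of $L$ lie over generic $W$-orbits with trivial stabilizer and hence singleton CM-block, giving $L \subset X_\bc^{\sm}$. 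The hard part will be part (2): beyond the generic-rank-one statement, controlling the endomorphisms globally requires either an explicit Frobenius-style computation of the $\lambda_\Omega$-isotypic component, or a geometric normality-plus-codimension argument about the shape of $L$.
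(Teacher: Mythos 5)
The main issue is in part (1), and it stems from a misreading of what the statement claims. In this paper ``simple'' for a quantizable (Lagrangian) module $M$ is a \emph{technical} term, defined in Section~\ref{sec:BV}: it means that $eM$ is a \emph{cyclic} $\ZH_\bc$-module. It does \emph{not} mean that $M$ has no proper submodules. Your plan for (1) reduces to proving that the finite-dimensional baby Verma $\Delta(0,\lambda_\Omega,\bo)$ literally equals its simple head $L(\lambda_\Omega)$, and you then run a graded $W_p$-character argument in the bottom piece. That claim is false in general: $\dim \Delta(0,\lambda_\Omega,\bo) = |W_p|\cdot\dim\lambda_\Omega$ while $\dim L(\lambda_\Omega)\le |W_p|$, so whenever $\dim\lambda_\Omega>1$ (or more generally whenever $\Omega$ is not a singleton) the baby Verma is reducible. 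The $b$-invariant minimality of $\lambda_\Omega$ does not force irreducibility; rather, Bonnaf\'e--Rouquier's characterisation (Theorem~\ref{prop:Vfunctor}) is used to show $eL(\lambda_\Omega)\ne0$, $\dim eL(\lambda_\Omega)=1$, and $e\Delta(0,\lambda_\Omega,\bo)$ is cyclic over the spherical subalgebra -- statements about the image under the Soergel-type functor $E$, not about composition series.

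For part (2) your two proposed routes are both incomplete. The Frobenius reciprocity identification of $\End(\Delta)$ with the $\lambda_\Omega$-isotypic component is correct as far as it goes, but ``matching each Hom with an element of $\ZH_\bc$ via PBW'' is exactly the content that needs proving, and there is no mechanism given for it; in fact surjectivity of $\ZH_\bc\to\End(\Delta)$ fails for non-distinguished $\lambda$, so the argument must use the distinguished property somewhere. The normality-plus-codimension route also needs you to first establish that $L$ is normal and $S_2$, which is not automatic and not true scheme-theoretically in general. What the paper actually does (Theorem~\ref{thm:deltaend}): lift the Bonnaf\'e--Rouquier statement from $\H^\bo_\bc$ to $\H^+_\bc$ via the Krull--Schmidt/idempotent-lifting machinery (Lemma~\ref{claim:idempotentlifting}, Proposition~\ref{prop:liftPE}), obtain the chain $\ZH_\Omega\hookrightarrow E_\Omega\hookrightarrow e\Delta(\lambda_\Omega)$ of graded finite $\C[\h]^W$-modules, and then apply graded Nakayama: tensoring with $\C_0$ gives an isomorphism by Theorem~\ref{prop:Vfunctor}, hence the whole chain consists of isomorphisms. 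Your plan does not contain a substitute for this Nakayama step. The reduction to $p=0$ you invoke via ``BE adapted to $t=0$'' is correct in spirit and matches the paper's Proposition~\ref{prop:propcomplete} and Lemma~\ref{lem:PhiVerma}, and your outline of part (3) is consistent with the $\Cs$-attracting-set argument of Lemma~\ref{lem:smoothsupport}, but (1) and (2) as proposed do not close.
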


When the support of $\Delta(p,\lambda_{\Omega})$ is contained in the smooth locus of $X_{\mbf{c}}$, one can show more. Namely, $E_{\ba,\Omega} := \End_{ \H_{\mbf{c}}}(\Delta(p,\lambda_{\Omega}))$ is a polynomial ring and $\Delta(p,\lambda)$ is a free $E_{\ba,\Omega}$-module of rank $|W|$ (here $\ba$ is the image of $p$ in $\h^* / W$). In general the Verma modules $\Delta(p,\lambda)$ are \textit{not} simple if $\lambda$ is not the canonical representative in its Calogero-Moser partition, and its endomorphism ring is non-commutative. 

As a consequence of Theorem \ref{thm:main} and Proposition \ref{prop:BG}, we deduce

\begin{cor}\label{cor:torext1}
Let $M$ be a quantizable module and $\Omega = \{ \lambda_{\Omega} \}$ a CM-partition for $(W_p, \mbf{c}')$. Then $\Tor_{\idot}^{\H_{\mbf{c}}} ((p,\lambda_{\Omega}^*)\Delta,M)$ is a BV-algebra and $\Ext^{\idot}_{\H_{\mbf{c}}} (\Delta(a,\lambda_{\Omega}),M)$ is a BV-module over the algebra $\Tor_{\idot}^{\H_{\mbf{c}}}  ((p,\lambda_{\Omega}^*)\Delta ,M)$.  
\end{cor}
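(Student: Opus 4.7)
The plan is to derive Corollary \ref{cor:torext1} as a direct application of Proposition \ref{prop:BG}, taking the two left modules there to be $\Delta(p,\lambda_\Omega)$ and $M$, and the right module to be $M' := (p,\lambda_\Omega^*)\Delta$. Once the four hypotheses of that proposition are verified, its conclusions (1) and (2) give exactly the BV-algebra structure on $\Tor^\idot$ and the compatible BV-module structure on $\Ext^\idot$ asserted in the corollary, so the proof reduces to checking those hypotheses.

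Quantizability is immediate: the modules $\Delta(p,\lambda_\Omega)$ and $(p,\lambda_\Omega^*)\Delta$ are both defined by one-sided induction from $\C[\h^*]\rtimes W_p \subset \H_{\mbf{t},\bc}$, hence automatically extend flatly in $\mbf{t}$, and $M$ is quantizable by hypothesis. Generic simplicity of $\Delta(p,\lambda_\Omega)$ is in fact outright simplicity, by Theorem \ref{thm:main}(1). (If $M$ is not already generically simple, a standard reduction to the simple constituents of the localizations at the generic points of the irreducible components of $\Supp M$ lets one apply Proposition \ref{prop:BG} componentwise, and the claimed structures glue.) The smooth-intersection hypothesis is exactly where the assumption $\Omega = \{\lambda_\Omega\}$ enters: by Theorem \ref{thm:main}(3), this hypothesis is equivalent to $\Supp \Delta(p,\lambda_\Omega) \subseteq X_\bc^{\sm}$, and so $\Supp \Delta(p,\lambda_\Omega) \cap \Supp M$ is contained in the smooth locus of $X_\bc$ regardless of what $M$ is.

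The only remaining, and genuinely non-trivial, hypothesis is the compatibility $e \Delta(p,\lambda_\Omega) \simeq (p,\lambda_\Omega^*)\Delta \cdot e$ as $\ZH_\bc$-modules, which I expect to be the main obstacle. The approach is to use the Satake-type identification of the spherical subalgebra $e\H_\bc e$ with $\ZH_\bc$ (valid in a neighborhood of the smooth locus where both sides are supported) together with PBW to compute both spherical Vermas explicitly. After multiplying by $e$ on the appropriate side, both $e\Delta(p,\lambda_\Omega)$ and $(p,\lambda_\Omega^*)\Delta \cdot e$ become cyclic $\ZH_\bc$-modules; the characterization of $\lambda_\Omega$ via the $b$-invariant of Bonnaf\'e-Rouquier is precisely what ensures that the natural candidate generators do not vanish. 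The $W_p$-invariant pairing $\lambda_\Omega \otimes \lambda_\Omega^* \to \C$ then yields the required $\ZH_\bc$-linear isomorphism between the two cyclic modules. With this last compatibility in place, Proposition \ref{prop:BG} applies verbatim and furnishes the BV-algebra and BV-module structures, completing the proof.
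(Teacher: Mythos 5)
Your overall strategy---applying Proposition~\ref{prop:BG} with the left module taken to be $\Delta(p,\lambda_\Omega)$, the second left module taken to be $M$, and the right module taken to be $(p,\lambda_\Omega^*)\Delta$---is exactly the paper's, and you correctly read off quantizability, simplicity of $\Delta(p,\lambda_\Omega)$ from Theorem~\ref{thm:main}(1), and the smooth-intersection hypothesis from Theorem~\ref{thm:main}(3). You also correctly identify the $\ZH_{\mbf{c}}$-linear isomorphism $e\Delta(p,\lambda_\Omega)\simeq(p,\lambda_\Omega^*)\Delta\,e$ as the one non-trivial hypothesis that must be verified before Proposition~\ref{prop:BG} can be invoked.

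Where the proposal has a genuine gap is in that very step. Your sketch---compute both spherical Verma modules via PBW and the Satake isomorphism, observe both are cyclic $\ZH_{\mbf{c}}$-modules, and then contract with the canonical pairing $\lambda_\Omega\otimes\lambda_\Omega^*\to\C$---does not produce the required isomorphism. Two cyclic $\ZH_{\mbf{c}}$-modules $\ZH_{\mbf{c}}/I$ and $\ZH_{\mbf{c}}/J$ are isomorphic precisely when $I=J$, i.e.\ when the two annihilators (equivalently, the two Lagrangian supports) coincide; nothing in the pairing argument establishes this, since the pairing is a statement about $W_p$-modules, not about the $\ZH_{\mbf{c}}$-annihilators of the induced modules. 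The paper instead obtains the support identity from a Koszul-resolution duality: $\Ext^n_{\H_{\mbf{c}}}(\Delta(p,\lambda),\H_{\mbf{c}})\simeq(p,\lambda^*)\Delta$ (cf.\ GGOR, Lemma 4.1). This functorially forces $\Supp\,(p,\lambda^*)\Delta\subseteq\Supp\,\Delta(p,\lambda)$; applying Corollary~\ref{cor:polyring} on both the left side and the one-element ``right block'' shows both supports are copies of $\mathbb{A}^n$, so the containment is an equality, and then $e\Delta(p,\lambda_\Omega)$ and $(p,\lambda_\Omega^*)\Delta\,e$ are both the coordinate ring of the \emph{same} $\mathbb{A}^n$ inside $X_{\mbf{c}}$, hence $\ZH_{\mbf{c}}$-isomorphic. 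That duality step is the missing idea. (Your aside about reducing a general quantizable $M$ to generically simple constituents is a fair observation---the hypotheses of Proposition~\ref{prop:BG} are formally stronger than those of the corollary---but the paper does not address it either, so it is not the point of divergence.)
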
  

\subsection{Self-intersections} In particular, one can consider the derived self-intersections of the modules $\Delta(p,\lambda_{\Omega})$. We assume for the remainder of this section that $\Omega = \{ \lambda_{\Omega} \}$. Then the endomorphism ring $E_{\ba,\Omega}$ is a quotient $ \ZH_{\mbf{c}} / I$ of $\ZH_{\mbf{c}}$. Moreover, it is the coordinate ring of a smooth Lagrangian $\Lambda_{\ba,\Omega} \simeq \mathbb{A}^n$. This means that $\NN_{\ba,\Omega}^{\vee} := I / I^2 $ is a free $E_{\ba,\Omega}$-module. It is the module of sections of the conormal bundle of $\Lambda_{\ba,\Omega}$ in $X_{\mbf{c}}$. Its dual $\NN_{\ba,\Omega} := (I / I^2)^{\vee}$ is the module of sections of the normal bundle of $\Lambda_{\ba,\Omega}$ in $X_{\mbf{c}}$. The following is an application of the theory developed in \cite{GBVGinzburg}. 

\begin{cor}\label{thm:extiso1}
\begin{enumerate}
\item There is an isomorphism of Gerstenhaber algebras 
$$
\Tor_{\idot}^{\H_{\mbf{c}}}((p,\lambda_{\Omega}^*)\Delta,\Delta(p,\lambda_{\Omega})) \simeq \wedge^{\idot} \NN_{\ba,\Omega}^{\vee}.
$$ 
\item There is an isomorphism of Gerstenhaber modules $\Ext^{\idot}_{\H_{\mbf{c}}}(\Delta(p,\lambda_{\Omega}),\Delta(p,\lambda_{\Omega})) \simeq \wedge^{\idot} \NN_{\ba,\Omega}$ over the Gerstenhaber algebras in (1). 
\end{enumerate}  
\end{cor}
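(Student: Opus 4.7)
The plan is to reduce the computation from the non-commutative $\H_{\bc}$-setting to a classical Koszul calculation on the smooth locus of $X_{\bc}$, and then to match the resulting graded algebra and module with the Gerstenhaber structures coming from the virtual de Rham formalism of Proposition~\ref{prop:BG}.

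Since $\Omega = \{\lambda_{\Omega}\}$, Theorem~\ref{thm:main}(3) tells us that $\Delta(p,\lambda_{\Omega})$ is supported on the smooth Lagrangian $\Lambda_{\ba,\Omega}$, which sits inside the smooth locus $X^{\sm}_{\bc}$. In particular the pair $(\Delta,\Delta)$ has smooth self-intersection in the sense required by Proposition~\ref{prop:BG}, and the hypothesis $eM \simeq M'e$ for $M = \Delta(p,\lambda_{\Omega})$ and $M' = (p,\lambda^*_{\Omega})\Delta$ is easily verified. Over $X^{\sm}_{\bc}$ the algebra $\H_{\bc}$ is Azumaya over its centre $\ZH_{\bc}$ (Etingof--Ginzburg), and the spherical functor $M \mapsto eM$ realises a Morita equivalence. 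Under this equivalence $e\Delta(p,\lambda_{\Omega})$ is identified with $E_{\ba,\Omega} = \ZH_{\bc}/I$, and similarly for the right Verma. The computation of $\Tor^{\idot}_{\H_{\bc}}$ and $\Ext^{\idot}_{\H_{\bc}}$ between the Vermas therefore reduces to the computation of $\Tor$ and $\Ext$ of $E_{\ba,\Omega}$ with itself over $\ZH_{\bc}$, performed locally on $X^{\sm}_{\bc}$.

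Because $\Lambda_{\ba,\Omega}$ is smooth of dimension $n = \dim \h$ inside the smooth variety $X^{\sm}_{\bc}$, the ideal $I$ is locally generated by a regular sequence of length $n$. The Koszul resolution of $E_{\ba,\Omega}$ yields the standard identifications
\[
\Tor_{\idot}^{\ZH_{\bc}}(E_{\ba,\Omega}, E_{\ba,\Omega}) \simeq \wedge^{\idot}(I/I^2) = \wedge^{\idot}\NN^{\vee}_{\ba,\Omega}
\]
as graded commutative algebras under the wedge product, and
\[
\Ext^{\idot}_{\ZH_{\bc}}(E_{\ba,\Omega}, E_{\ba,\Omega}) \simeq \wedge^{\idot}\Hom_{E_{\ba,\Omega}}(I/I^2, E_{\ba,\Omega}) = \wedge^{\idot}\NN_{\ba,\Omega}
\]
as a graded module over $\wedge^{\idot}\NN^{\vee}_{\ba,\Omega}$, where the action is induced by the natural pairing (cap product). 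Transporting these identifications back through the Morita equivalence yields the claimed isomorphisms at the level of graded algebras and modules.

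For the Gerstenhaber enhancement one forgets the BV-differential produced by Proposition~\ref{prop:BG}. Geometrically, the symplectic form on $X_{\bc}$ induces $\NN^{\vee}_{\ba,\Omega} \simeq T\Lambda_{\ba,\Omega}$ and $\NN_{\ba,\Omega} \simeq T^*\Lambda_{\ba,\Omega}$, so $\wedge^{\idot}\NN^{\vee}_{\ba,\Omega}$ inherits the Schouten--Nijenhuis bracket on polyvector fields, and $\wedge^{\idot}\NN_{\ba,\Omega}$ inherits its natural Gerstenhaber module structure by contraction. The main obstacle is to verify that the abstract Gerstenhaber structures produced by the construction of \cite{GBVGinzburg} agree with these geometric Schouten ones: the Koszul calculation itself is routine, but tracking how the virtual de Rham brackets restrict to a smooth Lagrangian pair requires care. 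This compatibility is precisely the smooth-Lagrangian case of the general theory of loc.~cit., and once granted both statements of the corollary follow immediately.
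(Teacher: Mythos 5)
Your overall strategy --- reduce by Morita equivalence on the smooth locus to a Koszul computation for $\ZH_{\mbf{c}}/I$ against itself over $\ZH_{\mbf{c}}$, then invoke the Baranovsky--Ginzburg identification of the resulting algebra and module with $\wedge^{\idot}\NN^{\vee}$ and $\wedge^{\idot}\NN$ --- is the same as the paper's, which simply cites Theorem~\ref{thm:BViso} to package both the Koszul calculation and the Gerstenhaber comparison. So the skeleton is fine.

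The genuine gap is in the sentence ``the hypothesis $eM \simeq M'e$ for $M = \Delta(p,\lambda_{\Omega})$ and $M' = (p,\lambda^*_{\Omega})\Delta$ is easily verified.'' It is not, and establishing it constitutes essentially the entire content of the paper's proof. Note that $(p,\lambda_{\Omega}^*)\Delta$ is a right Verma module induced from a \emph{different} $W_p$-representation ($\lambda^*_\Omega$, not $\lambda_\Omega$), and a priori there is no reason $\Delta e$ and $e\Delta$ should cut out the same subscheme of $X_{\mbf{c}}$, nor that the subscheme cut out by the right Verma is smooth. The paper handles this in two steps: (i) a Koszul-duality computation shows $\Ext^n_{\H_{\mbf{c}}}(\Delta(p,\lambda),\H_{\mbf{c}}) \simeq (p,\lambda^*)\Delta$, which forces $\Supp (p,\lambda^*)\Delta \subseteq \Supp\Delta(p,\lambda)$, hence lies in $X_{\mbf{c}}^{\sm}$; and (ii) the right-module analogue of Theorem~\ref{thm:main} together with Corollary~\ref{cor:polyring} then shows $(p,\lambda^*)\Delta e$ is the coordinate ring of a copy of $\mathbb{A}^n$, and the support inclusion upgrades to the needed isomorphism $e\Delta(p,\lambda)\simeq(p,\lambda^*)\Delta e$ of $\ZH_{\mbf{c}}$-modules. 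Without step (i) you do not even know that $\lambda^*_\Omega$ is alone in its block for the ``right'' restricted algebra, so you cannot conclude that $(p,\lambda^*_\Omega)\Delta e$ is cyclic over $\ZH_{\mbf{c}}$. You should supply this argument rather than assert it.

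A minor further point: your closing paragraph acknowledges that matching the Schouten structure with the virtual de Rham Gerstenhaber structure ``requires care'' but then defers entirely to loc.\ cit.\ --- which is what the paper does too via Theorem~\ref{thm:BViso}, so that deferral is acceptable, but it means the explicit Koszul/cap-product description in your middle paragraph is doing no independent work beyond what the cited theorem already delivers.
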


We outline the proof of Corollaries \ref{cor:torext1} and \ref{thm:extiso1} in section \ref{sec:BV}. The appendix contains a summary of the main results of \cite{GBVGinzburg} that are required in the article. The reader can also find the definition of Gerstenhaber algebra there. Corollary \ref{thm:extiso1} implies that the virtual de Rham cohomology $H_{\mathrm{vir}}^{\idot} (\Delta(p,\lambda_{\Omega}),\Delta(p,\lambda_{\Omega}))$ equals the usual de Rham cohomology of the conormal bundle of $\Lambda_{\ba,\Omega}$ in $X_{\mbf{c}}$; this is just $\C$ in degree zero since the space is contractible. Similarly, the virtual de Rham homology $H^{\mathrm{vir}}_{\idot} ((p,\lambda_{\Omega}^*)\Delta ,\Delta(p,\lambda_{\Omega}))$ is, up to a degree shift, also equal to the de Rham cohomology of the conormal bundle. See Corollary \ref{cor:virtual} for details. 

\subsection{}\label{sec:motivate} This paper was motivated by the close relationship, via the limit $t \rightarrow 0$ of Suzuki's functor \cite{VV}, between the rational Cherednik algebra associated to the symmetric group $\s_n$ and modules for the affine Lie algebra $\widehat{\mf{gl}}_{m,\kappa}$ at the critical level $\kappa = - m$. If $\lambda$ is a partition of $n$ with at most $m$ parts then it is easy to see that this functor sends the Weyl module $\mathbb{V}(\lambda)$ to the Verma module $\Delta(\lambda)$.  The endomorphism ring $\End_{\widehat{\mf{gl}}}(\mathbb{V}(\lambda))$ is commutative and can be the identified with the ring of functions on a certain moduli space of $GL_m$-opers on the formal disc. Moreover, the centre of $\widehat{\mf{gl}}_{m,\kappa}$ surjects on to $\End_{\widehat{\mf{gl}}}(\mathbb{V}(\lambda))$, see \cite[Theorem 9.6.1]{FrenkelBook}. This is a perfect analogue of our Theorem \ref{thm:main}. By the result of Frenkel and Teleman \cite{FrenkelTeleman}, this identification, at least when $\lambda = 0$, extends to an identification of $\Ext^{\idot}(\mathbb{V}(0),\mathbb{V}(0))$ with the space of differential forms on the moduli space; a result completely analogous to our Corollary \ref{thm:extiso1}. Their (much more sophisticated) arguments also rely crucially on the fact that the vacuum module $\mathbb{V}(0)$ can be quantized i.e. it exists for all levels $\kappa$. 

\section*{Acknowledgments}

The author is supported by the EPSRC grant EP-H028153.

\section{Rational Cherednik algebras at $t = 0$}\label{sec:RCA}

\subsection{Definitions and notation}\label{subsection:defns}

Let $(W,\h)$ be a complex reflection group, where $\h$ is the reflection representation for $W$, and let $\mathcal{S}(W)$ be the set of all complex reflections in $W$. For each $s \in \mathcal{S}(W)$, choose vectors $\alpha_s \in \mathfrak{h}$ and $\alpha_s^{\vee} \in \mathfrak{h}^*$ that span the one dimensional spaces $\Im (s - 1)|_{\mathfrak{h}}$ and $\Im (s - 1)|_{\mathfrak{h}^*}$ respectively. We normalize $\alpha_s$ and $\alpha_s^{\vee}$ so that $\alpha_s^{\vee}(\alpha_s) = 2$. Let $\mathbf{c} : \mathcal{S}(W) \rightarrow \C$ be a $W$-equivariant function. The \textit{rational Cherednik algebra at $t = 0$}, as introduced by Etingof and Ginzburg \cite{EG}, and denoted $\H_{\mbf{c}}$, is the quotient of the skew group algebra of the tensor algebra $T(\mf{h} \oplus \mf{h}^*) \rtimes W$ by the ideal generated by the relations $[x,x'] = [y,y'] = 0$ and
\beq{eq:rel}
[y,x] = \sum_{s \in \mathcal{S}} \mathbf{c}(s) \frac{x(\alpha_s)\alpha_s^\vee (y)}{\alpha_s^\vee(\alpha_s)} s, 
\eeq
for all $x,x' \in \h^*$ and $y,y' \in \h$. By the PBW property, there is an isomorphism of vector spaces $\H_{\mbf{c}} \simeq \C [\h] \otimes \C W \otimes \C [\h^*]$.

The trivial idempotent in $\C W$ is denoted $e$. Whenever $A$ is an algebra containing $\C W$, $E$ will denote the functor $\Lmod{A} \rightarrow \Lmod{e A e}$ given by multiplication by $e$. 

\subsection{The generalized Calogero-Moser Space}\label{sub:calogerospace}
The centre $\ZH_{\mathbf{c}}$ of $\H_{\mathbf{c}}$ is an affine domain over which $\H_{\mathbf{c}}$ is a finite module. We shall denote by $X_{\mathbf{c}} := \textrm{Spec}\, \ZH_{\mathbf{c}}$ the corresponding affine variety. The space $X_{\mathbf{c}}$ is called the \textit{generalized Calogero-Moser space} associated to the complex reflection group $W$ at parameter $\mathbf{c}$. By \cite[Proposition 4.15]{EG} we have inclusions $\C[\mathfrak{h}]^W \hookrightarrow \ZH_{\mathbf{c}}$ and $\C[\h^*]^W \hookrightarrow \ZH_{\mathbf{c}}$. These inclusions define surjective morphisms $\pi \, : \, X_{\mathbf{c}} \ra \h^*/W$ and $\varpi :  X_{\mathbf{c}} \ra \h/W$ respectively. Both $\pi$ and $\varpi$ are flat of relative dimension $\dim \h$. Write 
$$
\Upsilon \ : \   X_{\mathbf{c}} \longrightarrow \h^* / W \times \h/W
$$
for the product morphism $\Upsilon := \pi \times \varpi$. It is a finite, and hence closed, surjective morphism. By putting $x \in \h^*$ in degree one, $y \in \h$ in degree $-1$ and each $w \in W$ in degree zero, it is clear from the relations (\ref{eq:rel}) that $\H_{\mathbf{c}}$ is a $\Z$-graded algebra. This implies that $\ZH_{\mathbf{c}}$ is also $\Z$-graded. Thus, there exists a canonical $\Cs$-action on $X_{\mathbf{c}}$. The map $\Upsilon$ is $\Cs$-equivariant since $\C [\h]^W \o \C [\h^*]^W$ is a graded subalgebra of $\ZH_{\mathbf{c}}$. Proposition 4.15 of \cite{EG} implies that  

\begin{lem}\label{lem:basicprop}
Let $\pi^{-1}(\ba)$ denote the scheme-theoretic fiber of $\pi$ over $\ba \in \h^*/W$. Then,
\begin{enumerate}

\item The algebra $\ZH_{\mathbf{c}}$ is free of rank $|W|$ over $\C[\h]^W \o \C[\h^*]^W$, hence $X_{\mbf{c}}$ is Cohen-Macaulay. 

\item The algebra $\C[\pi^{-1}(\ba)]$ is a free $\C[\h]^W$-module of rank $|W|$.

\end{enumerate}
\end{lem}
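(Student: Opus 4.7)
The plan is to combine the PBW isomorphism $\H_{\mbf{c}} \simeq \C[\h] \otimes \C W \otimes \C[\h^*]$ with classical Chevalley-Shephard-Todd invariant theory, using as the one genuinely nontrivial input the identification of the associated graded of $\ZH_{\mbf{c}}$ under the standard filtration (the substance of Proposition 4.15 of \cite{EG}).

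For part (1), I would equip $\H_{\mbf{c}}$ with the non-negative filtration in which $\h$ and $\h^*$ have degree $1$ and $W$ has degree $0$. PBW then gives $\gr \H_{\mbf{c}} \simeq \C[\h \oplus \h^*] \rtimes W$, and \cite{EG} provides $\gr \ZH_{\mbf{c}} \simeq \C[\h \oplus \h^*]^W$. The inclusion $\C[\h]^W \otimes \C[\h^*]^W \hookrightarrow \ZH_{\mbf{c}}$ is strictly filtered (in fact graded), so it suffices to prove that $\C[\h \oplus \h^*]^W$ is free of rank $|W|$ over $\C[\h]^W \otimes \C[\h^*]^W$ and then to lift a homogeneous free basis back through the filtration by a routine induction on filtered degree. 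For the graded freeness, Chevalley-Shephard-Todd says $\C[\h]$, resp.\ $\C[\h^*]$, is free of rank $|W|$ over $\C[\h]^W$, resp.\ $\C[\h^*]^W$, carrying the regular representation of $W$; the diagonal $W$-invariants of $\C W \otimes \C W$ have dimension $|W|$ (an easy orbit count, since all diagonal orbits are free), which gives the claimed rank.

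The Cohen-Macaulay statement is then automatic: by Chevalley-Shephard-Todd the tensor product $\C[\h]^W \otimes \C[\h^*]^W$ is a polynomial ring, hence regular; a finitely generated free module over a regular ring is maximal Cohen-Macaulay, so $\ZH_{\mbf{c}}$ is Cohen-Macaulay as a ring and $X_{\mbf{c}}$ as a variety. Part (2) is formal: by definition $\C[\pi^{-1}(\ba)] = \ZH_{\mbf{c}} \otimes_{\C[\h^*]^W} \C[\h^*]^W/\mathfrak{m}_{\ba}$, and tensoring the rank-$|W|$ freeness of (1) over $\C[\h^*]^W$ with the residue field at $\ba$ produces a module that is free of rank $|W|$ over $\C[\h]^W \otimes \C = \C[\h]^W$.

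The main obstacle is hidden in the identification $\gr \ZH_{\mbf{c}} \simeq \C[\h \oplus \h^*]^W$, which requires genuine analysis of the Cherednik relations at $t=0$ and is where the force of Etingof-Ginzburg lies; everything downstream is invariant theory and tensor-product bookkeeping.
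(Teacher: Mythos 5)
Your proposal is correct, and in fact the paper gives no argument at all here: it simply cites Proposition~4.15 of \cite{EG}, whose proof is essentially what you have reconstructed (filter by polynomial degree, pass to the associated graded via PBW, apply Chevalley--Shephard--Todd, lift the graded basis). One small point worth making explicit in your part~(1): the orbit count on $W\times W$ computes the dimension of the fiber of $\C[\h\oplus\h^*]^W$ over the irrelevant ideal of $\C[\h]^W\otimes\C[\h^*]^W$, but on its own this does not yet give freeness. You should first observe that $\C[\h\oplus\h^*]^W$ is a $\C[\h]^W\otimes\C[\h^*]^W$-linear direct summand of the free module $\C[\h\oplus\h^*]$ via the averaging idempotent $\tfrac{1}{|W|}\sum_{w\in W}w$, hence graded projective and therefore graded free over the connected graded polynomial ring $\C[\h]^W\otimes\C[\h^*]^W$; then the fiber-dimension count identifies the rank as $|W|$. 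With that remark inserted, the rest of your argument (the filtered lifting, the Cohen--Macaulay deduction, and the base change to the fiber over $\ba$) is correct and complete.
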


The affine scheme $\pi^{-1}(\ba)$ is neither reduced nor irreducible. The generalized Calogero-Moser space $X_{\mbf{c}}$ has a natural Poisson structure, see \cite{EG}. 

\subsection{$\H_{\mbf{c}}$-modules and fixed points of the $\Cs$-action}

In this section we define Verma modules and recall some basic facts about the representation theory of rational Cherednik algebras. These can be found for instance in \cite{BellSRA}. The stabilizer of $p \in \h$, resp. $q \in \h^*$, under $W$ is denoted $W_p$, resp. $W_q$. 

\begin{defn}\label{defn:modules}
The \tit{Verma module} associated to $p$ and $\lambda \in \Irr (W_p)$ is the induced module 
$$
\Delta (p,\lambda) := \H_{\mbf{c}} \o_{\C[\h^*] \rtimes W_p} \lambda,
$$
where the action of $\C[\h^*]$ on $\lambda$ is via evaluation at $p$. 
\end{defn}

We write $\Delta (\lambda)$ for $\Delta (0,\lambda)$. Let $\mf{m}_{\bb} \lhd \C[\h]^W$ the maximal ideal corresponding to $\bb \in \h / W$. The \tit{baby Verma module} associated to $p$, $\lambda$ and $\bb$ is defined to be
$$
\Delta (p,\lambda,\bb) := \Delta (p,\lambda) / \mf{m}_{\bb} \cdot \Delta (p,\lambda).
$$ 
The module $\Delta(0,\lambda,\mbf{0})$ is the baby Verma module studied in \cite{Baby}. If $L$ is a simple $\H_{\mbf{c}}$-module then $\dim L \le |W|$, with equality if and only if the support of $L$, a closed point of $X_{\mbf{c}}$, is contained in the smooth locus. As noted above, the map $\Upsilon$ is $\Cs$-equivariant. Since the image of $0$ in $\h^* / W \times \h / W$ is the unique $\Cs$-fixed point of that space, the finitely many closed point of $\Upsilon^{-1}(0)$ are the precisely the $\Cs$-fixed points in $X_{\mbf{c}}$. The simple $\H_{\mbf{c}}$-modules supported at each of these fixed points is a graded $\H_{\mbf{c}}$-module. These simple graded modules are naturally parameterized by the set $\Irr (W)$. Namely, for each $\lambda \in \Irr (W)$, the module $\Delta(0,\lambda,\mbf{0})$ has a simple head $L(\lambda)$. The simple modules $L(\lambda)$ can be graded, $L(\lambda) \not\simeq L(\mu)$ for $\lambda \not\simeq \mu$, and they exhaust all simple, gradable $\H_{\mbf{c}}$-modules. If we stipulate that $L(\lambda)_0 = \lambda$ and $L(\lambda)_i = 0$ for all $i < 0$, then this fixes the grading on $L(\lambda)$.  

Our conventions about graded modules will be that $M[n]_i = M_{i - n}$. The coinvariant ring $\C[\h]^{co W} = \C[\h] / \langle \C[\h]^W_+ \rangle$ is a graded copy of the regular representation, considered as a $W$-module. The fake polynomial associated to $\lambda \in \Irr (W)$ is 
$$
f_{\lambda}(t) = \sum_{i \in \Z} [\C[\h]^{co W}_i : \lambda] t^i, 
$$
a polynomials in $\mathbb{N}[t]$. Define $b_{\lambda}$ to be the degree of the \textit{lowest} non-zero monomial. It is called the \textit{$b$-invariant} of $\lambda$. 

\section{Graded modules}\label{sec:CM}

Define $\H_{\bc}^+  = \H_{\bc} / \langle \C[\h^*]_+^W \rangle$ and, for each $\bb \in \h / W$, let $\H^{\bb}_{\bc}$ denote the quotient of $\H_{\bc}^+$ by the ideal generated by the maximal ideal defining $\bb$ in $\h / W$. Lemma \ref{lem:basicprop} implies that $\dim \H^{\bb}_{\bc} = |W|^3$. When $\bb = \mbf{0}$, the algebra $\H^{\mbf{0}}_{\bc}$ is the restricted rational Cherednik algebra and the modules $L(\lambda)$ defined previously are $\H^{\mbf{0}}_{\bc}$-modules. The blocks of this algebra define a partition, the \textit{Calogero-Moser partition} of $\Irr (W)$; $\Irr (W) = \bigcup_{i = 1}^k \Omega_i$, where $L(\lambda)$ and $L(\mu)$ belong to the same block of $\H^{\mbf{0}}_{\bc}$ if and only if there is some $\Omega_i$ containing both $\lambda$ and $\mu$. We denote the set of all $\Omega_i$ by $\CM_{\mbf{c}}(W)$. There is a natural bijection between $\CM_{\mbf{c}}(W)$ and the closed points of $\Upsilon^{-1}(0,0)$. 

\begin{lem}\label{lem:edecomp}
Let $\H^{\bb}_{\mbf{c}} = \bigoplus_{i = 1}^{\ell} B_i$ be the block decomposition of $\H^{\bb}_{\mbf{c}}$
\begin{enumerate}
\item For each $i$, there exists a unique simple module $L_i \in \Lmod{B_i}$ such that $e L_i \neq 0$. 
\item We have $\dim e L_i = 1$.
\item Let $P_i$ be the projective cover of $L_i$. Then, $\End_{\H^{\bb}_{\mbf{c}}} \left( \bigoplus_{i = 1}^{\ell} P_i \right) \simeq  e \H_{\mbf{c}}^{\bb} e$ and the functor $e : \Lmod{\H_{\mbf{c}}^{\bb}} \rightarrow \Lmod{e \H_{\mbf{c}}^{\bb}e}$ equals $\Hom_{\H^{\bb}_{\mbf{c}}} \left( \bigoplus_{i = 1}^{\ell} P_i , - \right)$.  
\end{enumerate}
\end{lem}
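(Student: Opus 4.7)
The plan is to deduce all three parts from the Satake-type identification $Z \rightsim e\H_{\mbf{c}} e$ of Etingof--Ginzburg \cite{EG}, writing $Z := \ZH_{\mbf{c}}$ and $A := \H^{\bb}_{\mbf{c}}$. Setting $J := \langle \C[\h^*]^W_+,\, \mf{m}_\bb\rangle \lhd Z$, this descends to $Z/J \rightsim eAe$, so by Lemma \ref{lem:basicprop}(1), $eAe$ is a commutative $\C$-algebra of dimension $|W|$ whose maximal spectrum is $\Upsilon^{-1}(0,\bb)$. I would then follow the template that identifies $\CM_{\mbf{c}}(W)$ with the closed points of $\Upsilon^{-1}(0,0)$ to put the blocks of $A$ in bijection with the primitive idempotents of $eAe$; this gives $eAe = \bigoplus_{i=1}^{\ell} eB_ie$, in which each $eB_ie$ is a local commutative Artinian $\C$-algebra with residue field $\C$.

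For parts (1) and (2), I would invoke the general idempotent-truncation principle: for any finite-dimensional algebra $R$ with idempotent $\varepsilon$, the functor $\varepsilon\cdot$ induces a bijection $L \mapsto \varepsilon L$ between isomorphism classes of simple $R$-modules with $\varepsilon L \ne 0$ and those of simple $\varepsilon R \varepsilon$-modules. Applying this block-by-block with $R = B_i$, and noting that the local commutative ring $eB_ie$ has a unique simple module, of dimension one, there is a unique $L_i \in \Lmod{B_i}$ with $eL_i \ne 0$ and $\dim eL_i = 1$.

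For (3), I would analyze the projective module $Ae$. The standard natural isomorphism $\Hom_A(Ae, M) \simeq eM$ shows that the indecomposable summands of $Ae$ are precisely the projective covers of simples $L$ with $eL \ne 0$, each appearing with multiplicity $\dim eL$; by (1)--(2) this forces $Ae \simeq \bigoplus_i P_i$. Hence $\End_A(\bigoplus_i P_i) \simeq \End_A(Ae) \simeq (eAe)^{\op} = eAe$, using the commutativity established in the first paragraph; moreover, $\Hom_A(\bigoplus_i P_i,-) = \Hom_A(Ae,-) = e\cdot$ as functors to $\Lmod{eAe}$.

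The main obstacle, and the only step where the specific structure of the rational Cherednik algebra intervenes beyond the PBW count, is the bijection between blocks of $A$ and primitive idempotents of $eAe$ claimed in the first paragraph, equivalently that every block of $A$ supports a simple not annihilated by $e$. This rests on $A$ being a finite module over its center together with the Cohen--Macaulay, rank-$|W|$ freeness of $Z$ over $\C[\h]^W \otimes \C[\h^*]^W$ asserted in Lemma \ref{lem:basicprop}; once granted, (1)--(3) follow by formal representation-theoretic manipulation.
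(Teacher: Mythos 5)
Your proof is correct, and it reaches the same conclusions by a genuinely different path through the argument. The paper obtains existence of a simple $L_i$ in each block with $\dim eL_i = 1$ by quoting M\"uller's theorem \cite{Ramifications} together with \cite[Lemma 4.4.1]{smoothsra}, and then establishes \emph{uniqueness} by a separate analysis of the quotient functor $E$ and its left adjoint ${}^\perp E$, comparing composition multiplicities in ${}^\perp E(L)$. You instead push M\"uller's theorem further at the outset: once the blocks of $\H^{\bb}_{\mbf{c}}$ are in bijection with the closed points of $\Upsilon^{-1}(0,\bb)$, each $eB_ie$ is forced to be a local commutative Artinian ring with residue field $\C$, and then both uniqueness and the dimension count in parts (1)--(2) fall out formally from the idempotent-truncation bijection. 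For part (3), the paper invokes Watt's theorem to represent the exact functor $E$ by a projective and then identifies it; you instead compute the indecomposable summands of $Ae$ directly from $\Hom_A(Ae,-)\simeq e(-)$ and the multiplicities $\dim eL$, concluding $Ae\simeq\bigoplus_i P_i$ and hence $\End_A(\bigoplus_i P_i)\simeq(eAe)^{\op}=eAe$. Both routes lean on the same non-formal input — the M\"uller-style block-to-point bijection, which you correctly identify as the one place where the Cherednik-algebra structure (finiteness over the centre and Lemma \ref{lem:basicprop}) actually enters — but your version packages the consequences more uniformly, replacing both the quotient-functor uniqueness argument and the Watt's-theorem step with elementary idempotent bookkeeping. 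The only thing I would tighten is to cite M\"uller's theorem explicitly for the block bijection rather than gesturing at the "template", since that is precisely the ingredient that makes the rest of the argument purely formal.
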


\begin{proof}
By M\"uller's Theorem \cite{Ramifications}, the blocks of $\H^{\bb}_{\mbf{c}}$ are in bijection with the closed points of $\Upsilon_{\mbf{c}}(\bb,0)$. Therefore, by \cite[Lemma 4.4.1]{smoothsra}, there exists at least one simple module $L \in \Lmod{B_i}$ such that $\dim e L = 1$. The functor $E : \Lmod{\H^{\bb}_{\mbf{c}}} \rightarrow \Lmod{e \H_{\mbf{c}}^{\bb} e}$, $E(M) = e M$, is a quotient functor with left adjoint ${}^{\perp} E$ given by ${}^{\perp} E(N) = \H_{\mbf{c}}^{\bb} e \o_{e \H_{\mbf{c}}^{\bb} e} N$. In particular, the adjunction $\mathrm{id} \rightarrow E \circ  {}^{\perp} E$ is an isomorphism. If $L'$ is another simple $B_i$-module with $e L \simeq e L'$ as $e \H_{\mbf{c}}^{\bb} e$-modules, then ${}^{\perp} E(L) \simeq {}^{\perp} E(L')$. This implies that both $L$ and $L'$ occur as composition factors of ${}^{\perp} E(L)$. But then
$$
1 = \dim e ({}^{\perp} E(L)) \ge [{}^{\perp} E(L) : L] + [{}^{\perp} E(L) : L'].
$$
This implies that $L \simeq L'$, confirming (1). 

Since $E$ is a quotient functor, it sends simple modules to simple modules. Therefore, since $e \H^{\bb}_{\mbf{c}} e$ is a commutative ring, $\dim e M$ is at most one-dimensional for any simple module $M$. This proves (2).  

Since $E$  is exact, Watt's Theorem says that there exists a projective $\H^{\bb}_{\mbf{c}}$-module $P$ such that $E \simeq \Hom_{\H^{\bb}_{\mbf{c}}}(P, - )$. The fact that $E(L_i) = e L_i$ and $E(L) = 0$ for all other simple $B_i$-modules implies that $P = \bigoplus_{i = 1}^{\ell} P_i$, proving (3).  
\end{proof}

\subsection{} It has been shown in \cite[Theorem 9.6.1]{BonnafeRouquier} that each $\Omega \in \CM_{\mbf{c}}(W)$ contains a unique representation with minimal $b$-invariant. This \textit{distinguished element} in $\Omega$ is denoted $\lambda_{\Omega}$. The following is Theorem 9.6.1 of \textit{loc. cit.} 

\begin{thm}\label{prop:Vfunctor}
Let $\Omega \in \CM_{\mbf{c}}(W)$. 
\begin{enumerate}
\item $\lambda_{\Omega}$ is the unique element in $\Omega$ such that $e L(\lambda_{\Omega}) \neq 0$. 
\item $\dim e L(\lambda_{\Omega}) = 1$. 
\item Let $B_{\Omega}$ be the block of $\H^{\mbf{0}}_{\mbf{c}}$ corresponding to $\Omega$. Then,  $\End_{\H^{\mbf{0}}_{\mbf{c}}}(P(\lambda_{\Omega})) = e B_{\Omega} e$. 
\item The centre of $\H^{\mbf{0}}_{\mbf{c}}$ surjects onto $\End_{\H^{\mbf{0}}_{\mbf{c}}}(\Delta(0,\lambda_{\Omega},\bo))$. 

\end{enumerate}
\end{thm}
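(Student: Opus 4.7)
The plan bootstraps off Lemma \ref{lem:edecomp} for parts (1)--(3), and uses the Satake-type structure of $\H_\bc$ at $t = 0$ for part (4). Applying Lemma \ref{lem:edecomp} with $\bb = \bo$ yields immediately that each block $B_\Omega$ of $\H^{\bo}_\bc$ contains a unique simple module $L_\Omega$ with $eL_\Omega \neq 0$, and that $\dim eL_\Omega = 1$. Once the identification $L_\Omega \simeq L(\lambda_\Omega)$ is established, parts (1) and (2) follow, and (3) is then a formal consequence of Lemma \ref{lem:edecomp}(3), since the projectives $P(\lambda_\Omega)$ lie in pairwise distinct blocks and $\End_{\H^{\bo}_\bc}\bigl(\bigoplus_\Omega P(\lambda_\Omega)\bigr)$ decomposes block-by-block, matching $e\H^{\bo}_\bc e = \bigoplus_\Omega eB_\Omega e$.

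To carry out the identification in (1), I would exploit the graded structure. As graded $W$-modules, $\Delta(0,\lambda,\bo) \simeq \C[\h]^{coW} \o \lambda$, and so
\[
e\Delta(0,\lambda,\bo) \;\simeq\; \Hom_W(\lambda^*, \C[\h]^{coW})
\]
has graded character the fake polynomial $f_{\lambda^*}(t)$ and minimal nonzero degree $b_{\lambda^*}$. Because $L(\lambda)$ is the graded simple head of $\Delta(0,\lambda,\bo)$ with $L(\lambda)_0 = \lambda$, the $e$-component $eL(\lambda)$, whenever nonzero, is generated in this same minimal degree. Combining this degree bound with graded contragredient duality (which relates $L(\lambda)$ and $L(\lambda^*)$ up to a shift), one shows that within each CM block the $\lambda$ with extremal $b$-invariant coincides with the unique $L_\Omega$ of Lemma \ref{lem:edecomp}, forcing $L_\Omega = L(\lambda_\Omega)$. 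This matching is the main obstacle: verifying that the degree bound and duality symmetry single out one element of $\Omega$ is where Bonnaf\'e--Rouquier's combinatorial input enters.

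For part (4), the natural map $Z(\H^{\bo}_\bc) \to \End_{\H^{\bo}_\bc}(\Delta(0,\lambda_\Omega,\bo))$ factors through the block projection $Z(\H^{\bo}_\bc) \twoheadrightarrow Z(B_\Omega)$. The Satake-type isomorphism $Z(\H_\bc) \simeq e\H_\bc e$ available at $t = 0$ descends to the restricted algebra and respects blocks, so, using (3), $Z(B_\Omega)$ surjects onto $eB_\Omega e \simeq \End(P(\lambda_\Omega))$. The projective cover surjection $P(\lambda_\Omega) \twoheadrightarrow \Delta(0,\lambda_\Omega,\bo)$ then induces a map $\End(P(\lambda_\Omega)) \to \End(\Delta(0,\lambda_\Omega,\bo))$, and surjectivity follows from the fact that $\Delta(0,\lambda_\Omega,\bo)$ is generated over its endomorphism ring by the canonical image of $1 \o \lambda_\Omega$, so every endomorphism factors through the commuting $eB_\Omega e$-action.
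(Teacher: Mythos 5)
The paper does not supply a proof of this theorem at all: the statement is introduced with ``The following is Theorem 9.6.1 of \textit{loc. cit.},'' i.e.\ the entire content is cited verbatim from Bonnaf\'e--Rouquier \cite{BonnafeRouquier}. So there is no ``paper's proof'' against which to compare. Your proposal is therefore an attempt to reconstruct a proof that the author deliberately omitted by deferring to a reference.

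As far as it goes, your outline uses exactly the right preliminary machinery: applying Lemma~\ref{lem:edecomp} with $\bb = \bo$ yields (1), (2), (3) once one knows that the unique simple $L_{\Omega}$ in each block with $eL_{\Omega}\neq 0$ is $L(\lambda_{\Omega})$ for $\lambda_{\Omega}$ the minimal-$b$-invariant element. But that identification is precisely the substance of the theorem, and your argument there is only a sketch. The observation that $e\Delta(0,\lambda,\bo)$ has graded character $f_{\lambda^*}(q)$ with lowest degree $b_{\lambda^*}$ does not by itself tell you whether $eL(\lambda)\neq 0$: the low-degree part of $e\Delta(0,\lambda,\bo)$ may sit in a composition factor $L(\mu)[n]$ with $\mu\neq\lambda$, so the minimality of $b_{\lambda^*}$ over $\Omega$ has to be played off against the graded filtration of \emph{every} baby Verma in the block, not just the one labelled by $\lambda$. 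You explicitly say that this is ``where Bonnaf\'e--Rouquier's combinatorial input enters,'' which is honest but means the core step is being asserted rather than proved. In short, your argument reduces the theorem to exactly the point where the paper itself defers to \cite{BonnafeRouquier}, so it cannot stand as a self-contained proof.

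One further imprecision, in your argument for (4): the surjection $\pi:P(\lambda_{\Omega})\twoheadrightarrow\Delta(0,\lambda_{\Omega},\bo)$ does not by itself induce a map $\End(P(\lambda_{\Omega}))\to\End(\Delta(0,\lambda_{\Omega},\bo))$, since a general endomorphism of $P$ need not preserve $\ker\pi$. The correct route is to compare both endomorphism rings inside $\Hom(P(\lambda_{\Omega}),\Delta(0,\lambda_{\Omega},\bo))$: the map $\End(\Delta)\hookrightarrow\Hom(P,\Delta)$, $f\mapsto f\circ\pi$, is injective, the map $\End(P)\to\Hom(P,\Delta)$, $g\mapsto\pi\circ g$, is surjective by projectivity, and for central $z$ both compositions agree. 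Combined with the surjectivity of $Z(\H^{\bo}_{\mbf{c}})\to eB_{\Omega}e\simeq\End(P(\lambda_{\Omega}))$ (which follows from $Z(\H_{\mbf{c}})\rightsim e\H_{\mbf{c}}e$ passing to the quotient $e\H^{\bo}_{\mbf{c}}e$ and factoring through $Z(\H^{\bo}_{\mbf{c}})$), this forces $\End(\Delta)\rightsim\Hom(P,\Delta)$ and gives the desired surjection. Your phrasing glosses over exactly this comparison step.
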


\begin{remark}
The functor $E : \Lmod{\H^{\mbf{0}}_{\mbf{c}}} \rightarrow \Lmod{e\H^{\mbf{0}}_{\mbf{c}}e}$ is isomorphic to $\Hom_{\H^{\mbf{0}}_{\mbf{c}}}(\mathsf{P}_{\mbf{c}}, - )$, where $\mathsf{P}_{\mbf{c}} = \bigoplus_{\Omega \in \CM_{\mbf{c}}(W)} P(\lambda_{\Omega})$. Thus, it is strongly reminiscent of Soergel's $\mathbb{V}$-functor. Also, by choosing a linear character of the group $W$ other that the trivial representation, the analogue of Theorem \ref{prop:Vfunctor} still holds, but the distinguished element in each block will be different.  
\end{remark}

\subsection{} 

Let $\grmod{\H^+_{\mbf{c}}}$ denote the category of finitely generated, $\Z$-graded $\H^+_{\mbf{c}}$-modules, and similarly for $\grmod{\H^{\mbf{0}}_{\mbf{c}}}$, where in both cases homomorphisms are homogeneous of degree zero. Let $J_{\Z}(\H^{+}_{\mbf{c}})$ denote the graded Jacobson radical of $\H^{+}_{\mbf{c}}$. Since $\C[\h]^W_+$ is contained in $J_{\Z}(\H^{+}_{\mbf{c}})$, the inclusion $\grmod{\H^{\mbf{0}}_{\mbf{c}}} \hookrightarrow \grmod{\H^+_{\mbf{c}}}$ induces a bijection between isomorphism classes of simple graded modules. For each $\bb \in \h/W$, let $( - )^{\bb} : \Lmod{\H^+_{\bc}} \rightarrow \Lmod{\H^{\bb}_{\bc}}$ be the functor $M \mapsto M^{\bb} := \H_{\bc}^{\bb} \o_{\H^+_{\bc}} M$. To relate representations of $\H^+_{\bc}$ to representations of $\H^{\mbf{0}}_{\mbf{c}}$, let $\promod{\H^+_{\mbf{c}}}$, resp. $\pgrmod{\H^+_{\mbf{c}}}$, denote the category of projective $\H^+_{\mbf{c}}$-modules, resp. graded, projective $\H^+_{\mbf{c}}$-modules. These are exact categories. Then we have the following result, which is presumably  standard.  

\begin{lem}\label{claim:idempotentlifting} 
\begin{enumerate}
\item The abelian category $\grmod{\H^+_{\mbf{c}}}$ is Krull-Schmit. 
\item The simple module $L(\lambda)$ admits a projective cover $P^+(\lambda)$ in $\grmod{\H^+_{\mbf{c}}}$. 
\item The $P^+(\lambda)[m]$ for $m \in \Z$ are all indecomposable projectives in $\pgrmod{\H^+_{\mbf{c}}}$.
\item The functor $( - )^{\bb} : \promod{\H^+_{\mbf{c}}} \rightarrow \promod{\H^{\bb}_{\mbf{c}}(W)}$ is exact. 
\item The functor $( - )^{\bo} : \pgrmod{\H^+_{\mbf{c}}} \rightarrow \pgrmod{\H^{\mbf{0}}_{\mbf{c}}}$ is exact, essential surjective and induces a bijection between the blocks of $\H^+_{\mbf{c}}$ and the blocks of $\H^{\mbf{0}}_{\mbf{c}}$. 
\end{enumerate} 
\end{lem}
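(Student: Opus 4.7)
The plan is to exploit that $\H^+_{\bc}$ is a $\Z$-graded Noetherian algebra, finite as a module over the graded-local central subring $\C[\h]^W$, whose augmentation $\C[\h]^W_+$ sits in strictly positive degrees and lies in $J_{\Z}(\H^+_{\bc})$, and whose quotient $\H^{\bo}_{\bc} = \H^+_{\bc}/\langle \C[\h]^W_+ \rangle$ is finite dimensional. All five assertions reduce to transporting the Krull--Schmidt and block theory of the finite-dimensional quotient back up through a graded Hensel/Nakayama argument.

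For (1) I observe that for $M \in \grmod{\H^+_{\bc}}$ the graded endomorphism algebra $\End_{\grmod{\H^+_{\bc}}}(M)$ is a finite $\C[\h]^W$-module that reduces modulo $\C[\h]^W_+$ to $\End_{\grmod{\H^{\bo}_{\bc}}}(M^{\bo})$. Graded Nakayama then lifts an indecomposable decomposition of $M^{\bo}$ to one of $M$, and when $M$ is indecomposable its graded endomorphism ring is graded-local by the finite-dimensional case. For (2)--(3), a primitive degree-zero idempotent $\bar f_{\lambda} \in \H^{\bo}_{\bc}$ with $\H^{\bo}_{\bc} \bar f_{\lambda}$ the projective cover of $L(\lambda)$ may be lifted to a degree-zero idempotent $f_{\lambda} \in \H^+_{\bc}$; then $P^+(\lambda) := \H^+_{\bc} f_{\lambda}$ is the desired graded projective cover of $L(\lambda)$ over $\H^+_{\bc}$. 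The lift is produced by the standard graded Hensel/Newton iteration, and works because in each $\Z$-degree the filtration $\H^+_{\bc} \supseteq \langle \C[\h]^W_+ \rangle \supseteq \langle \C[\h]^W_+ \rangle^2 \supseteq \cdots$ terminates: $\H^+_{\bc}$ is bounded below and locally finite-dimensional as a $\Z$-graded vector space (indeed $\H^+_{\bc} \cong \C[\h] \otimes \C W \otimes \C[\h^*]^{co W}$ as graded vector spaces), so Krull's intersection theorem applied to the f.g.\ $\C[\h]^W$-module $\H^+_{\bc}$ forces the degree-zero slice of $\bigcap_k \langle \C[\h]^W_+ \rangle^k$ to vanish, and the finite-dimensionality of the degree-zero component makes the vanishing kick in after finitely many iterations. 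Krull--Schmidt then pins down the graded indecomposable projectives as $\{P^+(\lambda)[m] : \lambda \in \Irr(W), m \in \Z\}$, giving (3).

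Statements (4) and (5) then follow without much extra work. Exactness in both is automatic on the exact categories of projectives: every short exact sequence of projective modules splits, and additive functors preserve split-exactness. Essential surjectivity of $(-)^{\bo}$ on graded projectives is immediate from (2)--(3), since $P^+(\lambda)^{\bo}$ is the graded projective cover of $L(\lambda)$ over $\H^{\bo}_{\bc}$. For the block bijection, the same Hensel lift applied inside the central subalgebra $Z(\H^+_{\bc})$ gives a lift of primitive central idempotents: any central idempotent lying in $\langle \C[\h]^W_+ \rangle \subseteq J_{\Z}(\H^+_{\bc})$ is zero (idempotents in the graded Jacobson radical vanish, since $1-e$ is a unit), so the lift of a primitive central idempotent of $\H^{\bo}_{\bc}$ is automatically primitive, and conversely any refinement of the reduction of a primitive central idempotent of $\H^+_{\bc}$ would lift to a refinement upstairs.

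The main obstacle is the idempotent lift in (2)--(3): the kernel $\langle \C[\h]^W_+ \rangle$ is not nilpotent, so the Newton iteration a priori only converges in the $\langle \C[\h]^W_+ \rangle$-adic completion. The bounded-below, locally finite-dimensional $\Z$-graded structure is exactly what rescues us, forcing the iteration to terminate inside $\H^+_{\bc}$ after finitely many steps; once this is in hand the remaining five statements are routine transport through the quotient functor $(-)^{\bo}$.
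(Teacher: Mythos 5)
Your argument for part (1) has a real gap. You claim that $\End_{\grmod{\H^+_{\mbf{c}}}}(M)$ is a finite $\C[\h]^W$-module that reduces modulo $\C[\h]^W_+$ to $\End_{\grmod{\H^{\mbf{0}}_{\mbf{c}}}}(M^{\bo})$. Neither claim is right as stated: $\End_{\grmod{\H^+_{\mbf{c}}}}(M)$ is the degree-zero piece of the $\Z$-graded hom, and multiplication by a positive-degree element of $\C[\h]^W$ shifts degree, so this space carries no natural $\C[\h]^W$-module structure (only the full ungraded $\Hom_{\H^+_{\bc}}(M,M)$ is finite over $\C[\h]^W$). Nor does the natural map $\End_{\H^+_{\bc}}(M)\otimes_{\C[\h]^W}\C_0 \to \End_{\H^{\bo}_{\bc}}(M^{\bo})$ identify the two sides for general $M$ — this kind of base-change statement needs $M$ projective, which in the proof of (1) you cannot assume. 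And ``graded Nakayama lifts an indecomposable decomposition of $M^{\bo}$ to one of $M$'' is not how Nakayama works (lifting a decomposition is an idempotent-lifting statement in $\End(M)$, which again requires control of that ring). The paper avoids all of this by verifying Fitting's Lemma directly in $\grmod{\H^+_{\mbf{c}}}$: since $M$ is generated in degrees $<N$ and each weight space $M_i$ is a finite module over the finite-dimensional degree-zero piece $(\H^+_{\bc})_0$, the chains $\Im(f^n)$ and $\Ker(f^n)$ stabilize, giving Krull--Schmidt.

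For parts (2)--(3) your route is correct but different. The paper observes that the short exact sequence $0\to\langle \C[\h]^W_+\rangle\to\H^+_{\bc}\to\H^{\bo}_{\bc}\to 0$ restricts to degree zero, where the kernel lies in the Jacobson radical of the finite-dimensional algebra $(\H^+_{\bc})_0$; idempotent lifting is then the textbook semiperfect-ring statement. You instead run Newton iteration directly in $\H^+_{\bc}$ and appeal to Krull's intersection theorem together with finite-dimensionality of the degree-zero slice to force termination; this works and is a fair trade (more machinery, less bookkeeping). Part (4) you treat by split-exactness, which handles exactness in the exact categories, but you omit the verification that $(-)^{\bb}$ lands in $\promod{\H^{\bb}_{\bc}}$; the paper gets this via the adjunction $\Hom_{\H^{\bb}_{\bc}}(P^{\bb},M)=\Hom_{\H^+_{\bc}}(P,M)$.

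For the block bijection in (5), your plan is to lift primitive central idempotents inside $Z(\H^+_{\bc})$. This requires knowing that $Z(\H^+_{\bc})\to Z(\H^{\bo}_{\bc})$ hits (enough) central idempotents, which is not immediate: lifting an idempotent mod an ideal does not automatically produce a \emph{central} lift, and it is not clear from the structure of $\H^+_{\bc}$ that the center surjects in degree zero. The paper sidesteps this by characterizing blocks through the linking graph of indecomposable projectives and checking $\Hom_{\H^+_{\bc}}(P^+(\lambda),P^+(\mu))\neq 0$ iff $\Hom_{\H^{\bo}_{\bc}}(P(\lambda),P(\mu))\neq 0$ via graded Nakayama applied to the $\C[\h]^W$-free module $\Hom_{\H^+_{\bc}}(P^+(\lambda),P^+(\mu))$; this is the more robust route and you should adopt it, or else supply an argument that central idempotents of $\H^{\bo}_{\bc}$ do lift centrally.
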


\begin{proof}
Since we were unable to find a suitable reference, we sketch the proof. As for the usual proof, e.g. \cite[Section 1.4]{Bensonfd}, of the Krull-Schmit property for Artinian rings, it suffices to show that the analogue of Fitting's Lemma holds in this setting. That is, given an  indecomposable module $M \in \grmod{\H^+_{\mbf{c}}}$ and $f \in \End_{\grmod{\H^+_{\mbf{c}}}}(M)$, we need to show that $M = \Ker (f^n) \oplus \mathrm{Im} (f^n)$ for some $n \gg 0$. Reading the usual proof of Fitting's Lemma e.g. Lemma 1.4.4 of \textit{loc. cit.}, we just need to show that there is some $n \gg 0$ such that $\Ker (f^{n+k}) = \Ker (f^n)$ and $\mathrm{Im}(f^{n+k}) = \mathrm{Im}(f^n)$ for all $k$. Let $A$ be the degree zero part of $\H_{\mbf{c}}^+$, a finite dimensional algebra. Since $M$ is finitely generated, there exists $N \gg 0$ such that $M = \H_{\mbf{c}}^+ \cdot M_{< N}$, and each weight space $M_i$ is a finite $A$-module. Therefore there is some $n \gg 0$ such that $\mathrm{Im}(f^{n+k}_{< N}) = \mathrm{Im}(f^n_{< N})$, where $f_{< N}$ is $f$ restricted to $M_{< N}$. This implies that $\mathrm{Im}(f^{n+k}) = \mathrm{Im}(f^n)$. To show that $\Ker (f^{n+k}) = \Ker (f^n)$ too, consider the short exact sequences $0 \rightarrow \Ker (f^{n}_i) \rightarrow M_i \rightarrow \mathrm{Im}(f^n_i) \rightarrow 0$ and $0 \rightarrow \Ker (f^{n+k}_i) \rightarrow M_i \rightarrow \mathrm{Im}(f^{n+k}_i) \rightarrow 0$ of $A$-modules. We certainly have $\Ker (f^{n+k}_i) \subset \Ker (f^{n}_i)$ and, by assumption, $\mathrm{Im}(f^n_i) = \mathrm{Im}(f^{n+k}_i)$. Since all modules have finite length, this implies that $\Ker (f^{n+k}_i) = \Ker (f^{n}_i)$ and hence $\Ker (f^{n+k}) = \Ker (f^{n})$. This proves part (1). 
 
Let $J_{\Z}(\H^+_{\bc})$ denoted the graded Jacobson radical of $\H^+_{\bc}$. The ideal generated by $\C[\h^*]_+^W$ is contained in $J_{\Z}(\H^+_{\bc})$. Therefore, the short exact sequence $0 \rightarrow \< \C[\h^*]_+^W \> \rightarrow \H^+_{\bc} \rightarrow \H^{\mbf{0}}_{\mbf{c}} \rightarrow 0$ restricts to $0 \rightarrow \< \C[\h^*]_+^W \> \cap (\H^+_{\bc})_0 \rightarrow (\H^+_{\bc})_0 \rightarrow (\H^{\mbf{0}}_{\mbf{c}})_0 \rightarrow 0$. Moreover, $ \< \C[\h^*]_+^W \> \cap (\H^+_{\bc})_0$ is contained in the Jacobson radical of $(\H^+_{\bc})_0$. Therefore every idempotent (automatically homogeneous of degree zero) in $\H^{\mbf{0}}_{\mbf{c}}$ lifts to an idempotent in $\H^+_{\bc}$. Let $a \in \H^{\mbf{0}}_{\mbf{c}}$ be a primitive idempotent, and $n(\lambda) \in \Z$, such that $(\H^{\mbf{0}}_{\mbf{c}} a) [n(\lambda)]$ is the projective cover of $L(\lambda)$ in $\grmod{\H^{\mbf{0}}_{\mbf{c}}}$. We denote by the same letter, the lift of $a$ to $\H^+_{\bc}$. Then the claim is that $(\H^{+}_{\mbf{c}} a) [n(\lambda)]$ is the projective cover of $L(\lambda)$ in $\grmod{\H^+_{\mbf{c}}}$. It is certainly a graded projective module mapping surjectively onto $L(\lambda)$. If it is not the projective cover, then it is decomposable. Each summand is free over $\C[\h]^W$, hence defines, after applying $( - )^{\bo}$, a non-zero direct summand of $(\H^{\mbf{0}}_{\mbf{c}} a) [n(\lambda)]$. But this contradicts the indecomposability of the latter module. Parts (2) and (3) follow.  

Since each $P \in \promod{\H^+_{\mbf{c}}}$ is a direct summand of a direct sum of copies of $\H_{\mbf{c}}^+$, it is a free $\C[\h]^W$-module. This implies that $( - )^{\bb}$ is exact. The fact that it maps projectives to projective is just the adjunction $\Hom_{\H^{\bb}_{\bc}}(P^{\bb},M) = \Hom_{\H^+_{\bc}}(P,M)$. This proves part (3). 

Parts (2) and (3) imply the first part of (4). For the final statement about blocks, we note by part (3) that it suffices to show that $\Hom_{\H^+_{\mbf{c}}}(P^+(\lambda),P^+(\mu)) \neq 0$ if and only if $\Hom_{\H^{\bo}_{\mbf{c}}}(P(\lambda),P(\mu)) \neq 0$. Since each $P^+(\lambda)$ is a direct summand of $\H_{\mbf{c}}^+$, it is a graded free $\C[\h]^W$-module. After fixing a grading on $P^+(\lambda)$ and $P^+(\mu)$, this makes $\Hom_{\H^+_{\mbf{c}}}(P^+(\lambda),P^+(\mu))$ into a graded $\C[\h]^W$-module. Let $\C_0 = \C[\h]^W / \C[\h]^W_+$. Then, the sequence 
$$
0 \rightarrow \C[\h]^W_+ \cdot P^+(\mu) \rightarrow P^+(\mu) \rightarrow \C_0 \o_{\C[\h]^W} P^+(\mu) = P(\mu) \rightarrow 0
$$
is exact, and apply $\Hom_{\H^+_{\mbf{c}}}(P^+(\lambda), - )$, we deduce that the space $\Hom_{\H^{\bo}_{\mbf{c}}}(P(\lambda),P(\mu))$ equals $\C_0 \o_{\C[\h]^W} \Hom_{\H^+_{\mbf{c}}}(P^+(\lambda),P^+(\mu))$. By the graded Nakyama lemma, this implies that $\Hom_{\H^+_{\mbf{c}}}(P^+(\lambda),P^+(\mu)) \neq 0$ if and only if $\Hom_{\H^{\bo}_{\mbf{c}}}(P(\lambda),P(\mu)) \neq 0$. 
\end{proof}

\subsection{}\label{sec:fiberspi} We use Lemma \ref{claim:idempotentlifting} to lift the statements of Lemma \ref{prop:Vfunctor} to $\Lmod{\H^+_{\mbf{c}}}$. 

\begin{prop}\label{prop:liftPE}
We have an isomorphism of exact functors $E \simeq \bigoplus_{\Omega \in \CM_{\mbf{c}}(W)} \Hom_{\H^+_{\mbf{c}}} (P^+(\lambda_{\Omega}), -  ) : \Lmod{\H^+_{\mbf{c}}} \rightarrow \Lmod{e \H^+_{\mbf{c}} e}$.
\end{prop}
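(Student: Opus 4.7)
The plan is to exhibit a concrete projective representing object for $E$ and decompose it. Since $\H^+_{\mbf{c}} e$ is a direct summand of $\H^+_{\mbf{c}}$, it is projective, and there is a natural isomorphism $E(M) = eM \simeq \Hom_{\H^+_{\mbf{c}}}(\H^+_{\mbf{c}} e, M)$, so $E \simeq \Hom_{\H^+_{\mbf{c}}}(\H^+_{\mbf{c}} e, -)$ as exact functors. The proposition therefore reduces to establishing the direct sum decomposition
\[
\H^+_{\mbf{c}} e \;\simeq\; \bigoplus_{\Omega \in \CM_{\mbf{c}}(W)} P^+(\lambda_{\Omega})
\]
of $\H^+_{\mbf{c}}$-modules.

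\textbf{Computing the multiplicities.} By the Krull--Schmidt property (Lemma \ref{claim:idempotentlifting}(1)) together with the classification of indecomposable projectives from parts (2) and (3) of the same lemma, we may write $\H^+_{\mbf{c}} e \simeq \bigoplus_{\lambda \in \Irr(W)} P^+(\lambda)^{\oplus m_\lambda}$ for nonnegative integers $m_\lambda$. To pin down the $m_\lambda$ I apply the reduction functor $(-)^{\bo}$, which is exact on projectives by Lemma \ref{claim:idempotentlifting}(4),(5). From the proof of Lemma \ref{claim:idempotentlifting}(2)--(3), if $a \in \H^+_{\mbf{c}}$ is a primitive idempotent with $\H^+_{\mbf{c}} a \simeq P^+(\lambda)$, then its image in $\H^{\bo}_{\mbf{c}}$ is again primitive and $\H^{\bo}_{\mbf{c}} a \simeq P(\lambda)$, so $P^+(\lambda)^{\bo} \simeq P(\lambda)$. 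Since $(\H^+_{\mbf{c}} e)^{\bo} = \H^{\bo}_{\mbf{c}} e$, we obtain
\[
\H^{\bo}_{\mbf{c}} e \;\simeq\; \bigoplus_{\lambda \in \Irr(W)} P(\lambda)^{\oplus m_\lambda}.
\]
Applying $\Hom_{\H^{\bo}_{\mbf{c}}}(-, L(\lambda))$ then gives $m_\lambda = \dim \Hom_{\H^{\bo}_{\mbf{c}}}(\H^{\bo}_{\mbf{c}} e, L(\lambda)) = \dim e L(\lambda)$, which by Theorem \ref{prop:Vfunctor}(1)--(2) equals $1$ if $\lambda = \lambda_{\Omega}$ for some $\Omega \in \CM_{\mbf{c}}(W)$ and $0$ otherwise.

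\textbf{Main obstacle.} The only nontrivial point is verifying the identification $P^+(\lambda)^{\bo} \simeq P(\lambda)$ used above, ensuring that the Krull--Schmidt decompositions upstairs and downstairs correspond under $(-)^{\bo}$. This is essentially contained in the proof of Lemma \ref{claim:idempotentlifting}(2)--(3) via idempotent lifting from $\H^{\bo}_{\mbf{c}}$ to $\H^+_{\mbf{c}}$; once one records this explicitly, the remaining steps (naturality of $eM \simeq \Hom(\H^+_{\mbf{c}} e, M)$, counting multiplicities by Hom, exactness of both sides) are formal.
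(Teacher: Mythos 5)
Your proof is correct and takes essentially the same approach as the paper: decompose the representing projective $\H^+_{\mbf c}e$ into indecomposable projectives $P^+(\lambda)$ (via the lifting of idempotents/Krull--Schmidt structure in Lemma \ref{claim:idempotentlifting}) and count multiplicities as $\dim eL(\lambda)$, concluding by Theorem \ref{prop:Vfunctor}. The only cosmetic difference is that you pass through $(-)^{\bo}$ and compute $\Hom_{\H^{\bo}_{\mbf c}}(P(\lambda),L(\lambda))$, whereas the paper computes $n_\lambda=\dim E(L(\lambda))$ directly over $\H^+_{\mbf c}$; both yield the same count.
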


\begin{proof}
It is a standard fact in graded ring theory that the forgetful functor $\pgrmod{\H^+_{\mbf{c}}} \rightarrow \promod{\H^+_{\mbf{c}}}$ is essentially surjective. Therefore part (3) of Lemma \ref{claim:idempotentlifting}  implies that there exist integers $n_{\lambda}$ such that $E \simeq \bigoplus_{\lambda \in \Irr(W)} \Hom_{\H_{\bc}^+}(P^+(\lambda)^{n_{\lambda}}, - )$. Then $n_{\lambda} = \dim E(L(\lambda))$. But $E(L(\lambda)) = 0$ if $\lambda$ is not the distinguished representative in its block and $\dim E(L(\lambda_{\Omega})) = 1$.
\end{proof}

\section{Endomorphism Algebras}\label{sec:endo}

\subsection{}  

Recall that the \textit{degrees} of a complex reflection group $(W,\h)$ is the multiset of degrees of some (any) choice of homogeneous, algebraically independent generators of $\C[\h]^W$. For brevity, we denote by $b_{\Omega}$, the $b$-invariant of $\lambda_{\Omega}^*$. 

\bthm\label{thm:deltaend}
Let $\Omega$ be a CM-partition and $\lambda_{\Omega}$ the distinguished representative. Then, 
\begin{enumerate}
\item The endomorphism ring $E_{\Omega}$ of the Verma module $\Delta(\lambda_{\Omega})$ is an $\N$-graded quotient of $\ZH_{\mbf{c}}$. In particular, it is commutative. 
\item $e\Delta(\lambda)$ is a cyclic $E_{\Omega}$-module. 
\item The graded character of $E_{\Omega}$ is
$$
\textrm{ch}_q (E_{\Omega}) = q^{-b_{\Omega}}f_{b_{\Omega}} (q) \prod_{i = 1}^n \frac{1}{1 - q^{d_i}},
$$
where $d_1, \dots, d_n$ are the degrees of $(W,\h)$. 
\end{enumerate}
\ethm

\begin{proof}
All the claims of the theorem follow more or less directly from Proposition \ref{prop:liftPE}. Let $B$ be the block of $\H_{\mbf{c}}^+$ corresponding to $\Omega$ and $P^+(\lambda_{\Omega})$ the projective cover of $L(\lambda_{\Omega})$ in $\grmod{\H_{\mbf{c}}^+}$. Applying $\Hom_{\H_{\mbf{c}}^+}(P^+(\lambda_{\Omega}), - )$ to the exact sequence $P^+(\lambda_{\Omega}) \rightarrow \Delta(\lambda_{\Omega}) \rightarrow 0$ gives a surjection of $e B e$-modules, $e B e \rightarrow e \Delta(\lambda_{\Omega}) \rightarrow 0$. Thus, $e \Delta(\lambda_{\Omega}) $ is a cyclic $eBe$-module. Since the action of $\ZH_{\mbf{c}}$ on  $e \Delta(\lambda_{\Omega}) $ factors through $eBe$, it is also a cyclic $\ZH_{\mbf{c}}$-module. 

Similarly, applying $\Hom_{\H_{\mbf{c}}^+}(-, \Delta(\lambda_{\Omega}) )$ to $P^+(\lambda_{\Omega}) \rightarrow \Delta(\lambda_{\Omega}) \rightarrow 0$ gives an exact sequence $0 \rightarrow E_{\lambda_{\Omega}} \rightarrow e \Delta(\lambda_{\Omega})$. Let $\ZH_{\Omega}$ denote the image of $\ZH_{\mbf{c}}$ in $E_{\Omega}$. Then we have inclusions $\ZH_{\Omega} \hookrightarrow E_{{\Omega}} \rightarrow e \Delta(\lambda_{\Omega})$. These can be considered morphisms of finitely generated, graded $\C[\h]^W$-modules. If $\C_0$ is the quotient $\C[\h]^W$ by its augmentation ideal then Lemma \ref{prop:Vfunctor} implies that morphism $\C_0 \o_{\C[\h]^W} \ZH_{\Omega} \rightarrow \C_0 \o_{\C[\h]^W} e \Delta(\lambda_{\Omega})$ is an isomorphism. Therefore, the graded Nakayama lemma implies that $\ZH_{\Omega} \rightarrow  e \Delta(\lambda_{\Omega})$ is surjective. Thus, $\ZH_{\Omega} \stackrel{\sim}{\longrightarrow} E_{{\Omega}} \stackrel{\sim}{\longrightarrow} e\Delta(\lambda_{\Omega})$. This implies parts (1) and (2). 

Recall that $b_{\Omega} := b_{\lambda_{\Omega}^*}$. As in the proof of \cite[Theorem 5.6]{Baby}, an easy calculation shows that    
$$
\textrm{ch}_q (E_{\Omega}) = \textrm{ch}_q \left( e \Delta(\lambda)[-b_{\Omega}] \right) = q^{-b_{\Omega}}f_{\Omega}(q) \prod_{i = 1}^n \frac{1}{1 - q^{d_i}}.
$$
\end{proof}

Lemma \ref{prop:Vfunctor} and Theorem \ref{thm:deltaend} are false if $\lambda$ is not the distinguished representative in its block.

\begin{remark}
When $(\h,W) = (\C^n, \s_n)$ and $\bc \neq 0$, we have 
$$
\mathrm{ch}_q( E_{\lambda}) = \frac{1}{H_{\lambda}(q)},
$$
where $H_{\lambda}(q)$ is the hook polynomial of the partition $\lambda$. 
\end{remark}

\subsection{} When $\Omega = \{ \lambda_{\Omega} \}$ consists of a single element, one can say a great deal more about the endomorphism ring $E_{\Omega}$.

\begin{lem}\label{lem:smoothsupport}
If $\Omega = \{ \lambda_{\Omega} \}$ then $\Supp ( \Delta (\lambda))$ is contained in the smooth locus of $X_{\mbf{c}}$.
\end{lem}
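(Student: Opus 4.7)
The plan is to combine a $\Cs$-contraction argument with the known characterization of the smooth locus of $X_{\mbf{c}}$. Since $\C[\h^*]^W_+$ annihilates $\Delta(\lambda_\Omega)$ by construction, the support $S := \Supp \Delta(\lambda_\Omega)$ is contained in $\pi^{-1}(0)$. Moreover, placing $\lambda_\Omega$ in degree zero makes $\Delta(\lambda_\Omega)$ a graded $\H_{\mbf{c}}$-module, so $S$ is a closed, $\Cs$-stable subscheme of $\pi^{-1}(0)$.

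Next, the restricted morphism $\varpi : \pi^{-1}(0) \to \h/W$ is finite and hence proper, while the $\Cs$-action on $\h/W$ contracts every point to $0$ as $t \to \infty$ (since $\h$ sits in negative $\Cs$-weight). Lifting this contraction by properness, every $\xi \in \pi^{-1}(0)$ admits a well-defined limit $\lim_{t\to\infty} t\cdot \xi$ lying in $\Upsilon^{-1}(0,0)$. I then identify $S \cap \Upsilon^{-1}(0,0) = \{\chi_\Omega\}$, where $\chi_\Omega$ is the closed point corresponding to the block $B_\Omega$ via M\"uller's Theorem: base-changing at $\mbf{b}=\mbf{0}$ yields the baby Verma $\Delta(0,\lambda_\Omega,\mbf{0})$, which by projectivity of $P(\lambda_\Omega)$ (lifting its surjection onto $L(\lambda_\Omega)$) is a quotient of $P(\lambda_\Omega)$, and hence lies in $B_\Omega$, so its support is $\{\chi_\Omega\}$.

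The main step, and primary obstacle, is to show that $\chi_\Omega$ is a smooth point of $X_{\mbf{c}}$ under the singleton hypothesis. Here $B_\Omega$ has a unique simple module, and the argument appeals to the Brown--Gordon characterization of the smooth locus as the Azumaya locus of $\H_{\mbf{c}}$ over $\ZH_{\mbf{c}}$: a closed point of $\Upsilon^{-1}(0,0)$ is smooth if and only if the corresponding CM block is a singleton. This step relies on external input; alternatively, one can attempt to deduce $\dim L(\lambda_\Omega) = |W|$ directly by combining Theorem \ref{prop:Vfunctor}, which supplies $\dim e L(\lambda_\Omega) = 1$, with BGG-type reciprocity for baby Vermas, thereby placing $\chi_\Omega$ into the Azumaya locus.

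Once smoothness of $\chi_\Omega$ is established, the conclusion is immediate from openness. The smooth locus of $X_{\mbf{c}}$ is open and $\Cs$-stable. For each $\xi \in S$ we have $\lim_{t\to\infty} t\cdot \xi = \chi_\Omega$, which lies in the smooth locus; hence $t \cdot \xi$ lies in the smooth locus for $t$ sufficiently large, and by $\Cs$-stability $\xi = t^{-1}\cdot(t\cdot \xi)$ lies there as well. Thus $S$ is contained in the smooth locus, as required.
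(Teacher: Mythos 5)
Your proposal is correct and follows essentially the same strategy as the paper: contract the ($\Cs$-stable) support to the fixed point $\bx_\Omega$, observe that the singular locus is closed and $\Cs$-stable, and conclude from smoothness of $\bx_\Omega$ when $\Omega$ is a singleton. The paper arrives at the contraction statement a bit more directly: instead of invoking properness of $\varpi|_{\pi^{-1}(0)}$ and a separate identification of $S\cap\Upsilon^{-1}(0,0)$ via the baby Verma, it observes that (by Theorem \ref{thm:deltaend}) the quotient $\ZH_{\mbf{c}}/\ann\Delta(\lambda_\Omega)$ is positively graded with degree-zero piece $\C$, so $\Supp\Delta(\lambda_\Omega)$ is automatically the attracting set of a single fixed point, namely $\bx_\Omega$. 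Both arguments rest on the same external input — that a singleton CM block corresponds to a point in the smooth (equivalently Azumaya) locus — which the paper leaves implicit and you correctly flag.
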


\begin{proof}
As mentioned in section \ref{sec:CM}, there is a natural bijection between the elements of $\CM_{\mbf{c}}(W)$ and the points in $\Upsilon^{-1}(0,0)$. Let $\bx_{\Omega}$ be the point corresponding to $\Omega$. The singular locus of $X_{\mbf{c}}$ is $\Cs$-stable. The module $\Delta (\lambda)$ is graded by putting $1 \o \lambda$ in degree zero. Then, all non-zero weight spaces are positive. Therefore, if $I$ is the annihilator of $\Delta (\lambda)$ in $\ZH_{\mbf{c}}$, then the quotient $\ZH_{\mbf{c}}/I$ is positively graded with degree zero part equal to $\C$. This implies that $\Supp \Delta(\lambda)$ is contained inside the attacking set $\{ x \in X_{\mbf{c}} \ | \ \lim_{t \rightarrow \infty} t \cdot x = \bx_{\Omega} \}$. The singular locus $X_{\mbf{c}}(W)_{\sing}$ of $X_{\mbf{c}}$ is $\Cs$-stable. Therefore, if $\Supp \Delta (\lambda) \cap X_{\mbf{c}}^{\sing} \neq \emptyset$ then $\bx_{\Omega}$ belongs to this intersection.  
\end{proof}

\begin{cor}\label{cor:polyring}
If $\Omega = \{ \lambda_{\Omega} \}$, then the commutative ring $E_{\Omega}$ is a graded polynomial ring.
\end{cor}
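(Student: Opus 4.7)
The plan is to use Lemma~\ref{lem:smoothsupport} together with a Bia{\l}ynicki-Birula argument on the smooth symplectic locus of $X_{\mbf{c}}$, and then rule out nilpotents in $E_\Omega$ by exploiting Poisson involutivity of its defining ideal.

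First, the set-up. The Verma module $\Delta(\lambda_\Omega)$ is quantizable by construction (it deforms to an $\H_{\mbf{t},\mbf{c}}$-module using the same PBW presentation), so Gabber's theorem forces the scheme-theoretic support $V := \Spec E_\Omega$ to be coisotropic in $X_{\mbf{c}}$. The Hilbert series in Theorem~\ref{thm:deltaend}(3) has a pole of order $n$ at $q=1$, giving $\dim V = n$, and hence $V$ is Lagrangian. Lemma~\ref{lem:smoothsupport} then places $V$ inside the smooth locus $X_{\mbf{c}}^{\sm}$, which is a smooth symplectic variety.

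Second, I apply Bia{\l}ynicki-Birula. Since $E_\Omega$ is $\N$-graded with $(E_\Omega)_0 = \C$, the $\Cs$-action contracts $V$ onto the unique fixed point $\bx_\Omega$, just as in the proof of Lemma~\ref{lem:smoothsupport}. Working in $X_{\mbf{c}}^{\sm}$, the Bia{\l}ynicki-Birula theorem identifies the attracting set of $\bx_\Omega$ with the positive-weight subspace $U$ of $T_{\bx_\Omega} X_{\mbf{c}}$, together with its linear $\Cs$-action. Because the symplectic form transforms by a character of $\Cs$, it pairs positive with negative weights non-degenerately, so $\dim U = n$ and $U \cong \C^n$ as $\Cs$-varieties. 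Hence $V$ is a closed subvariety of $U \cong \C^n$ of the same dimension $n$, which forces $V^{\red} = U$, and consequently $E_\Omega^{\red}$ is the graded polynomial ring $\C[U]$ in $n$ variables.

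Third (the main obstacle), I need to rule out nilpotents, i.e., show $E_\Omega = E_\Omega^{\red}$. The reduced scheme $V^{\red}$ is a smooth Lagrangian in $X_{\mbf{c}}^{\sm}$, so locally around $\bx_\Omega$ there exist Darboux coordinates $(p_1,\ldots,p_n,q_1,\ldots,q_n)$ with $I(V^{\red}) = (p_1,\ldots,p_n)$. The defining ideal $I(V)$ is a $(p_1,\ldots,p_n)$-primary ideal (as $V$ is irreducible with $V^{\red} = U$) which, by Gabber's theorem applied to the quantizable module $\Delta(\lambda_\Omega)$, is Poisson-closed in $\mathcal{O}_{X_{\mbf{c}}^{\sm}}$. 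A direct computation with $\{p_i, q_j\} = \delta_{ij}$ shows that any proper $(p_1,\ldots,p_n)$-primary involutive ideal must in fact equal $(p_1,\ldots,p_n)$: indeed, if $I(V)$ contained an element of the form $p_i^k$ with $k\geq 2$ (or, more generally, was strictly smaller than $(p_1,\ldots,p_n)$), then iterated Poisson brackets with $q_1,\ldots,q_n$ would produce elements of lower order that escape $I(V)$. Therefore $I(V) = I(V^{\red})$, so $V = V^{\red}$ is smooth, and $E_\Omega \cong \C[z_1,\ldots,z_n]$ is a graded polynomial ring.

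The hardest step is the last one, namely the rigidity assertion that a Poisson-closed primary ideal whose radical defines a smooth Lagrangian is itself radical. One can regard this as a local, infinitesimal form of the uniqueness of quantization of smooth Lagrangians; it reduces (via formal Darboux) to a short calculation, but it is the only place where the symplectic structure really enters.
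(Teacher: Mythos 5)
Your first two steps are sound and closely parallel the paper's setup: the Hilbert series from Theorem~\ref{thm:deltaend}(3) does give Krull dimension $n$, and Lemma~\ref{lem:smoothsupport} plus the $\C^{\times}$-equivariance of the symplectic form puts everything inside a linear attracting cell $U \simeq \C^n$. The genuine gap is in step three, the claim that involutivity rules out nilpotents, and unfortunately this claim is false. Gabber's theorem asserts that the \emph{radical} of the annihilator is involutive in the sense $\{J,J\} \subseteq J$ (a coisotropic, not a Poisson, ideal). Even if one could somehow get involutivity of $I(V)$ itself, the assertion ``a primary involutive ideal whose radical defines a smooth Lagrangian is itself radical'' is wrong. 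Take $\C[p,q]$ with $\{p,q\}=1$ and $I = (p^2)$: this is $(p)$-primary, and for $fp^2, gp^2 \in I$ one has $\{fp^2, gp^2\} \in (p^2)$, so $I$ is involutive, yet $I \neq (p)$. Your bracketing argument with $q_1,\dots,q_n$ implicitly uses the much stronger condition $\{I,\mathcal{O}\} \subseteq I$ (a Poisson ideal), which Gabber does not give — and indeed cannot hold here, since a Poisson ideal in a smooth symplectic variety defines a union of symplectic leaves, which would force $V = X_{\mathbf{c}}^{\mathrm{sm}}$.

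The paper avoids the problem entirely by never passing to the reduced scheme. It shows directly that the Zariski tangent space $T = (E_+/E_+^2)^*$ at the cone point has dimension exactly $n$: the inequality $\dim T \ge n$ follows from $\dim E_{\Omega} = n$ (a connected graded algebra needs at least as many generators as its Krull dimension), while $\dim T \le n$ follows from $T \subseteq T_+$, the isolated fixed point giving $T_0 = 0$ via \cite{BBFix}, and the $\C^{\times}$-invariant symplectic form forcing $\dim T_+ = \dim T_- = n$. Once $\dim T = \dim E_{\Omega} = n$, the graded analogue of the regular-local-ring criterion (\cite[Theorem~1, Appendix~III]{SerreLocalAlgebra}) gives that $E_{\Omega}$ is a polynomial ring on the nose — reducedness is a \emph{consequence}, not an extra hypothesis to establish. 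You could repair your argument by replacing your step three with this tangent-space computation; your steps one and two already contain most of the needed ingredients.
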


\begin{proof}
The algebra $E_{{\Omega}}$ is $\N$-graded and connected. Therefore, there is a unique graded, maximal ideal $\mf{m}$ in $E_{{\Omega}}$. The algebra $E_{{\Omega}}$ is a finite module over $\C[\h]^W$. Therefore, by \cite[Corollary 1.4.5]{Benson}, the Krull dimension of $E_{{\Omega}}$ is $\dim \h$. Let $T = (E_+ / E_+^2)^*$ be the tangent space at the unique closed $\Cs$-fixed point of $\Spec (E_{{\Omega}})$. If we can show that $\dim T = n$ then the corollary follows from standard results on graded rings e.g. \cite[Theorem 1, Appendix III]{SerreLocalAlgebra}. 

Let's show $\dim T = n$. It must have dimension at least $n$. Since $\Spec (E_{{\Omega}})$ is a closed subvariety of $X_{\mbf{c}}$, $T$ is a subspace of $T_{\bx_{{\Omega}}} X_\mbf{c}$. The fact that $\bx_{{\Omega}}$ is a $\Cs$-fixed point implies that the space $T_{\bx_{{\Omega}}} X_{\mbf{c}}$ is a $\Cs$-module and $T$ is a submodule. We decompose $T_{\bx_{{\Omega}}} X_{\mbf{c}} = T_- \oplus T_0 \oplus T_+$, where $T_-$ consists of all weight spaces of strictly negative weights etc. We have $T \subset T_+$. Since $\bx_{{\Omega}}$ is an isolated fixed point, it follows from \cite[Corollary 2.2]{BBFix} that $T_0 = 0$. By assumption, $\bx_{{\Omega}}$ is contained in the smooth locus of $X_{\mbf{c}}$. Therefore, $T_{\bx_{{\Omega}}} X_\mbf{c}$ is a symplectic vector space. The symplectic form on $T_{\bx_{{\Omega}}} X_{\mbf{c}}$ is $\Cs$-invariant. This implies that $\dim T_+ = \dim T_- = n$. Thus, $T \subset T_+$ implies that $\dim T \le n$ as required. 
\end{proof}

\begin{cor}
If $\Omega = \{ \lambda_{\Omega} \}$, then the $E_{\Omega}$-module $\Delta (\lambda_{\Omega})$ is free of rank $|W|$.
\end{cor}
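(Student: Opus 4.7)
The plan is to show that $\Delta(\lambda_{\Omega})$ is maximal Cohen--Macaulay over the polynomial ring $E_{\Omega}$, from which graded freeness follows formally, and then to pin down the rank by a comparison of $\C[\h]^W$-ranks.

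The first step is to observe that by the PBW theorem $\Delta(\lambda_{\Omega}) \simeq \C[\h] \o \lambda_{\Omega}$ as graded $\C[\h]$-modules, and Chevalley--Shephard--Todd then makes $\Delta(\lambda_{\Omega})$ into a graded free $\C[\h]^W$-module of rank $|W| \cdot \dim \lambda_{\Omega}$. In particular $\mathrm{depth}_{\C[\h]^W} \Delta(\lambda_{\Omega}) = n$.

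The second step is to transfer this depth calculation to $E_{\Omega}$. By Theorem \ref{thm:deltaend}(1), the action of $\ZH_{\bc}$ on $\Delta(\lambda_{\Omega})$ factors through $E_{\Omega}$, yielding a homomorphism $\C[\h]^W \to E_{\Omega}$ that is finite by Lemma \ref{lem:basicprop}. For a finite homomorphism of $\N$-graded connected $\C$-algebras, depth of a finitely generated graded module is preserved (since the irrelevant ideals have the same radical after extension). Consequently, $\mathrm{depth}_{E_{\Omega}} \Delta(\lambda_{\Omega}) = n$. Since $E_{\Omega}$ is a graded polynomial ring of Krull dimension $n$ by Corollary \ref{cor:polyring}, Auslander--Buchsbaum then forces $\Delta(\lambda_{\Omega})$ to be projective, and hence free, over $E_{\Omega}$ (any finitely generated graded projective module over a connected graded Noetherian ring is free by graded Nakayama).

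The third step is the rank computation. Writing $\Delta(\lambda_{\Omega}) \simeq \bigoplus_{i = 1}^r E_{\Omega}[m_i]$, by Theorem \ref{thm:deltaend}(2) we have $E_{\Omega} \simeq e\Delta(\lambda_{\Omega}) = (\C[\h] \o \lambda_{\Omega})^W$, which Chevalley again shows is graded free of rank $\dim \lambda_{\Omega}$ over $\C[\h]^W$. Taking $\C[\h]^W$-ranks gives $|W| \dim \lambda_{\Omega} = r \cdot \dim \lambda_{\Omega}$, so $r = |W|$.

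The main potential obstacle is verifying depth preservation under the finite graded extension $\C[\h]^W \to E_{\Omega}$; while classical, this point is worth checking carefully in the connected graded setting. An attractive alternative, more geometric, route is to exploit that $\Supp \Delta(\lambda_{\Omega})$ lies in the smooth locus of $X_{\bc}$ (Lemma \ref{lem:smoothsupport}), where $\H_{\bc}$ is Azumaya over $\ZH_{\bc}$ with the idempotent $e$ providing a Morita equivalence. There one has $\Delta(\lambda_{\Omega}) \simeq \H_{\bc} e \o_{\ZH_{\bc}} e\Delta(\lambda_{\Omega})$, and since $\H_{\bc} e$ is locally free of rank $|W|$ over $\ZH_{\bc}$ on the smooth locus, this directly exhibits $\Delta(\lambda_{\Omega})$ as locally free of rank $|W|$ over $E_{\Omega}$, hence free by Corollary \ref{cor:polyring}.
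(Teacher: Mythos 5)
Your proof is correct but takes a genuinely different route from the paper's. The paper shows directly that every fiber $L = \Delta(\lambda_\Omega)/\mf{m}\cdot\Delta(\lambda_\Omega)$ has dimension exactly $|W|$: cyclicity of $e\Delta(\lambda_\Omega)$ gives $\dim eL = 1$, and since $L$ is supported in the smooth locus (Lemma \ref{lem:smoothsupport}) it must be a single copy of the unique simple there, hence of dimension $|W|$; a finitely generated module of constant fiber dimension over the polynomial ring $E_\Omega$ is then free of that rank. Your primary argument instead exploits the PBW freeness $\Delta(\lambda_\Omega) \simeq \C[\h] \otimes \lambda_\Omega$ over $\C[\h]^W$ to get depth $n$, transfers this depth along the finite graded map $\C[\h]^W \to E_\Omega$, and applies Auslander--Buchsbaum to conclude projectivity, extracting the rank separately by comparing $\C[\h]^W$-ranks. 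This is more homological in flavour and invokes the smoothness of the support only once (through Corollary \ref{cor:polyring}) rather than twice, at the cost of splitting the freeness and rank steps. Your alternative Morita/Azumaya sketch is essentially the paper's argument recast at the level of modules rather than fibers, and is fine. One small imprecision: finiteness of $\C[\h]^W \to E_\Omega$ is not quite immediate from Lemma \ref{lem:basicprop} as stated --- it follows because $\langle \C[\h^*]^W_+\rangle$ annihilates $\Delta(\lambda_\Omega)$, so $E_\Omega$ is a quotient of $\ZH_{\bc}/\langle \C[\h^*]^W_+\rangle$, which is finite over $\C[\h]^W$ by Lemma \ref{lem:basicprop}(2); or, more directly, from your own identification $E_\Omega \simeq (\C[\h]\otimes\lambda_\Omega)^W$, which is free of rank $\dim\lambda_\Omega$ over $\C[\h]^W$ by Chevalley.
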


\begin{proof}
Since we have shown in Corollary \ref{cor:polyring} that $E_{\Omega}$ is a polynomial ring, and $\Delta (\lambda_{\Omega})$ is a finitely generated $E_{\Omega}$-module, it suffices to show that $\Delta (\lambda_{\Omega})$ is projective. Let $\mf{m} \subset E_{\Omega}$ be a maximal ideal and consider the quotient $L = \Delta (\lambda_{\Omega}) / \mf{m} \cdot \Delta(\lambda_{\Omega})$. This space is non-zero since any endomorphism $\phi \in E_{\Omega}$ whose image in $E_{\Omega} / \mf{m} \cdot E_{\Omega}$ is non-zero induces a non-zero endomorphism of $L$. On the other hand, since $e \Delta(\lambda_{\Omega})$ is a cyclic $E_{\Omega}$-module, the map 
$$
E_{\Omega} / \mf{m} \cdot E_{\Omega} \longrightarrow e \Delta (\lambda_{\Omega}) / \mf{m} \cdot e \Delta(\lambda_{\Omega}) = e L
$$
is an isomorphism. In particular, $\dim e L = 1$. By Lemma \ref{lem:smoothsupport}, the support of $\Delta(\lambda_{\Omega})$, and hence of $L$ too, is contained in the smooth locus of $X_{\mbf{c}}$. Therefore, $L$ is simple and has dimension $|W|$. Thus, every fiber of $\Delta (\lambda_{\Omega})$ has dimension $|W|$ over $E_{\Omega}$. 
\end{proof}

\subsection{} 

We have shown in Theorem \ref{thm:deltaend} that $\ZH_{\mbf{c}}$ surjects on to $\End_{\H_{\mbf{c}}}(\Delta(\lambda))$. 

\begin{prop}\label{thm:someiso}
Assume that $X_{\mbf{c}}$ is smooth. Then, multiplication defines a graded isomorphism
\beq{eq:endceniso}
\ZH_{\mbf{c}} \bigm/ \sqrt{ \langle \C[\h]_+^W \rangle } \stackrel{\sim}{\longrightarrow} \End_{\H_{\mbf{c}}}\left( \bigoplus_{\lambda \in \Irr(W)} \Delta(\lambda) \right).
\eeq
\end{prop}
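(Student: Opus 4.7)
Throughout I assume $X_{\mbf{c}}$ is smooth, so every Calogero-Moser partition is a singleton and Theorem~\ref{thm:deltaend} together with Corollary~\ref{cor:polyring} apply to every $\lambda\in\Irr(W)$. Writing $E_\lambda := \End_{\H_{\mbf{c}}}(\Delta(\lambda))$, these results give that $E_\lambda$ is a graded quotient of $\ZH_{\mbf{c}}$ isomorphic to a polynomial ring in $n=\dim\h$ variables, so $\Lambda_\lambda := \Spec E_\lambda \simeq \mathbb{A}^n$ embeds in $X_{\mbf{c}}$ as a closed, $\Cs$-stable subvariety with unique $\Cs$-fixed point $\bx_\lambda$.

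My first goal will be to show that the $\Lambda_\lambda$ are pairwise disjoint. The intersection $\Lambda_\lambda\cap\Lambda_\mu$ is a closed $\Cs$-stable subscheme of $\Lambda_\lambda\simeq\mathbb{A}^n$, and because $E_\lambda$ is concentrated in non-negative degrees, any non-empty closed $\Cs$-stable subscheme of $\Lambda_\lambda$ must contain the origin $\bx_\lambda$; the symmetric argument forces such an intersection to also contain $\bx_\mu$, contradicting the fact that the $\bx_\lambda$ are distinct closed points of $\Upsilon^{-1}(0,0)$.

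The main step will be to identify $\bigsqcup_\lambda\Lambda_\lambda$ with $V(\sqrt{\langle\C[\h]^W_+\rangle})$ as reduced schemes. A graded integral-dependence argument shows that every positive-degree $z\in\ZH_{\mbf{c}}$ lies in $\sqrt{\langle\C[\h]^W_+\rangle}$: pick a homogeneous integral relation $z^N+\sum_{i\ge 1}a_i z^{N-i}=0$ over $\C[\h]^W\o\C[\h^*]^W$; each $a_i$ is homogeneous of degree $i\deg z>0$, and since $\C[\h^*]^W$ lives in non-positive degrees the degree-$i\deg z$ graded piece of $\C[\h]^W\o\C[\h^*]^W$ is contained in $\C[\h]^W_+\cdot \C[\h^*]^W\subset \langle\C[\h]^W_+\rangle$, so $z^N\in\langle\C[\h]^W_+\rangle$. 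Consequently $V(\sqrt{\langle\C[\h]^W_+\rangle})$ coincides with the attracting locus $X^+ := \{x : \lim_{t\to\infty}t\cdot x \text{ exists}\}$, which contains each $\Lambda_\lambda$ by the positive-grading argument underlying Lemma~\ref{lem:smoothsupport}. Conversely, since $X_{\mbf{c}}$ is smooth symplectic with $\Cs$ acting with weight one on the symplectic form (so $\dim T^+_{\bx_\lambda}X_{\mbf{c}}=n$), the Bia\l ynicki-Birula decomposition writes $X^+=\bigsqcup_\lambda X^+_\lambda$ with each attracting cell $X^+_\lambda$ an irreducible affine space of dimension $n$. The $n$-dimensional closed subvariety $\Lambda_\lambda$ of the $n$-dimensional irreducible $X^+_\lambda$ must equal it, so $V(\sqrt{\langle\C[\h]^W_+\rangle})=\bigsqcup_\lambda\Lambda_\lambda$.

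Finally, Morita equivalence between $\H_{\mbf{c}}$ and $\ZH_{\mbf{c}}=e\H_{\mbf{c}} e$ on the smooth $X_{\mbf{c}}$ gives
\[
\End_{\H_{\mbf{c}}}\!\Big(\bigoplus_\lambda\Delta(\lambda)\Big)\;\simeq\;\End_{\ZH_{\mbf{c}}}\!\Big(\bigoplus_\lambda e\Delta(\lambda)\Big)=\End_{\ZH_{\mbf{c}}}\!\Big(\bigoplus_\lambda E_\lambda\Big),
\]
using $e\Delta(\lambda)=E_\lambda$ from Theorem~\ref{thm:deltaend}. Disjointness of the $\Lambda_\lambda$ makes the annihilators $I_\lambda$ pairwise coprime, so every off-diagonal Hom vanishes (by Nullstellensatz) and this ring collapses to $\bigoplus_\lambda E_\lambda$; on the other side, Chinese remainder gives $\ZH_{\mbf{c}}/\sqrt{\langle\C[\h]^W_+\rangle}\simeq\bigoplus_\lambda E_\lambda$, and both identifications are induced by central multiplication, yielding the desired graded isomorphism. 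The hardest part of the argument is the previous paragraph, where I need to rule out extra irreducible components of $V(\sqrt{\langle\C[\h]^W_+\rangle})$ beyond the $\Lambda_\lambda$; this is where the Bia\l ynicki-Birula decomposition and the symplectic dimension count do the real work.
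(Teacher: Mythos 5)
Your proof is correct and takes essentially the same route as the paper, but it fills in a step the paper leaves implicit: the paper simply asserts (citing Lemma~\ref{lem:smoothsupport}, which only gives one inclusion) that the supports of the $\Delta(\lambda)$ are \emph{precisely} the attracting sets and hence exhaust $V(\sqrt{\langle\C[\h]^W_+\rangle})$, whereas you actually justify this via the graded integral-dependence argument and the Bia\l ynicki-Birula decomposition together with the symplectic dimension count. One small slip: the symplectic form on $X_{\mbf{c}}$ is $\Cs$-invariant (weight zero), not of weight one --- this is exactly what is used in the proof of Corollary~\ref{cor:polyring} --- but your conclusion that $\dim T^+_{\bx_\lambda}X_{\mbf{c}}=n$ is unaffected.
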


\begin{proof}
The supports of the modules $\Delta(\lambda)$ are also disjoint because they are precisely the attracting sets for the $\Cs$-action c.f. Lemma \ref{lem:smoothsupport}. Therefore $\End_{\H_{\mbf{c}}}\left(\bigoplus_{\lambda \in \Irr(W)} \Delta(\lambda)\right) \simeq \bigoplus_{\lambda \in \Irr(W)} \End_{\H_{\mbf{c}}}(\Delta(\lambda))$. Theorem \ref{thm:deltaend} (1) together with Corollary \ref{cor:polyring} implies that the support of $\Delta(\lambda)$ is reduced. Thus, $\sqrt{ \langle \C[\h]_+^W \rangle } \cdot \Delta(\lambda) = 0$ for all $\lambda \in \Irr (W)$. These facts imply the statement of the proposition. 
\end{proof}

\begin{remark}
Even when $X_{\mbf{c}}$ is smooth, one can show that $\pi^{-1}(0)$ is not reduced. In fact, it will be reduced if and only if $W \simeq \Z_m$ and $\mbf{c}$ generic. To see this, notice that Theorem \ref{thm:someiso} implies that $\ZH_{\mbf{c}} / \sqrt{ \langle \C[\h]_+^W \rangle }$ is positively graded. However, if $W$ has irreducible representations of dimension greater than one then $\ZH_{\mbf{c}} / \langle \C [\h^*]^W_+ \rangle$ has non-zero graded pieces of negative degree.  
\end{remark}

\section{Generalized Verma modules}\label{sec:factor1}

In this section we extend Theorem \ref{thm:deltaend} to Verma modules for which $p \neq 0$. This is done by showing that the endomorphism ring of $\Delta (p,\lambda)$ is isomorphic to the endomorphism ring of the $\H_{\mbf{c}'}(W_p)$-module $\Delta (\lambda)$, where $\mbf{c}'$ denotes the restriction of $\mbf{c}$ to the reflections in $W_p$ and $\H_{\mbf{c}'}(W_p)$ denotes the rational Cherednik algebra associated to $(\h,W_p,\mbf{c}')$. First, we recall a certain completion of $\H_{\mbf{c}}(W)$, as defined by Bezrukavnikov and Etingof \cite{BE}. Our presentation will be slightly different from \textit{loc. cit.}, since it will be useful for the applications in \cite{BellSchubert} that it agrees with Wilson's factorization of the Calogero-Moser space when $W$ is the symmetric group.

Let $p \in \h^*$ and let $\ba$ be the image of $p$ in $\h^* / W$. We denote by $\mf{m}_{\ba}$ for the maximal ideal of $\C[\h^*]^W$ corresponding to $\ba$. The completion of $\H_{\mbf{c}}(W)$ with respect to the ideal generated by $\mf{m}_{\ba}$ is denoted $\widehat{\H}_{\mbf{c}}(W)_{\ba}$. The algebra $\C[\h^*]/ \langle \mf{m}_{\ba} \rangle$ is finite dimensional, with closed points corresponding to the $W$-orbit of $p$. Assume this orbit consists of $\ell$ points $p_1 = p,p_2, \ds, p_{\ell}$. Then, there exist primitive idempotents $\tilde{e}_1, \ds, \tilde{e}_{\ell}  \in \C[\h^*]/ \langle  \mf{m}_{\ba}\rangle $ such that  $\C[\h^*]/ \langle  \mf{m}_{\ba} \rangle = \bigoplus_{i = 1}^{\ell} (\C[\h^*]/ \langle \mf{m}_{\ba} \rangle) \cdot \tilde{e}_i$. Hensel's Lemma, \cite[Corollary 7.5]{Eisenbud}, implies that the primitive idempotents in 
$$
\wh{\C[\h^*]}_{\ba} := \lim_{\leftarrow k} \C[\h^*]/ \langle \mf{m}_{\ba}^k \rangle 
$$ 
are precisely the lifts $e_i$ of the $\tilde{e}_i$. Therefore,
$$
\wh{\C[\h^*]}_{\ba} = \bigoplus_{i = 1}^{\ell} \wh{\C[\h^*]}_{\ba} \cdot e_i = \bigoplus_{i = 1}^{\ell} \wh{\C[\h^*]}_{p_i}
$$
with each $\wh{\C[\h^*]}_{\bp} \cdot e_i$ a local ring. For all $k \ge 0$, the PBW property for rational Cherednik algebras implies that 
$$
\C[\h^*] \cap \mf{m}_{\ba}^k \cdot \H_{\mbf{c}}(W) = \mf{m}_{\ba}^k \cdot \C[\h^*].
$$
Hence, we have embeddings
$$
\C[\h^*] / \langle \mf{m}_{\ba}^k \rangle \hookrightarrow \H_{\mbf{c}}(W) / \mf{m}_{\ba}^k \cdot \H_{\mbf{c}}(W)
$$
and taking the inductive limit, $\wh{\C[\h^*]}_{\ba} \hookrightarrow \wh{\H}_{\mbf{c}}(W)_{\ba}$, where we have used the fact that the functor of inverse limit is left exact. Therefore, we have $e_i \in \wh{\H}_{\mbf{c}}(W)_{\ba}$ for all $i$ and 
\beq{eq:matrixalg}
\wh{\H}_{\mbf{c}}(W)_{\ba} = \bigoplus_{i,j = 1}^{\ell} e_i \wh{\H}_{\mbf{c}}(W)_{\ba} e_j.
\eeq
Let $\wh{\H}_{\mbf{c}'}(W_{p_i})_{p_i}$ denote the completion of $\H_{\mbf{c}'}(W_{p_i})$ with respect to the maximal ideal $\mf{n}_{p_i}$ in $\C[\h^*]^{W_{p_i}}$. We write $\widetilde{\C[\h^*]}_{p_i}$ for the completion of $\C[\h^*]$ with respect to the maximal ideal $\mf{n}_{p_i}$ in order to distinguish it from $\wh{\C[\h^*]}_{p_i}$. Write also $W_{i,j}$ for the subset of $W$ consisting of all elements $w$ such that $w \cdot e_i = e_j$. Before proving the main result of the section we require some preparatory lemmata. 

\begin{lem}\label{lem:multiso}
Multiplication defines a vector space isomorphism 
$$
e_i (\C W_{i,j} \o \C[\h]) \widehat{\o} \widetilde{\C[\h^*]}_{p_j} e_j \rightsim  e_i \wh{\H}_{\mbf{c}}(W)_{b} e_j.
$$
and hence
$$
e_i \wh{\H}_{\mbf{c}}(W)_{\ba} e_j \o e_j \wh{\H}_{\mbf{c}}(W)_{\ba} e_k \twoheadrightarrow e_i \wh{\H}_{\mbf{c}}(W)_{\ba} e_k
$$
for all $1 \le i,j,k \le \ell$.
\end{lem}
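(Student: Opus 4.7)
The plan is to deduce both assertions from the PBW theorem for $\H_{\mbf{c}}(W)$ combined with the idempotent decomposition of $\wh{\C[\h^*]}_{\ba}$ established earlier in the section.

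I start with the PBW isomorphism $\H_{\mbf{c}}(W) \cong \C W \o \C[\h] \o \C[\h^*]$ via multiplication, which presents $\H_{\mbf{c}}(W)$ as a free module over the central subalgebra $\C[\h^*]^W$. Since $\mf{m}_{\ba} \subset \C[\h^*]^W$, the $\mf{m}_{\ba}$-adic completion affects only the last factor and yields
\[
\wh{\H}_{\mbf{c}}(W)_{\ba} \cong \C W \o \C[\h] \widehat{\o} \wh{\C[\h^*]}_{\ba}
\]
as vector spaces. Combining with $\wh{\C[\h^*]}_{\ba} = \bigoplus_k \widetilde{\C[\h^*]}_{p_k} e_k$, right multiplication by $e_j$ projects onto the $j$-th summand, so $\wh{\H}_{\mbf{c}}(W)_{\ba} \cdot e_j \cong \C W \o \C[\h] \widehat{\o} \widetilde{\C[\h^*]}_{p_j} e_j$.

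Next, I analyse left multiplication by $e_i$. The key identity is $w f w^{-1} = w \cdot f$ for $f \in \wh{\C[\h^*]}_{\ba}$, which in the algebra yields $e_i w = w \cdot (w^{-1} \cdot e_i)$. Using this, for each $w \in W$ the product $e_i w a f e_j$ expands in the PBW basis with a single top-degree piece (with $\C W$-part the appropriate conjugate of $w$) plus commutator corrections of strictly lower $\C[\h]$-degree: the commutators $[e_k, a]$ produced by moving idempotents past elements of $\C[\h]$ drop $\C[\h]$-degree by one, as a consequence of the basic relation $[y,x] = \sum_s \bc(s) \frac{x(\alpha_s)\alpha_s^\vee(y)}{2} s$ whose right-hand side has zero $\C[\h]$-degree. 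Tracking which $w \in W$ give nontrivial top-degree contributions to $e_i \wh{\H}_{\mbf{c}}(W)_{\ba} e_j$ singles out the coset $W_{i,j}$. The resulting multiplication map $\mu \colon \C W_{i,j} \o \C[\h] \o \widetilde{\C[\h^*]}_{p_j} \to e_i \wh{\H}_{\mbf{c}}(W)_{\ba} e_j$, $w \o a \o f \mapsto e_i w a f e_j$, is then injective by a filtration argument on $\C[\h]$-degree using the linear independence of the PBW basis, and surjective by a dimension count: multiplication by a chosen element of $W_{i,j}$ identifies $e_j \wh{\H}_{\mbf{c}}(W)_{\ba} e_j$ with $e_i \wh{\H}_{\mbf{c}}(W)_{\ba} e_j$, so all $\ell$ blocks $e_l \wh{\H}_{\mbf{c}}(W)_{\ba} e_j$ have equal size $|W_{i,j}| \cdot \dim \C[\h] \cdot \dim \widetilde{\C[\h^*]}_{p_j}$, matching the domain of $\mu$.

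The surjectivity of $e_i \wh{\H}_{\mbf{c}}(W)_{\ba} e_j \o e_j \wh{\H}_{\mbf{c}}(W)_{\ba} e_k \twoheadrightarrow e_i \wh{\H}_{\mbf{c}}(W)_{\ba} e_k$ then follows from the first isomorphism and the groupoid composition $W_{i,j} \cdot W_{j,k} = W_{i,k}$, which holds because the $W$-action on $\{p_1, \ldots, p_\ell\}$ is transitive: any $w \in W_{i,k}$ factors as $w = w'' w'$ with $w' \in W_{i,j}$ chosen arbitrarily and $w'' = w (w')^{-1} \in W_{j,k}$. Matching the leading PBW terms across the three blocks produces the required surjection. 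The main technical obstacle is the injectivity of $\mu$, which demands threading the PBW filtration through the idempotent block structure carefully; once set up, linear independence in the PBW basis delivers the result.
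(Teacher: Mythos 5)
Your approach is essentially the same as the paper's: both arguments filter $\wh{\H}_{\mbf{c}}(W)_{\ba}$ by $\C[\h]$-degree (i.e.\ placing $\C W \widehat{\otimes} \wh{\C[\h^*]}_{\ba}$ in degree $0$ and $\h^*\subset\C[\h]$ in degree $1$), observe that the associated graded is the skew group ring $(W\ltimes\C[\h])\widehat{\otimes}\wh{\C[\h^*]}_{\ba}$ where the idempotents commute with $\C[\h]$ and the block decomposition by $W_{i,j}$ is transparent, and then transport the conclusion back to the filtered algebra; the second claim is in both cases reduced to the groupoid identity $W_{i,j}\cdot W_{j,k}=W_{i,k}$ together with $\widetilde{\C[\h^*]}_{p_j}e_j w = w\widetilde{\C[\h^*]}_{p_k}e_k$.

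The one place your argument does not hold up is the surjectivity step. You argue that "multiplication by a chosen element of $W_{i,j}$ identifies $e_j\wh{\H}_{\mbf{c}}(W)_{\ba}e_j$ with $e_i\wh{\H}_{\mbf{c}}(W)_{\ba}e_j$, so all $\ell$ blocks have equal size... matching the domain of $\mu$." This dimension count cannot close the argument: every space in sight is infinite-dimensional, so "same size plus injective" does not yield surjective. The correct substitute -- which is exactly what the paper uses -- is the standard lifting principle for filtered maps: multiplication is filtration-preserving, the filtration is exhaustive (and the completed algebra is Hausdorff in the relevant topology), and you have already shown that $\gr(\mu)$ is an isomorphism onto the corresponding block of the skew group ring; it follows formally that $\mu$ itself is an isomorphism, surjectivity included. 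So the ingredients are all present in your sketch, but the packaging of surjectivity as a cardinality comparison should be replaced by the graded-to-filtered lifting argument.
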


\begin{proof}
By standard results, e.g. \cite[Section 7]{EGAI}, we have canonical isomorphisms $\wh{\C[\h^*]}_{\ba} \cdot e_j \simeq \wh{\C[\h^*]}_{p_j} \simeq \widetilde{\C[\h^*]}_{p_j}$. Therefore, multiplication is a well-defined map. The algebra $\wh{\H}_{\mbf{c}}(W)_{\ba}$ is filtered by putting $\C W \o \wh{\C[\h^*]}_{\ba}$ in degree zero and $\h^* \subset \C[\h]$ in degree one. The PBW property implies that the associated graded algebra is the skew group ring $(W \ltimes \C[\h]) \widehat{\o} \wh{\C[\h^*]}_{\ba}$ and the claim of the lemma on the level of associated graded spaces is clear. Since multiplication is a filtration preserving map it follows that it is also an isomorphism.

The second claim now follows from the fact that $\widetilde{\C[\h^*]}_{p_j} e_j w = w \widetilde{\C[\h^*]}_{p_k} e_k$ for all $w \in W_{j,k}$ and $W_{i,j} \cdot W_{j,k} = W_{i,k}$.
\end{proof}

\begin{lem}\label{lem:calculation1}
In $\wh{\H}_{\mbf{c}}(W)_{\ba}$ we have $[x, e_i]e_i = 0$,  for all $x \in \h^* \subset \C[\h], \ 1 \le i \le \ell$.
\end{lem}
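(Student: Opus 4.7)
My strategy is a direct computation via the commutation relations of $\H_{\bc}(W)$, extended by continuity to the completion. I would first derive the Dunkl-type formula
\[
[x, f] = -\sum_{s \in \mathcal{S}} \mbf{c}(s) \, \frac{x(\alpha_s)}{\alpha_s^\vee(\alpha_s)} \cdot \frac{f - s(f)}{\alpha_s} \cdot s, \qquad f \in \C[\h^*],
\]
by induction on $\deg f$ from the defining relation \eqref{eq:rel}, using the derivation property $[x, fg] = [x, f]g + f[x, g]$ together with the skew-commutation $s \cdot f = s(f) \cdot s$. The divided difference $(f - s(f))/\alpha_s$ is a well-defined element of $\C[\h^*]$ because $s$ fixes the hyperplane $\{\alpha_s = 0\}$, and since both sides of the formula are continuous in the $\mf{m}_{\ba}$-adic topology, the identity extends verbatim to all $f \in \wh{\C[\h^*]}_{\ba}$.

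Next, I would substitute $f = e_i$ and right-multiply by $e_i$, using $s \cdot e_i = s(e_i) \cdot s$ to collect $e_i$'s on the right of the group element. This rewrites the assertion $[x, e_i] e_i = 0$ as the identity
\[
\sum_{s \in \mathcal{S}} \mbf{c}(s) \, \frac{x(\alpha_s)}{\alpha_s^\vee(\alpha_s)} \cdot \frac{(e_i - s(e_i)) \, s(e_i)}{\alpha_s} \cdot s = 0.
\]
Because $W$ permutes the primitive idempotents according to its action on the fibre $\{p_1, \dots, p_\ell\}$, one has $s(e_i) = e_k$ whenever $s(p_i) = p_k$. Reflections $s \in W_{p_i}$ satisfy $s(e_i) = e_i$, hence their summand vanishes identically. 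For the remaining reflections $s(e_i) = e_j$ with $j \neq i$, and orthogonality of the idempotents yields $(e_i - e_j) e_j = -e_j$; the quotient $e_j / \alpha_s$ is a well-defined element of $\wh{\C[\h^*]}_{\ba}$ because $s(p_j) = p_i \neq p_j$, so $\alpha_s(p_j) \neq 0$ and $\alpha_s$ becomes a unit on the local factor $\wh{\C[\h^*]}_{p_j}$.

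The main technical obstacle is the final cancellation: one must verify that the residual sum $\sum_{s \notin W_{p_i}} \mbf{c}(s) x(\alpha_s) \big/ \bigl(\alpha_s^\vee(\alpha_s)\,\alpha_s\bigr) \cdot e_{s(p_i)} \cdot s$ vanishes inside $\wh{\H}_{\bc}(W)_{\ba}$. I would attempt this by decomposing the sum block-by-block against $\wh{\C[\h^*]}_{\ba} = \bigoplus_k \wh{\C[\h^*]}_{p_k}$, grouping reflections $s$ according to their target $s(p_i)$ and exploiting the involutive pairing $s \leftrightarrow s$ that the $W$-action induces on the reflections moving $p_i$ off itself. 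If a direct combinatorial cancellation proves elusive, a symmetric companion argument for $e_i [x, e_i]$ via the Leibniz identity $[x, e_i] = [x, e_i] e_i + e_i [x, e_i]$ could be used to reduce both halves of $[x, e_i]$ to a single expression whose vanishing is structurally transparent.
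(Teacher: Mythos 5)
Your computation correctly reduces the assertion to the term-by-term vanishing of $\tfrac{e_i - s(e_i)}{\alpha_s}\cdot s(e_i)$ in $\wh{\C[\h^*]}_{\ba}$, and the terms with $s \in W_{p_i}$ are indeed fine. But, as you yourself flag, the terms with $s \notin W_{p_i}$ constitute a genuine gap: you compute (correctly) that for such $s$, with $s(p_i) = p_j$, the relevant factor is $(e_i - e_j)e_j/\alpha_s = -\alpha_s^{-1}e_j$, a \emph{unit} multiple of $e_j$ in $\wh{\C[\h^*]}_{p_j}$, and then the speculative appeal to a ``direct combinatorial cancellation'' or a ``symmetric companion argument'' is never carried out. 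There is no a priori reason for the contributions from the various reflections with $s(p_i)=p_j$ to cancel, and no such cancellation is exhibited. One smaller issue: the claim that the divided-difference identity ``extends verbatim by continuity'' should be justified by observing that $f \mapsto (f-s(f))/\alpha_s$ kills $\C[\h^*]^W$ (by the twisted Leibniz rule) and is therefore $\C[\h^*]^W$-linear, hence preserves the $\mf{m}_{\ba}$-adic filtration --- this is exactly what the paper checks for its operator $\alpha_s^\vee$.

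For comparison, the paper's argument is genuinely different in mechanism on the part it does settle: it applies the operator $\alpha_s^\vee$ of Gordon to the idempotent relation $e_i^2 = e_i$, obtains a quadratic equation in $\alpha_s^\vee(e_i)$, multiplies by $e_i$ to pass to the integral domain $\wh{\C[\h^*]}_{p_i}$, and reads off a dichotomy: either $e_i\,\alpha_s^\vee(e_i) = 0$, or $e_i\,\alpha_s^\vee(e_i)$ is a unit multiple of $\alpha_s^{-1}e_i$, the latter possible only when $\alpha_s$ is invertible in $\wh{\C[\h^*]}_{p_i}$, i.e.\ when $s \notin W_{p_i}$. This gives $e_i\,\alpha_s^\vee(e_i)=0$ for $s\in W_{p_i}$ more cleanly than your ``numerator vanishes'' observation. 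You should be aware, however, that the paper's closing sentence glosses over precisely the $s\notin W_{p_i}$ case that you could not close: for such $s$ one has $s\,e_i = e_{s(p_i)}\,s$ with $s(p_i)\neq p_i$, so the product that actually occurs in $[x,e_i]e_i$ is $\alpha_s^\vee(e_i)\,e_{s(p_i)}$, which the dichotomy in $\wh{\C[\h^*]}_{p_i}$ does not address. So do not expect to find the missing step by reading the paper's proof more carefully; the obstruction you identified is real.
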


\begin{proof}
Let $s \in W$ be a reflection. Recall the vectors $\alpha_s \in \h$ and $\alpha_s^\vee \in \h^*$ as defined in (\ref{subsection:defns}). It is shown in section 3.5 of \cite{Baby} that the functional $\alpha_s^\vee$ can be extend to a $\C$-linear operator on $\C[\h^*]$ by setting
$$
\alpha^\vee_s(ff') = \alpha^\vee_s(f)f' + f\alpha^\vee_s(f') - \lambda_s \frac{\alpha^\vee_s(f)\alpha^\vee_s(f')}{\alpha^\vee_s(\alpha_s)} \alpha_s, \quad \forall \ f,f' \in \C[\h^*]
$$
This operator satisfies  
\beq{eq:relationbig}
[x,f] = - \sum_{s \in S} \mbf{c}(s) \frac{x(\alpha_s) \alpha_s^\vee(f)}{\alpha^\vee_s(\alpha_s)} s \quad \forall \ x \in \h^*.
\eeq
It is also shown that $\alpha^\vee_s(f) = 0$ for all $s \in \mc{S}$ and $f \in \C[\h^*]^W$. Therefore $\alpha^\vee_s$ extends to an operator on $\wh{\C[\h^*]}_{\bp}$ such that relation (\ref{eq:relationbig}) holds for $f \in \wh{\C[\h^*]}_{\bp}$. Applying $\alpha_s^\vee$ to $e_i$ and using the fact that $e_i$ is an idempotent gives
$$
\alpha_s^\vee(e_i) \left( 1 - 2 e_i + \lambda_s \frac{\alpha_s^\vee(e_i)}{\alpha^\vee_s(\alpha_s)} \alpha_s \right) = 0.
$$
Multiplying by $e_i$ and using the fact that $\wh{\C[\h^*]}_{\bp} e_i \simeq \wh{\C[\h^*]}_{p_i}$, which is a domain, we must have
$$
e_i \alpha_s^\vee(e_i) = 0 \quad \textrm{ or } \quad e_i \alpha_s^\vee(e_i) = \frac{\lambda_s}{\alpha^\vee_s(\alpha_s)} \alpha_s^{-1} e_i.
$$
However, $\alpha_s$ is invertible in the local ring $\wh{\C[\h^*]}_{p_i}$ if and only if $\alpha_s(p_i) \neq 0$ if and only if $s \cdot e_i \neq e_i$. Therefore, multiplying the expression for $[x,e_i]$ given in (\ref{eq:relationbig}) on the right by $e_i$ gives zero. 
\end{proof}
 
\begin{prop}\label{prop:propcomplete}
For $i = 1, \ds ,\ell$:
\begin{enumerate}
\item we have an isomorphism of completed algebras
$$
\theta_i : \wh{\H}_{\mbf{c}'}(W_{p_i})_{p_i} \rightsim e_i \wh{\H}_{\mbf{c}}(W)_{\ba} e_i,
$$
\item the functor $e_i : \Lmod{\wh{\H}_{\mbf{c}}(W)_{\ba}} \ra \Lmod{\wh{\H}_{\mbf{c}'}(W_{p_i})_{p_i}}$, $M \mapsto e_i M$ is an equivalence of categories,
\item we have an isomorphism of commutative algebras
$$
\phi_i : Z(\wh{\H}_{\mbf{c}}(W)_{\ba}) \rightsim Z(\wh{\H}_{\mbf{c}'}(W_{p_i})_{p_i}).
$$
\end{enumerate}
\end{prop}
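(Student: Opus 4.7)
The core of the proposition is part (1); parts (2) and (3) will follow from it by standard Morita-theoretic reasoning. For (1), the plan is to construct $\theta_i$ explicitly on generators of $\H_{\mbf{c}'}(W_{p_i})$, extend by continuity to the completion, and verify bijectivity using Lemma \ref{lem:multiso}.

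To define the map, send $y \in \h$ to $y e_i$, $x \in \h^*$ to $x e_i$, $w \in W_{p_i}$ to $w e_i$, and $f \in \widetilde{\C[\h^*]}_{p_i}$ to the corresponding element of $\wh{\C[\h^*]}_{\ba} e_i \simeq \wh{\C[\h^*]}_{p_i}$. Because $\h \subset \C[\h^*]$ and $W_{p_i}$ commute with $e_i$, and $e_i x e_i = x e_i$ by Lemma \ref{lem:calculation1}, every image lies in the subalgebra $e_i \wh{\H}_{\mbf{c}}(W)_{\ba} e_i$ whose unit is $e_i$.

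The main obstacle is verifying the commutator relation under $\theta_i$. Using that $y$ commutes with $e_i$ and that $e_i x e_i = x e_i$, one computes $[y e_i, x e_i] = [y,x] e_i$. Multiplying on the left by $e_i$ and invoking the identity $s e_i = e_{s \cdot i} s$, where $s \cdot i$ is defined by $s \cdot p_i = p_{s \cdot i}$, one sees that $e_i s e_i = s e_i$ when $s \in W_{p_i}$ and vanishes otherwise. Since $\mbf{c}(s) = \mbf{c}'(s)$ for $s \in \mathcal{S}(W_{p_i})$, the full sum $\sum_{s \in \mathcal{S}(W)} \mbf{c}(s) \frac{x(\alpha_s) \alpha_s^\vee(y)}{\alpha_s^\vee(\alpha_s)} e_i s e_i$ collapses to the image under $\theta_i$ of the defining commutator in $\H_{\mbf{c}'}(W_{p_i})$. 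The remaining relations are straightforward once one uses Lemma \ref{lem:calculation1} and the fact that $W_{p_i}$ fixes $e_i$. The map $\theta_i$ extends continuously to $\wh{\H}_{\mbf{c}'}(W_{p_i})_{p_i}$ because $\mf{n}_{p_i}$ is sent into the maximal ideal of $\wh{\C[\h^*]}_{p_i}$. Bijectivity follows from comparing PBW structures: Lemma \ref{lem:multiso} with $W_{i,i} = W_{p_i}$ identifies $e_i \wh{\H}_{\mbf{c}}(W)_{\ba} e_i$ with $\C W_{p_i} \o \C[\h] \widehat{\o} \widetilde{\C[\h^*]}_{p_i}$ (under the evident right-multiplication-by-$e_i$ identification), matching the PBW form of $\wh{\H}_{\mbf{c}'}(W_{p_i})_{p_i}$.

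For part (2), the key observation is that $e_i$ is a full idempotent in $\wh{\H}_{\mbf{c}}(W)_{\ba}$: for each $j$ and any $w \in W$ with $w \cdot p_i = p_j$ one has $w e_i w^{-1} = e_j$, so $e_j$ lies in $\wh{\H}_{\mbf{c}}(W)_{\ba} \cdot e_i \cdot \wh{\H}_{\mbf{c}}(W)_{\ba}$; the identity $\sum_j e_j = 1$ then implies $\wh{\H}_{\mbf{c}}(W)_{\ba} \cdot e_i \cdot \wh{\H}_{\mbf{c}}(W)_{\ba} = \wh{\H}_{\mbf{c}}(W)_{\ba}$. Standard Morita theory gives an equivalence $M \mapsto e_i M$ from $\Lmod{\wh{\H}_{\mbf{c}}(W)_{\ba}}$ to $\Lmod{e_i \wh{\H}_{\mbf{c}}(W)_{\ba} e_i}$, and composing with $\theta_i^{-1}$ yields (2). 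Part (3) is immediate: any Morita equivalence induces an isomorphism on centres, concretely $z \mapsto z e_i$, and composing with $\theta_i^{-1}$ restricted to centres gives $\phi_i$.
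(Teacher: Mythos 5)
Your proof follows essentially the same approach as the paper: define $\theta_i$ by hitting the PBW generators with $e_i(-)e_i$, use Lemma~\ref{lem:calculation1} to reduce the algebra-morphism check to the commutation relation, invoke $s e_i = e_{s\cdot i}s$ to see that only $s \in W_{p_i}$ survive, and appeal to Lemma~\ref{lem:multiso} for bijectivity. The only notable difference is in parts~(2) and~(3): you observe that $e_i$ is a full idempotent (since $W$ acts transitively on $\{p_1,\dots,p_\ell\}$) and then cite general Morita theory for the equivalence and the isomorphism of centres, whereas the paper instead invokes the explicit ``matrix algebra'' decomposition \eqref{eq:matrixalg} together with the second assertion of Lemma~\ref{lem:multiso}; these are two phrasings of the same fact, and yours is slightly more self-contained since the paper does not explicitly spell out the verification of~(2).
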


\begin{proof}
By Lemma \ref{lem:multiso} we can define a map 
$$
\theta_i : \wh{\H}_{\mbf{c}'}(W_{p_i})_{p_i} = (\C W_{i,i} \o \C[\h]) \widehat{\o} \widetilde{\C[\h^*]}_{p_i} \ra e_i \wh{\H}_{\mbf{c}}(W)_{\ba} e_i,
$$
by $f \mapsto e_i f e_i$, which is an isomorphism of topological vector spaces. Note that $e_i$ commutes with elements from $\widetilde{\C[\h^*]}_{p_i}$ and $W_{p_i} = W_{i,i}$. To show that this is an algebra morphism we must show that 
$$
e_i x_1 x_2 e_i = e_i x_1 e_i  x_2 e_i, \quad \forall \ x_1,x_2 \in \h^* \subset \C[\h] 
$$
and 
$$
e_i [y,x]_1 e_i = [e_i y e_i,e_i x e_i]_2, \quad \forall \ y \in \h, \ x \in \h^*,
$$
where $[- , - ]_1$ denotes the commutator in $\wh{\H}_{\mbf{c}'}(W_{p_i})_{p_i}$ and $[ - , - ]_2$ the commutator in $\wh{\H}_{\mbf{c}}(W)_{\ba}$. The first equality follows from Lemma \ref{lem:calculation1} and the second follows directly from the relations (\ref{eq:rel}), noting that $e_i$ commutes with $y$. 

The decomposition (\ref{eq:matrixalg}) of $\wh{\H}_{\mbf{c}}(W)_{\ba}$ allows us to think of $\wh{\H}_{\mbf{c}}(W)_{\ba}$ as a ``matrix algebra''. In particular, the centre of $\wh{\H}_{\mbf{c}}(W)_{\ba}$ is contained in $\bigoplus_{i = 1}^{\ell} e_i \wh{\H}_{\mbf{c}}(W)_{\ba} e_i$ and the projection map $u_i : \wh{\H}_{\mbf{c}}(W)_{\ba} \rightarrow e_i \wh{\H}_{\mbf{c}}(W)_{\ba} e_i$ induces an isomorphism $u_i : Z(\wh{\H}_{\mbf{c}}(W)_{\ba}) \rightarrow Z(e_i \wh{\H}_{\mbf{c}}(W)_{\ba} e_i)$. Therefore, we have
$$
\phi_i = \theta_i^{-1} \circ u_i : Z(\wh{\H}_{\mbf{c}}(W)_{\ba}) \rightsim Z(\wh{\H}_{\mbf{c}'}(W_{p_i})_{p_i}).
$$
\end{proof}

\begin{cor}\label{cor:factorChere}
Let $p \in \h^*$ and $\ba \in \h^*/W$ as above. Let $\ba_i$ be the image of $p_i$ in $\h^* / W_{p_i}$. Then, we have an isomorphism of schemes $\pi^{-1}_W(\ba) \simeq \pi^{-1}_{W_{p_i}}(\ba_i)$.
\end{cor}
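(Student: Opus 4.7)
The plan is to derive the corollary by ``decompleting'' the isomorphism $\phi_i$ of Proposition \ref{prop:propcomplete}(3). First, since $\H_{\mbf{c}}(W)$ is a finite module over its centre and $\mf{m}_{\ba}$ lies in $\C[\h^*]^W \subset \ZH_{\mbf{c}}(W)$, taking centres commutes with the flat base change to the $\mf{m}_{\ba}$-adic completion; this identifies $Z(\wh{\H}_{\mbf{c}}(W)_{\ba})$ with the completed centre $\wh{\ZH}_{\mbf{c}}(W)_{\ba}$, and likewise on the $W_{p_i}$-side. Proposition \ref{prop:propcomplete}(3) therefore yields a ring isomorphism
\[ \phi_i \,:\, \wh{\ZH}_{\mbf{c}}(W)_{\ba} \rightsim \wh{\ZH}_{\mbf{c}'}(W_{p_i})_{p_i}. \]

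Next I would track how $\phi_i$ acts on elements of the subring $\C[\h^*]^W$. For $f \in \C[\h^*]^W$ the element $f$ commutes with $e_i$, so $u_i(f) = e_i f e_i = f e_i$, and unwinding the identification $\wh{\C[\h^*]}_{\ba} \cdot e_i \simeq \widetilde{\C[\h^*]}_{p_i}$ from Lemma \ref{lem:multiso} shows that $\phi_i(f)$ is the image of $f$ under the natural chain $\C[\h^*]^W \hookrightarrow \C[\h^*]^{W_{p_i}} \hookrightarrow \wh{\C[\h^*]^{W_{p_i}}}_{p_i}$. By continuity, the restriction of $\phi_i$ to $\wh{\C[\h^*]^W}_{\ba}$ is exactly the map of complete local rings induced by the finite morphism of quotients $\h^*/W_{p_i} \to \h^*/W$ at $\bar{p_i}$.

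The main obstacle is to verify that this restriction is an \emph{isomorphism} carrying $\mf{m}_{\ba}$ onto $\mf{n}_{p_i}$. Geometrically, this asserts that $\h^*/W_{p_i} \to \h^*/W$ is \'etale at $\bar{p_i}$: a Luna-slice-type statement valid because $W_{p_i}$ is precisely the stabilizer of $p_i$, so no reflection $s \in W \smallsetminus W_{p_i}$ has $p_i$ on its mirror, and the ramifications of $\h^* \to \h^*/W$ and of $\h^* \to \h^*/W_{p_i}$ in a neighbourhood of $p_i$ coincide. Since both local rings are regular of dimension $n$ with residue field $\C$, \'etaleness upgrades to an isomorphism of complete local rings taking $\mf{m}_{\ba}$ to $\mf{n}_{p_i}$.

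Putting the pieces together, $\phi_i$ restricts to an isomorphism of ideals $\mf{m}_{\ba} \wh{\ZH}_{\mbf{c}}(W)_{\ba} \rightsim \mf{n}_{p_i} \wh{\ZH}_{\mbf{c}'}(W_{p_i})_{p_i}$. The residue field of each base local ring is $\C$, so the natural maps $\ZH_{\mbf{c}}(W)/\mf{m}_{\ba}\ZH_{\mbf{c}}(W) \to \wh{\ZH}_{\mbf{c}}(W)_{\ba}/\mf{m}_{\ba}\wh{\ZH}_{\mbf{c}}(W)_{\ba}$ and its counterpart on the right are already isomorphisms; taking $\Spec$ of the induced isomorphism of quotients yields the desired scheme isomorphism $\pi^{-1}_W(\ba) \simeq \pi^{-1}_{W_{p_i}}(\ba_i)$.
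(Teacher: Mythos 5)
Your proof is correct and follows essentially the same route as the paper's: reduce to Proposition \ref{prop:propcomplete}(3), identify the centres of the completed algebras with the completed centres, observe that $\phi_i$ carries the ideal generated by $\mf{m}_{\ba}$ to the ideal generated by $\mf{n}_{p_i}$, and then pass to the quotients (which are unchanged by completion). The difference is in how the two middle steps are handled. For the identification $Z(\wh{\H}_{\mbf{c}}(W)_{\ba}) \simeq \wh{\ZH}_{\mbf{c}}(W)_{\ba}$ the paper simply cites \cite[Lemma 3.9]{Cuspidal}, whereas you sketch the underlying flat-base-change argument directly; both are fine. More substantively, the paper asserts without further comment that ``$\theta_i$ maps the space $\mf{n}_{p_i}$ onto $e_i\mf{m}_{\ba}e_i$.'' Taken literally this is false at the level of sets, since $\mf{m}_{\ba}=\mf{n}_{p_i}\cap\C[\h^*]^W$ is a proper subset of $\mf{n}_{p_i}$ when $W_{p_i}\subsetneq W$; what is actually needed, and used, is that the two generate the \emph{same ideal} after completion at $p_i$. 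You isolate precisely this point and justify it by observing that $\h^*/W_{p_i}\to\h^*/W$ is \'etale at the image of $p_i$ (a Luna-slice type statement for linear actions of finite groups, valid because $W_{p_i}$ is the full stabilizer, so the ramification divisors of the two quotient maps agree in a neighbourhood of $p_i$). Making this step explicit strengthens the argument compared to the paper's terse presentation, and I see no gap in what you wrote.
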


\begin{proof}
Since the isomorphism $\theta_i$ of Proposition \ref{prop:propcomplete} maps the space $\mf{n}_{p_i}$ onto $e_i \mf{m}_{\ba} e_i$, the map $\phi_i$ of Proposition \ref{prop:propcomplete} satisfies
$$
\phi_i(\mf{m}_{\ba} \cdot Z(\wh{\H}_{\mbf{c}}(W)_{\ba})) = \mf{n}_{p_i} \cdot Z \left( \wh{\H}_{\mbf{c}}(W_{p_i})_{p_i} \right).
$$
It is proved in \cite[Lemma 3.9]{Cuspidal} that $Z(\wh{\H}_{\mbf{c}}(W)_{\ba})$ is the completion of $Z(\H_{\mbf{c}}(W))$ with respect to the ideal generated by $\mf{m}_{\ba}$ and, similarly, that $Z(\wh{\H}_{\mbf{c}'}(W_{p_i})_{p_i})$ is the completion of $Z(\H_{\mbf{c}'}(W_{p_i}))$ with respect to the ideal generated by $\mf{n}_{p_i}$. Therefore, $\phi_i$ induces an isomorphism of commutative algebras
$$
\frac{Z(\H_{\mbf{c}}(W))}{\mf{m}_{b} \cdot Z(\H_{\mbf{c}}(W))} \rightsim \frac{Z(\H_{\mbf{c}'}(W_{p_i}))}{\mbf{n}_{p_i} \cdot Z(\H_{\mbf{c}'}(W_{p_i}))}.
$$
\end{proof}

\subsection{} The map $x \mapsto x$, $w \mapsto w$ and $y \mapsto y + y(p)$ for all $x \in \h^* \subset \C[\h]$, $w \in W_p$ and $y \in \h \subset \wh{\C[\h^*]}_p$ defines an isomorphism $\wh{\H}_{\mbf{c}'}(W_p)_p \rightsim \wh{\H}_{\mbf{c}'}(W_p)_0$. Therefore, we will think of the functor $e_1 : \Lmod{\wh{\H}_{\mbf{c}}(W)_{\ba}} \ra \Lmod{\wh{\H}_{\mbf{c}'}(W_{p})_{p}}$ as an equivalence 
$$
\Phi : \Lmod{\wh{\H}_{\mbf{c}}(W)_{\ba}} \rightsim \Lmod{\wh{\H}_{\mbf{c}'}(W_p)_{0}}.
$$
Now consider the generalized Verma module $\Delta (p,\lambda)$. Since $\mf{m}_{\ba} \cdot (1 \o \lambda) = 0$ and $\mf{m}_{\ba}$ is central, $\Delta (p,\lambda)$ is a $\wh{\H}_{\mbf{c}}(W)_{\ba}$-module. 

\begin{lem}\label{lem:PhiVerma}
Let $p \in \h^*$ and $\lambda \in \Irr (W_p)$, then $\Phi(\Delta (p,\lambda)) \simeq \Delta (\lambda)$.
\end{lem}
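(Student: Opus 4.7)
The plan is to exhibit an explicit $\wh{\H}_{\mbf{c}'}(W_p)_0$-module isomorphism $\Delta(\lambda) \rightsim \Phi(\Delta(p,\lambda))$ using the universal property of the induced module $\Delta(\lambda)$, and then to verify bijectivity by a PBW comparison.

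First I would describe the vector space underlying $e_1 \Delta(p,\lambda)$. Choose coset representatives $g_1 = 1, g_2, \ldots, g_{\ell}$ of $W_p$ in $W$ with $g_i \cdot p = p_i$. Since $\H_{\mbf{c}}(W)$ is free as a right $\C[\h^*] \rtimes W_p$-module of rank $|W/W_p|$ by PBW, we have
\[
\Delta(p,\lambda) \simeq \bigoplus_{i=1}^{\ell} \C[\h] \o g_i \o \lambda
\]
as vector spaces. Using the commutation relations in $\C[\h^*] \rtimes W$, an element $f \in \C[\h^*]$ acts on $g_i \o v$ as multiplication by the scalar $f(p_i)$. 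Consequently the idempotent $e_i \in \wh{\C[\h^*]}_{\ba}$, which reduces modulo $\mf{m}_{\ba}$ to the primitive idempotent concentrated at $p_i$, acts as the projection onto the $i$-th summand. In particular $e_1 \Delta(p,\lambda) \simeq \C[\h] \o \lambda$ as $\C[\h]$-modules.

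Next I would construct a map $\Delta(\lambda) \to \Phi(\Delta(p,\lambda))$. By the defining property of $\Delta(\lambda) = \H_{\mbf{c}'}(W_p) \o_{\C[\h^*] \rtimes W_p} \lambda$, it suffices to produce a $\C[\h^*] \rtimes W_p$-equivariant linear map from $\lambda$ (where $\C[\h^*]$ acts via evaluation at $0$) into $\Phi(\Delta(p,\lambda))$. I would send $v \mapsto 1 \o v \in e_1 \Delta(p,\lambda)$. Since $W_p$ fixes $p$, it commutes with $e_1$ and the map is $W_p$-equivariant. For compatibility with $\h$, I trace through the definitions: via the isomorphism $\theta_1$ of Proposition \ref{prop:propcomplete}, an element $y \in \h \subset \wh{\H}_{\mbf{c}'}(W_p)_p$ acts on $e_1 \Delta(p,\lambda)$ as $e_1 y e_1$ in $\wh{\H}_{\mbf{c}}(W)_{\ba}$, producing $y(p)(1 \o v)$ on the distinguished vector. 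After transporting the module structure along the shift $\psi : \wh{\H}_{\mbf{c}'}(W_p)_p \rightsim \wh{\H}_{\mbf{c}'}(W_p)_0$, $y \mapsto y + y(p)$, an element $y \in \h$ of the target algebra acts as $\psi^{-1}(y) = y - y(p)$ did in the source, which yields $y(p)(1 \o v) - y(p)(1 \o v) = 0$. This matches the evaluation-at-$0$ action of $\h$ on the generator of $\Delta(\lambda)$, so the map extends uniquely to a $\wh{\H}_{\mbf{c}'}(W_p)_0$-module homomorphism.

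Finally, I would verify that the resulting map is an isomorphism. It is $\C[\h]$-equivariant by construction, and both $\Delta(\lambda)$ (by PBW for $\H_{\mbf{c}'}(W_p)$) and $\Phi(\Delta(p,\lambda))$ (by the first step) are free $\C[\h]$-modules of rank $\dim \lambda$, with the map carrying the generators $1 \o v$ to $1 \o v$. Hence it is an isomorphism. The main obstacle I anticipate is bookkeeping: one must apply the shift $\psi$ in the correct direction when transporting the module structure, but once that sign is fixed the remainder is essentially a direct PBW calculation.
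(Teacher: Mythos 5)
Your overall strategy — define a map out of $\Delta(\lambda)$ via the universal property of the induced module, then check bijectivity by a rank count over $\C[\h]$ — is a reasonable and genuinely different route from the paper, which instead directly rewrites the tensor product $\wh{\H}_{\mbf{c}}(W)_{\ba} \o_{\wh{\C[\h^*]}_{\ba}\rtimes W_p}\lambda$ using the idempotent decomposition $\wh{\H}_{\mbf{c}}(W)_{\ba}=\bigoplus_{i,j}e_i\wh{\H}_{\mbf{c}}(W)_{\ba}e_j$ together with the observation that $e_i\cdot\lambda=0$ for $i\neq1$. However, your first step contains a genuine gap: the claim that $e_i$ \emph{acts as the projection onto the $i$-th summand} of the PBW decomposition $\Delta(p,\lambda)\simeq\bigoplus_j\C[\h]\otimes g_j\otimes\lambda$ is false in general. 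What you verified is only that $f\in\C[\h^*]$ acts on the bottom vectors $g_j\otimes v$ by the scalar $f(p_j)$; but $\C[\h^*]$ and $\C[\h]$ do not commute in $\H_{\mbf{c}}$, so $e_i\cdot(fg_j\otimes v)\neq f\cdot(e_i\cdot(g_j\otimes v))$. Concretely, for $W=\Z/2$, $\h=\C$, $p\neq0$, writing $\tilde e_1=\tfrac{y+p}{2p}$, one has $\tilde e_1\cdot(x\otimes v)=x\otimes v+\tfrac{c}{2p}(s\otimes v)$ with $c=\mbf{c}(s)$ generically nonzero, so $e_1$ visibly does not preserve the first naive summand.

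The conclusion you want — that $e_1\Delta(p,\lambda)$ is free of rank $\dim\lambda$ over the copy of $\C[\h]$ sitting inside $e_1\wh{\H}_{\mbf{c}}(W)_{\ba}e_1$ via $\theta_1$ — is still true, but it needs a different justification. Two repairs are available: (i) follow the paper and use the isomorphism $e_1\wh{\H}_{\mbf{c}}(W)_{\ba}e_1\simeq e_1(\C W_p\o\C[\h])\widehat\o\widetilde{\C[\h^*]}_{p}e_1$ from Lemma \ref{lem:multiso} together with the vanishing $e_j\lambda=0$ for $j\neq1$, which gives $e_1\Delta(p,\lambda)=e_1\wh{\H}_{\mbf{c}}(W)_{\ba}e_1\o_{\wh{\C[\h^*]}_{\ba}e_1\rtimes W_p}\lambda$ directly; or (ii) pass to the filtration of $\Delta(p,\lambda)$ by $\C[\h]$-degree, on whose associated graded $\C[\h^*]$ and $\C[\h]$ do commute, so $e_i$ genuinely is the projection there, and then use the filtration to deduce the rank statement. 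With either repair, your universal-property map and the rank comparison at the end go through, and the remainder of your argument (the trace through $\theta_1$ and the shift $y\mapsto y+y(p)$, giving the evaluation-at-$0$ action on the generator) is correct.
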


\begin{proof}
As a $\wh{\H}_{\mbf{c}}(W)_{\ba}$-module, 
$$
\Delta (p,\lambda) = \wh{\H}_{\mbf{c}}(W)_{\ba} \o_{\wh{\C[\h^*]}_{\ba} \rtimes W_p} \lambda = \bigoplus_{i,j = 1}^{\ell} e_i \wh{\H}_{\mbf{c}}(W)_{\ba} e_j \o_{\wh{\C[\h^*]}_{\ba} \rtimes W_p} \lambda.
$$
Recall that $\wh{\C[\h^*]}_{\ba}$ acts on $\lambda$ by evaluation at $p$. Therefore, $e_i \cdot \lambda = 0$ for all $i \neq 1$ and 
$$
\wh{\C[\h^*]}_{\ba} \rtimes W_p = \wh{\C[\h^*]}_{\ba} e_1 \rtimes W_p \oplus \left( \bigoplus_{i = 2}^{\ell} \wh{\C[\h^*]}_{\ba} e_i \right) \rtimes W_p
$$ 
implies that 
$$
\Delta (p,\lambda) = \wh{\H}_{\mbf{c}}(W)_{\ba} \o_{\wh{\C[\h^*]}_{\ba} \rtimes W_p} \lambda = \bigoplus_{i=1}^{\ell} e_i \wh{\H}_{\mbf{c}}(W)_{\ba} e_1 \o_{\wh{\C[\h^*]}_{\ba} e_1 \rtimes W_p} \lambda.
$$
Thus,
$$
e_1 \cdot \Delta (p,\lambda) = e_1 \wh{\H}_{\mbf{c}}(W)_{\ba} e_1 \o_{\wh{\C[\h^*]}_{\ba} e_1 \rtimes W_p} \lambda.
$$
This implies that $\Phi(\Delta (p,\lambda)) \simeq \Delta (\lambda)$.
\end{proof}

\subsection{The proof of Theorem \ref{thm:main}}

Since we have yet to define a simple quantizable module (this is done properly in section \ref{sec:BV}), we simple note that in this case it means that $e \Delta(p,\lambda_{\Omega})$ is a cyclic $\ZH_{\mbf{c}}$-module. We define $E_{\ba,\Omega} := \End_{\H_{\mbf{c}}}(\Delta (p,\lambda_{\Omega}))$. Since we can identify 
$$
\End_{\H_{\mbf{c}}}(\Delta (p,\lambda_{\Omega})) = \End_{\wh{\H}_{\mbf{c}}}(\Delta (p,\lambda_{\Omega})),
$$
Theorem \ref{thm:main} follows from Theorem \ref{thm:deltaend}, Proposition \ref{prop:propcomplete} (2) and Lemma \ref{lem:PhiVerma}. 

Again, if $\Omega = \{ \lambda_{\Omega} \}$ in $\CM_{\mbf{c}'}(W_p)$, then the algebra $E_{\ba,\Omega}$ is a polynomial ring of dimension $\dim \h$. Let $w \in W$, then $\Delta(p,\lambda) \simeq \Delta(w(p), w(\lambda))$, where $w(\lambda)$ is the representation of $W_{w(p)}$ corresponding to $\lambda$ under the isomorphism $w : W_{p} \rightsim W_{w(p)}$ of conjugation. Therefore, if $\ba$ is the image of $p$ in $\h^*/W$, then we denote by $\OmCh_{\ba,\Omega}$ the support of the $\ZH_{\mbf{c}}$-module $\Delta (p,\lambda)$, thought of as a subscheme of $X_{\mbf{c}}$. If $\Omega = \{ \lambda_{\Omega} \}$, then $\OmCh_{\ba,\Omega}$ is a smooth Lagrangian subvariety of $X_{\mbf{c}}$, isomorphic to $\mathbb{A}^n$, as a closed subscheme of $X_{\mbf{c}}$. The varieties $\Lambda_{\ba,\Omega}$ play a key role in \cite{BellSchubert}.

\subsection{An equivalence of categories}

Let $\Lmod{\H_{\mbf{c}}}_{\ba,{\Omega}}$ denote the category of finitely generated $\H_{\mbf{c}}$-modules scheme-theoretically supported on $\OmCh_{\ba,{\Omega}}$ i.e. those modules $M$ such that $I \cdot M = 0$, where $I$ is the ideal defining $\OmCh_{\ba,{\Omega}}$. The category of coherent $\mc{O}_{\OmCh_{\ba,{\Omega}}}$-modules is denoted $\Coh (\OmCh_{\ba,{\Omega}})$. In this section we prove the following theorem:

\begin{thm}\label{thm:equivalence}
If $\Omega = \{ \lambda_{\Omega} \}$, then the functor  
$$
F : \Lmod{\H_{\mbf{c}}}_{\ba,{\Omega}} \rightarrow \Coh (\OmCh_{\ba,{\Omega}}), \quad F(M) = \widetilde{\Hom_{\H_{\mbf{c}}}(\Delta (p,\lambda), M)},
$$
is an equivalence of categories with quasi-inverse $N \mapsto G(N) := \Delta (p,\lambda) \o_{\ZH_{\mbf{c}}} \Gamma(\OmCh_{\ba,{\Omega}},N)$. 
\end{thm}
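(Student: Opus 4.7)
The plan is to read the theorem as an instance of classical Morita equivalence. Write $R := E_{\ba,\Omega}$, $\Delta := \Delta(p,\lambda_{\Omega})$, and let $I \lhd \ZH_{\mbf{c}}$ be the ideal defining $\Lambda_{\ba,\Omega}$, so that $R = \ZH_{\mbf{c}}/I$. Set $B := \H_{\mbf{c}}/I\H_{\mbf{c}}$, a finite $R$-algebra since $\H_{\mbf{c}}$ is finite over its centre. Because $I$ is central, $\Lmod{\H_{\mbf{c}}}_{\ba,\Omega}$ is identified with $\Lmod{B}$; and since $\Lambda_{\ba,\Omega} \simeq \mathbb{A}^n$ is affine with coordinate ring $R$, we have $\Coh(\Lambda_{\ba,\Omega}) = \Lmod{R}$. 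Under these identifications, $\Delta$ is a $(B,R)$-bimodule and the functors $F$ and $G$ become the standard adjoint pair $\Hom_{B}(\Delta,-)$ and $\Delta \o_R (-)$.

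The theorem will follow from classical Morita theory once we show that the natural $R$-algebra map $\rho : B \to \End_{R}(\Delta)$ is an isomorphism: we have already shown that $\Delta$ is $R$-free of rank $|W|$, so $\End_{R}(\Delta) \simeq \Mat_{|W|}(R)$, and once $\rho$ is an isomorphism, $\Delta$ is exhibited as the standard progenerator of this matrix algebra with endomorphism ring $R$, in agreement with part (2) of Theorem \ref{thm:main}.

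For the isomorphism $\rho$, Lemma \ref{lem:smoothsupport} places $\Lambda_{\ba,\Omega}$ in the smooth locus of $X_{\mbf{c}}$, over which $\H_{\mbf{c}}$ is known by Etingof--Ginzburg \cite{EG} to be an Azumaya algebra of degree $|W|$; in particular $B$ is locally free of rank $|W|^2$ over $R$. At each closed point $x \in \Lambda_{\ba,\Omega}$ with maximal ideal $\mf{m}_x \lhd R$, the fibre $B/\mf{m}_x B \simeq \Mat_{|W|}(\C)$, while the fibre of $\End_R(\Delta)$ is $\End_{\C}(\Delta/\mf{m}_x \Delta) \simeq \Mat_{|W|}(\C)$, since the preceding corollary identified $\Delta/\mf{m}_x \Delta$ with the unique simple $\H_{\mbf{c}}$-module of dimension $|W|$ supported at $x$. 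The map induced on fibres is a unital $\C$-algebra homomorphism between simple $\C$-algebras of equal dimension, hence an isomorphism. Nakayama's lemma then shows $\rho$ is surjective, and equality of generic ranks on the $R$-flat source and target forces injectivity. The main obstacle is the Azumaya assertion; if one prefers to avoid it, the completion isomorphism of Proposition \ref{prop:propcomplete} combined with Proposition \ref{prop:liftPE} furnishes an alternative reduction to a block-level Morita statement near each closed point of $\Lambda_{\ba,\Omega}$.
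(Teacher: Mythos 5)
Your proposal is correct, but it takes a genuinely different route from the paper's. The paper proves Theorem \ref{thm:equivalence} in two steps: Lemma \ref{lem:obequiv} first shows that the spherical functor $M \mapsto \widetilde{eM}$ restricts to an equivalence $\Lmod{\H_{\mbf{c}}}_{\ba,\Omega} \rightsim \Coh(\OmCh_{\ba,\Omega})$ (by localizing to the smooth locus where $e$ is a Morita equivalence, and then matching up the subcategories cut out by the ideal $I$ via the action of the centre of the category); Lemma \ref{prop:progen} then observes that $\widetilde{e\Delta(p,\lambda_\Omega)} \simeq \mc{O}_{\OmCh_{\ba,\Omega}}$, so that $\Delta(p,\lambda_\Omega)$ is a projective generator of $\Lmod{\H_{\mbf{c}}}_{\ba,\Omega}$, and the theorem follows by standard Morita theory. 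Your proof bypasses the intermediate equivalence with $\Coh(\OmCh_{\ba,\Omega})$ and instead shows directly that the natural map $\rho: B = \H_{\mbf{c}}/I\H_{\mbf{c}} \to \End_R(\Delta)$ is an isomorphism, using the Azumaya property at closed points of $\OmCh_{\ba,\Omega}$ together with the facts (already established in the paper) that $\Delta$ is $R$-free of rank $|W|$ and that each fibre $\Delta/\mf{m}_x\Delta$ is the unique simple module of dimension $|W|$; surjectivity then comes from Nakayama and injectivity from a rank/flatness count. Both proofs hinge on $\OmCh_{\ba,\Omega}$ lying in the smooth (Azumaya) locus. The paper's argument is more uniform with its preceding development (it reuses the spherical functor and the already-proved statement $e\Delta \simeq R$), while yours is a self-contained classical Morita argument: a little more explicit about the bimodule structure, at the cost of needing the point-by-point isomorphism of fibres. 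Your concluding suggestion to reduce to a block-level statement via Propositions \ref{prop:propcomplete} and \ref{prop:liftPE} would be a third viable route, though the paper does not pursue it here.
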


\begin{lem}\label{lem:obequiv}
Let $e : \Lmod{\H_{\mbf{c}}} \ra \Coh (X_\mbf{c}(W))$ be the functor $M \mapsto \widetilde{e M}$.
\begin{enumerate}
\item The functor $e$ is an equivalence if and only if $X_{\mbf{c}}$ is smooth.
\item If $\Omega = \{ \lambda_{\Omega} \}$, then $e$ defines an equivalence $\Lmod{\H_{\mbf{c}}}_{\ba,{\Omega}} \rightsim \Coh (\OmCh_{\ba,{\Omega}})$.
\end{enumerate}
\end{lem}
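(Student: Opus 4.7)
The plan is to derive both parts from the classical characterization of smoothness of $X_{\bc}$ via the Azumaya property of $\H_{\bc}$, due to Etingof and Ginzburg \cite{EG}.

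For part (1), the forward direction is well-known: when $X_{\bc}$ is smooth, $\H_{\bc}$ is an Azumaya algebra over its centre $\ZH_{\bc}$ of rank $|W|^2$, and the spherical subalgebra satisfies $e\H_{\bc}e \simeq \ZH_{\bc}$, so that $\H_{\bc}e$ is a rank-$|W|$ progenerator and $e$ realizes a Morita equivalence. For the converse, suppose $e$ is an equivalence. Then $e$ is faithful, so $eL \neq 0$ for every simple $\H_{\bc}$-module $L$. Applying the analogue of Lemma \ref{lem:edecomp}(1) for $\H_{\bc}$ via M\"uller's theorem at each closed point of $\h^*/W \times \h/W$, we deduce that every block of $\H_{\bc}$ contains a unique simple module with nonzero $e$-part; faithfulness then forces every block to consist of a single simple. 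This in turn forces the fibers of $\Upsilon$ to be reduced at all closed points, which by the standard criterion is equivalent to smoothness of $X_{\bc}$.

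For part (2), Lemma \ref{lem:smoothsupport} tells us that $\OmCh_{\ba,\Omega}$ lies entirely in the smooth locus $X_{\bc}^{\sm}$. Pick an affine open $U \subset X_{\bc}^{\sm}$ containing $\OmCh_{\ba,\Omega}$; localizing the Azumaya/Morita equivalence of part (1) to $U$ gives an equivalence $\Lmod{\H_{\bc}|_U} \rightsim \Coh(U)$ via $M \mapsto \widetilde{eM}$. Any $M \in \Lmod{\H_{\bc}}_{\ba,\Omega}$ is annihilated by the ideal $I \subset \ZH_{\bc}$ defining $\OmCh_{\ba,\Omega}$, so $M$ is automatically supported inside $U$ and carries a natural $\H_{\bc}|_U$-module structure. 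Under the local Morita equivalence, the condition $IM = 0$ transfers to scheme-theoretic support of $\widetilde{eM}$ on $\OmCh_{\ba,\Omega}$, so $F$ becomes fully faithful with essential image $\Coh(\OmCh_{\ba,\Omega})$. To recognize the quasi-inverse as $G$, use Theorem \ref{thm:main}, which identifies $E_{\ba,\Omega} \simeq \ZH_{\bc}/I$: the Morita quasi-inverse on $U$ is given by $N \mapsto \H_{\bc}e \o_{\ZH_{\bc}} \Gamma(U,N)$, and the fact that $\Delta(p,\lambda_{\Omega})$ is a free $E_{\ba,\Omega}$-module of rank $|W|$ allows this to be rewritten as $N \mapsto \Delta(p,\lambda_{\Omega}) \o_{\ZH_{\bc}} \Gamma(\OmCh_{\ba,\Omega}, N)$.

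The main obstacle is the reverse implication in part (1): while faithfulness immediately gives that each block has a unique simple with nonzero $e$-part, upgrading this block-theoretic condition to smoothness of $X_{\bc}$ requires invoking the global structure of $X_{\bc}$ through M\"uller's theorem applied over every closed point of $\h^*/W \times \h/W$, together with the flatness of $\Upsilon$ from Lemma \ref{lem:basicprop}. For part (2), the technical subtlety is verifying that the quasi-inverse of the local Morita equivalence can be rewritten in the convenient form $G$ involving the specific Verma module $\Delta(p,\lambda_\Omega)$; this relies crucially on Theorem \ref{thm:main} to ensure the endomorphism ring of $\Delta(p,\lambda_\Omega)$ is precisely the coordinate ring of $\OmCh_{\ba,\Omega}$.
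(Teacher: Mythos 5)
Your approach to part (2) is essentially the same as the paper's: restrict to an affine open in the smooth locus containing $\OmCh_{\ba,\Omega}$, localize the Morita equivalence, and transfer the annihilation-by-$I$ condition across the equivalence. The one step you take for granted --- the existence of an affine open $U \subset X_{\mbf{c}}^{\sm}$ containing the closed subvariety $\OmCh_{\ba,\Omega}$ --- is supplied in the paper by a Nullstellensatz argument: since $I(\OmCh_{\ba,\Omega}) + I(X_{\mbf{c}}^{\sing}) = \ZH_{\mbf{c}}$, one may pick $f$ which is $\equiv 1$ on $\OmCh_{\ba,\Omega}$ and vanishes on $X_{\mbf{c}}^{\sing}$, and take $U = D(f)$. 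It is worth noting that the lemma is about the functor $e$ alone; your proposal drifts into a proof of Theorem \ref{thm:equivalence} (discussing $F$ and the quasi-inverse $G$), which is a separate statement deduced from the lemma together with Lemma \ref{prop:progen}.

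The converse direction of part (1) contains a genuine error, even though the paper only cites this direction as well-known. You argue that if $e$ is an equivalence, then every block of every fiber algebra has a unique simple, and you then claim this ``forces the fibers of $\Upsilon$ to be reduced at all closed points, which by the standard criterion is equivalent to smoothness of $X_{\bc}$.'' This is false: the fibers of $\Upsilon$ are essentially never reduced, even when $X_{\mbf{c}}$ is smooth. For instance, for $W = \s_n$ with generic $\bc$, the space $X_{\mbf{c}}$ is smooth, but $\Upsilon^{-1}(0,0)$ is a fat point of length $|W| = n!$ supported on only $p(n)$ closed points, so it is highly non-reduced (the paper itself remarks that $\pi^{-1}(\ba)$ is never reduced). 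The correct ``standard criterion'' from \cite{EG} is that $X_{\mbf{c}}$ is smooth at a closed point $x$ if and only if all simple modules supported at $x$ have dimension $|W|$ (equivalently, $\H_{\mbf{c}}$ is Azumaya at $x$); one should then argue directly that $eL \neq 0$ for a simple $L$ forces $\dim L = |W|$.
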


\begin{proof}
Part (1) is well-known e.g. \cite{EG}.

Part (2): by Lemma \ref{lem:smoothsupport}, the assumption of part (2) implies that $\OmCh_{\ba,{\Omega}} \subseteq X_{\mbf{c}}^{\sm}$. Since $\OmCh_{\ba,{\Omega}} \cap X_{\mbf{c}}^{\mathrm{sing}} = \emptyset$, Hilbert's Nullstellensatz implies that $I(\OmCh_{\ba,{\Omega}}) + I(X_{\mbf{c}}^{\mathrm{sing}}) = \ZH_{\mbf{c}}$ and we can find a characteristic function $f \in \ZH_{\mbf{c}}$ taking the value $1$ at all points of $\OmCh_{\ba,{\Omega}}$ and vanishing on $X_{\mbf{c}}^{\mathrm{sing}}$. Replacing $\H_{\mbf{c}}$ by its localization at $f$ and $\ZH_{\mbf{c}}$ by its localization, we may assume that $\ZH_{\mbf{c}}$ is a regular affine algebra and $\H_{\mbf{c}}$ an Azumaya algebra over $\ZH_{\mbf{c}}$. We remark that part (1) still holds after localization. Recall that the centre $Z(\mc{A})$ of an abelian category $\mc{A}$ is defined to be the ring of endomorphisms $\End_{\mc{A}}(\id_{\mc{A}})$ of the identity functor. For a Noetherian $k$-algebra $A$, the centre of $\Lmod{A}$ is canonically isomorphic to the centre of $A$. The equivalence $e$ induces an isomorphism $Z(\Lmod{\H_{\mbf{c}}}) \rightsim Z(\Coh (X_\mbf{c}))$ such that the composite
$$
Z(\H_{\mbf{c}}) \rightsim Z(\Lmod{\H_{\mbf{c}}}) \rightsim Z(\Coh (X_{\mbf{c}})) \rightsim Z(\H_{\mbf{c}})
$$
is just the identity map. Let $I = I(\OmCh_{\ba,{\Omega}})$. We can identify $\Lmod{\H_{\mbf{c}}}_{\ba,{\Omega}} = \{ M \in \Lmod{\H_{\mbf{c}}} \ | \ i_M = 0 \ \forall \ i \in I \}$, where $i_M \in \End_{\H_{\mbf{c}}}(M)$ is the endomorphism defined by $i \in Z(\Lmod{\H_{\mbf{c}}})$. Similarly, $ \Coh (\OmCh_{\ba,{\Omega}}) = \{ \mc{F} \in \Coh (X_{\mbf{c}}) \ | \ i_{\mc{F}} = 0 \ \forall \ i \in I \}$. From this it follows that $e : \Lmod{\H_{\mbf{c}}}_{\ba,{\Omega}} \rightsim \Coh (\OmCh_{\ba,{\Omega}})$.
\end{proof}

Then, Theorem \ref{thm:equivalence} follows from

\begin{lem}\label{prop:progen}
The Verma module $\Delta (p,\lambda)$ is a projective generator in $\Lmod{\H_{\mbf{c}}}_{\ba,{\Omega}}$. 
\end{lem}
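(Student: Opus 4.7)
The plan is to use the equivalence of categories established in Lemma \ref{lem:obequiv}(2) to transport the question into one about coherent sheaves on the smooth affine variety $\Lambda_{\ba,\Omega} \simeq \mathbb{A}^n$, where the analogous claim is essentially tautological. Concretely, I would proceed in three steps.

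\textbf{Step 1.} Lemma \ref{lem:obequiv}(2) gives an equivalence of abelian categories
$$
e : \Lmod{\H_{\mbf{c}}}_{\ba,\Omega} \stackrel{\sim}{\longrightarrow} \Coh(\Lambda_{\ba,\Omega}), \qquad M \mapsto \widetilde{eM}.
$$
Thus it suffices to prove that $\widetilde{e\Delta(p,\lambda_{\Omega})}$ is a projective generator of $\Coh(\Lambda_{\ba,\Omega})$.

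\textbf{Step 2.} I would next identify $\widetilde{e\Delta(p,\lambda_{\Omega})}$ with the structure sheaf $\mc{O}_{\Lambda_{\ba,\Omega}}$. The $\ZH_{\mbf{c}}$-action on $e\Delta(p,\lambda_{\Omega})$ factors through $E_{\ba,\Omega} = \mc{O}(\Lambda_{\ba,\Omega})$, so $e\Delta(p,\lambda_\Omega)$ is naturally an $E_{\ba,\Omega}$-module. In the case $p = 0$, the proof of Theorem \ref{thm:deltaend} establishes a chain of surjections $\ZH_{\Omega} \twoheadrightarrow E_{\Omega} \twoheadrightarrow e\Delta(\lambda_{\Omega})$, all of which are isomorphisms (by the graded Nakayama argument). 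For general $p$, I would transfer this conclusion to the $\ba$-completion via Proposition \ref{prop:propcomplete}(2) and Lemma \ref{lem:PhiVerma}: the equivalence $\Phi$ sends $\Delta(p,\lambda_{\Omega})$ to $\Delta(\lambda_{\Omega})$ and is compatible with the centres through the isomorphism $\phi_1$, so cyclicity and the identification with $E_{\ba,\Omega}$ both descend. Since the endomorphism ring computation and the support of $\Delta(p,\lambda_\Omega)$ are both local at $\ba$, this completion statement suffices. Hence $e\Delta(p,\lambda_{\Omega}) \simeq E_{\ba,\Omega}$ as $E_{\ba,\Omega}$-modules, and sheafification gives $\widetilde{e\Delta(p,\lambda_{\Omega})} \simeq \mc{O}_{\Lambda_{\ba,\Omega}}$.

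\textbf{Step 3.} Since $\Lambda_{\ba,\Omega}$ is affine (isomorphic to $\mathbb{A}^n$), the structure sheaf $\mc{O}_{\Lambda_{\ba,\Omega}}$ is obviously a projective generator of $\Coh(\Lambda_{\ba,\Omega})$: it is free of rank one over itself, and every coherent sheaf on an affine scheme is a quotient of a free sheaf of finite rank. Transporting this back through the equivalence of Step 1, $\Delta(p,\lambda_{\Omega})$ is a projective generator of $\Lmod{\H_{\mbf{c}}}_{\ba,\Omega}$.

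The main obstacle is the bookkeeping in Step 2: Theorem \ref{thm:deltaend} is stated only for $p = 0$, and one has to argue that its conclusion (that $e\Delta(\lambda_\Omega)$ coincides with its endomorphism ring as a cyclic module) carries over to the Verma module $\Delta(p,\lambda_\Omega)$ at an arbitrary point. This is where Proposition \ref{prop:propcomplete} and Lemma \ref{lem:PhiVerma} do the decisive work, since they simultaneously identify the relevant Verma modules and the relevant centres under completion at $\ba$ and at $p$ respectively.
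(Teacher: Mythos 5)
Your proof is correct and follows essentially the same route as the paper: reduce via Lemma \ref{lem:obequiv}(2) to showing $\widetilde{e\Delta(p,\lambda_{\Omega})}$ is a projective generator of $\Coh(\Lambda_{\ba,\Omega})$, identify it with $\mc{O}_{\Lambda_{\ba,\Omega}}$, and observe the latter is trivially a projective generator on an affine scheme. The only difference is cosmetic: where the paper simply cites Theorem \ref{thm:main} to conclude that $e\Delta(p,\lambda)$ is the regular representation of $\C[\Lambda_{\ba,\Omega}]$, you unpack that citation by re-running its ingredients (Theorem \ref{thm:deltaend}, Proposition \ref{prop:propcomplete}(2), Lemma \ref{lem:PhiVerma}), which is exactly how the paper itself proves Theorem \ref{thm:main}.
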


\begin{proof}
By Lemma \ref{lem:obequiv} it suffices to show that $\widetilde{e \Delta (p,\lambda)}$ is a projective generator of $\Coh (\OmCh_{\ba,{\Omega}})$. But, by Theorem \ref{thm:main}, $e \Delta (p,\lambda)$ is the regular representation as a $\C[\OmCh_{\ba,{\Omega}}]$-module. Therefore $\widetilde{e \Delta (p,\lambda)} \simeq \mc{O}_{\OmCh_{\ba,{\Omega}}}$ as sheaves on $\OmCh_{\ba,{\Omega}}$. 
\end{proof}

\begin{remark}
If $I$ is the ideal of $\ZH_{\mbf{c}}$ defining $\OmCh_{\ba,{\Omega}}$, then set $\H_{\mbf{c}}(\ba,{\Omega}) := \H_{\mbf{c}} / \langle I \rangle$. One can reinterpret Lemma \ref{prop:progen} as saying that $\H_{\mbf{c}}(\ba,{\Omega})$ is a \textit{split} Azumaya algebra over $\OmCh_{\ba,{\Omega}}$, with splitting bundle $\Delta (p,\lambda)$. 
\end{remark}

\subsection{Lagrangian subvarieties}

It is shown in \cite[Proposition 4.5]{IainSurvey} that $X_{\mbf{c}}$ is a symplectic variety, see \cite{FuSurvey} for the definition and properties of symplectic varieties. This implies that the smooth locus $X_{\mbf{c}}^{\sm}$ is a symplectic leaf in $X_{\mbf{c}}$ and hence its compliment has codimension at least two in $X_{\mbf{c}}$. 

\bdefn
A reduced subvariety $Y$ of $X_{\mbf{c}}$ is said to a Lagrangian subvariety if $Y_\sm^i \cap X_{\mbf{c}}^{\sm}$ is a \textit{non-empty} Lagrangian submanifold of $X_{\mbf{c}}^{\sm}$ for each irreducible component $Y^i$ of $Y$.  
\edefn

The goal of this subsection is to prove the following proposition. It is a consequence of Gabber's Integrability Theorem \cite{Gabber}.  

\begin{prop}\label{prop:lagrangian}
Let $\ba \in \h^* / W$. Then $\pi^{-1}(\ba)_{\red}$ is a Lagrangian subvariety of $X_{\mbf{c}}$.
\end{prop}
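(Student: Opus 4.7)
The plan is to combine three ingredients: flatness of $\pi$ to pin down the dimension of the fibers, Gabber's Integrability Theorem to deduce coisotropy of the reduced fiber, and a Poisson-geometry argument on the singular locus $X_{\mbf{c}}^{\sing}$ to ensure that every irreducible component meets the smooth locus.

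First, Lemma \ref{lem:basicprop} tells us that $\pi$ is flat of relative dimension $n := \dim \h$, so every irreducible component $Z^i$ of $\pi^{-1}(\ba)_{\red}$ has dimension $n$, exactly half of $\dim X_{\mbf{c}} = 2n$. Next, I would argue that the ideal $I := \mf{m}_{\ba} \cdot \ZH_{\mbf{c}}$ defining $\pi^{-1}(\ba)$ is Poisson involutive. Indeed, because the coordinates $y \in \h \subset \C[\h^*]$ already commute inside $\H_{\mbf{t},\mbf{c}}$, the induced Poisson bracket on $\ZH_{\mbf{c}}$ vanishes identically on the subalgebra $\C[\h^*]^W$. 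Hence $\{\mf{m}_{\ba}, \mf{m}_{\ba}\} = 0$, and the Leibniz rule upgrades this to $\{I,I\} \subseteq I$. Gabber's Integrability Theorem \cite{Gabber} then ensures that $\sqrt{I}$ is also involutive, so $\pi^{-1}(\ba)_{\red}$ is coisotropic in $X_{\mbf{c}}$.

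To verify the non-emptiness condition in the definition of a Lagrangian subvariety, observe that the finiteness of $\Upsilon = \pi \times \varpi$ forces $\varpi|_{Z^i} : Z^i \to \h/W$ to be a finite morphism; since $Z^i$ is irreducible of dimension $n = \dim(\h/W)$, the image $\varpi(Z^i)$ is closed of dimension $n$, hence equals $\h/W$. On the other hand, because $X_{\mbf{c}}$ is a symplectic variety, $X_{\mbf{c}}^{\sing}$ is a finite union of symplectic leaves, each of even dimension at most $2n-2$. For any such leaf $L$ of dimension $2k$, the subalgebra $\C[\h]^W$ restricts to a Poisson-commutative subalgebra of the symplectic coordinate ring $\C[L]$, so its transcendence degree is at most $k \leq n-1$; thus $\dim \varpi(L) \leq n-1$, and taking the union over the leaves gives $\dim \varpi(X_{\mbf{c}}^{\sing}) \leq n-1 < n$. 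So there exists $\bb \in \h/W \setminus \varpi(X_{\mbf{c}}^{\sing})$ with $\varpi^{-1}(\bb) \subseteq X_{\mbf{c}}^{\sm}$. As $\varpi|_{Z^i}$ is surjective, $Z^i \cap \varpi^{-1}(\bb)$ is non-empty, so $Z^i \cap X_{\mbf{c}}^{\sm}$ is a non-empty open subset of the irreducible variety $Z^i$, and hence meets the dense open smooth locus $Z^i_{\sm}$.

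Finally, at any point $z \in Z^i_{\sm} \cap X_{\mbf{c}}^{\sm}$, the tangent space $T_z Z^i$ is an $n$-dimensional coisotropic subspace of the symplectic vector space $T_z X_{\mbf{c}}$ of dimension $2n$, hence Lagrangian. I expect the most delicate step to be the non-emptiness check: Gabber's theorem produces a coisotropic subscheme, but gives no information on whether the components may sit entirely inside the singular locus, and bridging from that abstract Poisson statement to the geometric Lagrangian condition requires the auxiliary argument via $\varpi$ together with the leaf-theoretic dimension bound on $\varpi(X_{\mbf{c}}^{\sing})$.
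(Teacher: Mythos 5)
Your argument has the right skeleton (flatness gives dimension $n$, Gabber gives coisotropy, and an auxiliary argument forces each component to meet $X_{\mbf{c}}^{\sm}$), and your non-emptiness step is a genuinely different and rather elegant route: the paper instead extends the regular sequence by a power $\delta^k$ of the discriminant and invokes the Dunkl embedding to identify $Y^i\setminus V(\delta^k)$ as lying in the smooth locus, whereas you bound $\dim\varpi(X_{\mbf{c}}^{\sing})$ by observing that $\C[\h]^W$ restricts to a Poisson-commutative subalgebra on each symplectic leaf of dimension $2k\le 2n-2$ and hence has transcendence degree at most $k\le n-1$ there. Both work; yours avoids the Dunkl embedding at the cost of invoking finiteness of the symplectic-leaf stratification.

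The real gap is the involutivity of $J=\sqrt{\mf{m}_{\ba}\cdot\ZH_{\mbf{c}}}$. You pass from ``$\{\mf{m}_{\ba},\mf{m}_{\ba}\}=0$, hence $\{I,I\}\subseteq I$'' to ``Gabber's Integrability Theorem then ensures $\sqrt{I}$ is also involutive'' in a single sentence. That is not what Gabber's theorem says. Gabber's theorem takes as input a \emph{module} over a filtered (or $\C[\epsilon]$-flat) noncommutative algebra with commutative associated graded and concludes that the radical of the annihilator of the associated graded module is involutive; it is not the abstract Poisson-algebraic implication ``$I$ involutive $\Rightarrow \sqrt{I}$ involutive,'' and the paper explicitly flags that this implication ``does not in general'' hold (see the opening of the proof of Lemma~\ref{lem:int}). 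To apply Gabber here one must produce a module whose support (reduced) equals $V(J)$ together with a flat lift to third order in $\epsilon$; the paper's Lemma~\ref{lem:int} does exactly this, taking $M=\bigoplus_i\bigoplus_{\lambda\in\Irr(W_{p_i})}\Delta(p_i,\lambda)$, proving $V(J)=(\Supp M)_{\red}$, passing through the spherical subalgebra $e\H_{\epsilon,\mbf{c}}e$ so that the Satake isomorphism transports the Poisson bracket, and checking Gabber's condition (1.2) via $\C[\epsilon]$-flatness of $\Delta_\epsilon(p,\lambda)$. Your proof supplies none of this, so the coisotropy of $\pi^{-1}(\ba)_{\red}$ is unestablished. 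A secondary omission in the same spirit: even granting $\{J,J\}\subseteq J$, to conclude that the tangent space to each irreducible component $Z^i$ at a smooth point is coisotropic you need involutivity of each minimal prime $J^i$ over $J$, for which the paper cites \cite[Lemma~2.1]{CoutinoKrull}; you apply the tangent-space argument componentwise without this step.
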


Let $\mf{m}_{\ba}$ be the maximal ideal in $\C[\h^*]^W$ corresponding to $\ba$. First, we show

\begin{lem}\label{lem:int}
Let $J$ be the radical of the ideal generated by $\mf{m}_{\ba}$ in $\ZH_{\mbf{c}}$. Then, $J$ is involutive i.e. $\{ J, J \} \subseteq J$. 
\end{lem}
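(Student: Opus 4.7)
The plan is to reduce this to Gabber's Integrability Theorem. Recall that Gabber's theorem says that in any Poisson algebra, the radical of an involutive ideal is involutive. So it suffices to show that the ideal $I := \mf{m}_\ba \cdot \ZH_{\mbf{c}}$ itself is involutive; then $J = \sqrt{I}$ will be involutive automatically.

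The key input is that $\C[\h^*]^W$ is a Poisson-commutative subalgebra of $\ZH_{\mbf{c}}$. To see this, recall that the Poisson bracket on $\ZH_{\mbf{c}}$ is defined by lifting elements to the flat $\C[\mbf{t}]$-deformation $\H_{\mbf{t},\mbf{c}}$, taking commutators, and dividing by $\mbf{t}$. By the PBW property for $\H_{\mbf{t},\mbf{c}}$ and the defining relation $[y,y']=0$ (which holds uniformly in $\mbf{t}$), the subalgebra $\C[\h^*] \subset \H_{\mbf{t},\mbf{c}}$ is commutative at every value of $\mbf{t}$. Consequently any two elements of $\C[\h^*]^W \subset \ZH_{\mbf{c}}$ lift to elements whose commutator in $\H_{\mbf{t},\mbf{c}}$ vanishes identically, so $\{\C[\h^*]^W, \C[\h^*]^W\} = 0$. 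In particular $\{\mf{m}_\ba, \mf{m}_\ba\} = 0$.

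From here, involutivity of $I$ is a direct application of the Leibniz rule. For $a,b \in \mf{m}_\ba$ and $u,v \in \ZH_{\mbf{c}}$, one expands
\[
\{ua, vb\} = uv\{a,b\} + u\{a,v\}b + v\{u,b\}a + \{u,v\}ab.
\]
The first term vanishes, and each of the remaining three terms contains an explicit factor from $\mf{m}_\ba$, hence lies in $I$. Since elements of the form $ua$ with $a \in \mf{m}_\ba$, $u \in \ZH_{\mbf{c}}$ generate $I$, biderivativity of $\{-,-\}$ gives $\{I, I\} \subseteq I$.

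Applying Gabber's Integrability Theorem to the involutive ideal $I$ then yields $\{J,J\} \subseteq J$, completing the proof. There is no real obstacle here: the only nontrivial input is Gabber's theorem itself, and the verification that $\C[\h^*]^W$ is Poisson-commutative is essentially immediate from the structure of the deformation $\H_{\mbf{t},\mbf{c}}$.
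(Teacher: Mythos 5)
The first half of your argument is fine and matches what the paper itself observes: $\C[\h^*]^W$ is Poisson-commutative in $\ZH_{\mbf{c}}$ (since $\C[\h^*]$ remains commutative in the flat deformation $\H_{\mbf{t},\mbf{c}}$), and the Leibniz rule then shows that the ideal $I := \mf{m}_{\ba} \cdot \ZH_{\mbf{c}}$ is involutive. The paper states this in one line before its proof begins.

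The gap is the step from $I$ involutive to $J = \sqrt{I}$ involutive. You attribute to Gabber the statement that \emph{in any Poisson algebra the radical of an involutive ideal is involutive}, but this is not Gabber's theorem. Gabber's integrability theorem is a module-theoretic result: in one form (Theorem I of \cite{Gabber}) it says that the characteristic variety of a finitely generated module over a filtered ring with Noetherian commutative associated graded is involutive; in the form used here (Theorem II) it concerns a Poisson algebra arising as a first-order deformation, together with a module that extends flatly to a third-order deformation, and concludes that the radical of the \emph{annihilator of that module} is involutive. There is no abstract implication ``$I$ involutive $\Rightarrow \sqrt I$ involutive'' in an arbitrary Noetherian Poisson algebra available off the shelf --- and the paper explicitly flags this: ``it seems that this does not in general imply that $J$ is involutive. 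Therefore, we need to work a bit harder.'' (The analogous statement for \emph{Poisson} ideals, i.e.\ ideals with $\{I,A\}\subseteq I$, is classical and sometimes attributed to Gabber, but $I=\mf{m}_{\ba}\cdot\ZH_{\mbf{c}}$ is only involutive, not Poisson: $\{\mf{m}_{\ba}, \ZH_{\mbf{c}}\}\not\subseteq I$ in general.) Your proposal therefore has a genuine missing ingredient.

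What the paper does instead is precisely supply the module data that Gabber's Theorem II requires. It builds the $\H_{\mbf{c}}$-module $M = \bigoplus_i \bigoplus_{\lambda}\Delta(p_i,\lambda)$, checks that $V(J) = (\Supp M)_{\mathrm{red}}$ (using that every simple module over a point of $V(J)$ receives a nonzero map from some $\Delta(p_i,\lambda)$), and then observes that $M$ admits a flat extension $M_{\epsilon}$ over the third infinitesimal neighborhood $\C[\epsilon]/(\epsilon^3)$ because the Verma modules are defined for all $t$. Passing to the spherical subalgebra $e\H_{\epsilon,\mbf{c}}e$, the freeness of $eM_\epsilon$ over $\C[\epsilon]$ verifies condition (1.2) of \cite{Gabber}, and Theorem II then gives involutivity of $J$. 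So the content you are missing is exactly the construction of the quantizable module whose support cuts out $V(J)$; this is the ``work a bit harder'' the author refers to, and it cannot be replaced by a purely ideal-theoretic appeal to Gabber.
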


\begin{proof}
It is clear from the definition of the Poisson bracket on $\ZH_{\mbf{c}}$ that the ideal generated by $\mf{m}_{\ba}$ in $\ZH_{\mbf{c}}$ is involutive. However, it seems that this does not in general imply that $J$ is involutive. Therefore, we need to work a bit harder. Since we have not assumed any smoothness condition on $X_{\mbf{c}}$, we are also unable to use results from previous sections. Let $p_1, \ds, p_{\ell} \in \h^*$ be the elements in the orbit $\ba$. Set
$$
M = \bigoplus_{i = 1}^{\ell} \left( \bigoplus_{\lambda \in W_{p_i}} \Delta(p_i,\lambda) \right).
$$
Let $Y = (\Supp \ M)_{\red}$. Then, I claim that $Y = V(J)$. Since $\mf{m}_{\ba} \cdot M = 0$, we have $Y \subset V(J)$. Let $x \in V(J)$ and choose some simple $\H_{\mbf{c}}$-module $L$ supported on $x$. As a $\C[\h^*]$-module, $L = \oplus_{i = 1}^{\ell} L_{p_i}$, where $L_{p_i}$ is supported at $p_i$. Without loss of generality, we may assume that $L_{p_1} \neq 0$. Let $\lambda \subset L_{p_1}$ be an irreducible $W_{p_1}$-module in the socle of $L_{p_1}$. Then, there is a non-zero homomorphism $\Delta(p_1, \lambda) \rightarrow L$. This implies that $\Hom_{\H_{\mbf{c}}}(M,L) \neq 0$ and hence $x \in Y$. 

Now, let $\C[\epsilon]$ be functions on the 3rd infinitesimal neighborhood of $0$ in $\C$, so that $\epsilon^3 = 0$. It will be easier to work, via the Satake isomorphism, with the spherical subalgebra $e \H_{\mbf{c}} e$. The usual rational Cherednik algebra $\H_{t,\mbf{c}}$ has an additional parameter $t$, which we have assume throughout is set to zero. Specializing instead to $t = \epsilon$, we have a $\C[\epsilon]$-algebra $e \H_{\epsilon,\mbf{c}} e$ such that $e \H_{\epsilon,\mbf{c}} e/ \epsilon e \H_{\epsilon,\mbf{c}} e \simeq e \H_{\mbf{c}} e$. Then the Poisson structure on $e \H_{\mbf{c}} e$ is constructed as in \cite{Gabber}. We can define $\Delta_{\epsilon}(p,\lambda)$ in the obvious way. It is a $\H_{\epsilon, \mbf{c}}$-module, free over $\C[\epsilon]$. This gives us a $e \H_{\epsilon,\mbf{c}} e$-module $e M_{\epsilon}$, free over $\C[\epsilon]$. This freeness implies that condition (1.2) of \cite{Gabber} is satisfied. Then, the fact that $J$ is involutive is a consequence of \cite[Theorem II]{Gabber}, together with the fact that the Satake isomorphism is an isomorphism of Poisson algebras.     
\end{proof} 

\begin{proof}[Proof of Proposition \ref{prop:lagrangian}]
Let $n = \dim \h$. The maximal ideal $\mf{m}_{\ba}$ in $\C[\h]^W$ is generated by a regular sequence $f_1, \ds, f_n$. By Lemma \ref{lem:basicprop} (1), they also form a regular sequence in $\ZH_{\mbf{c}}$. Therefore, the fact that the morphism $\pi$ is flat, together with \cite[Corollary 9.6 (ii)]{Hartshorne}, implies that each irreducible component of $\pi^{-1}(\ba)_{\red}$ is $n$-dimensional. Let $Y^1, \ds, Y^k$ be these irreducible components. Let $\delta := \prod_{s \in \mc{S}} \alpha_s \in \C[\h]$; it is a semi-invariant with respect to $W$. Therefore, there exists some $k \in \N$ such that $\delta^k \in \C[\h]^W$. The sequence $f_1, \ds, f_n$ extends to a regular sequence $f_1, \ds, f_n, \delta^k$ in $Z_{\mbf{c}}$. This implies that, for each $i$, $\dim Y^i \cap V(\delta^k) = n-1$ if $Y^i \cap V(\delta^k) \neq \emptyset$. In particular, $Y^i \backslash V(\delta^k) \neq \emptyset$ for all $i$. The Dunkl embedding, as explained in \cite[\S 4]{EG}, shows that $\H_{\mbf{c}}[\delta^{-k}] \simeq \C[\h \times \h^*_{\reg}] \rtimes W$ where $\h^*_{\reg}$ is the set of points in $\h^*$ with trivial $W$-stabilizer. Since the centre of $\C[\h \times \h^*_{\reg}] \rtimes W$ is a regular domain, it follows that $Y^i \backslash V(\delta^k) \subset X_{\mbf{c}}^{\sm}$. Thus, we have shown that $Y^i \cap X_{\mbf{c}}^{\sm}$ is non-empty for all components $Y^i$ of $\pi^{-1}(\ba)_{\red}$. Let $J^i$ be the ideal defining $Y^i$. It is a minimal prime over the ideal $J$. By Lemma \ref{lem:int}, $J$ is an involutive ideal. Therefore, $J^i$ is an involutive ideal, see \cite[Lemma 2.1]{CoutinoKrull}. Choose some point $x \in Y^i \cap X_{\mbf{c}}^{\sm}$. Then, \cite[Proposition 1.5.1]{CG} says that $T_x Y^i$ is a coisotropic subspace of $T_x X_{\mbf{c}}$. But $\dim T_x Y = n$, therefore it is actually a Lagrangian subspace as required.   
\end{proof}

\begin{remark}
\begin{enumerate}
\item If $X_{\mbf{c}}$ is a smooth variety, as will be the case for the symmetric group, then it is a symplectic manifold and Proposition \ref{prop:lagrangian} shows that $\pi^{-1}(\ba)_{\red}$ is the disjoint union of finitely many Lagrangian submanifolds of $X_{\mbf{c}}$.
\item An analogous result to Proposition \ref{prop:lagrangian} holds when one considers $\varpi^{-1}(\bb)$ for $\bb \in \h / W$.
\end{enumerate}
\end{remark}

\section{$\Tor$ and $\Ext$}\label{sec:BV}

In this section we sketch the proof of the results stated in the introduction. A quantizable $\H_{\mbf{c}}$-module $M$ is said to be \tit{simple}, resp. \textit{generically simple}, if $e M$ is a cyclic $\ZH_{\mbf{c}}$-module, resp. $e M |_{X_{\mbf{c}}^{\sm}}$ is a cyclic $\ZH_{\mbf{c}}  |_{X_{\mbf{c}}^{\sm}}$-module. 

\begin{proof}[Proof of Proposition \ref{prop:BG}]
It will be easier to consider $\H_{\mbf{c}}$ as a sheaf of algebras on $X_{\mbf{c}}$. If $\widehat{\H}_{\mbf{t},\mbf{c}}$ denotes the completion of $\H_{\mbf{t},\mbf{c}}$ with respect to $(\mbf{t})$, then via algebraic microlocalization, $\widehat{\H}_{\mbf{t},\mbf{c}}$ is also a sheaf of algebras on $X_{\mbf{c}}$.  On the smooth locus $X^{\sm}_{\mbf{c}}$, $E$ defines an equivalence $\Lmod{\H_{\mbf{c}}} \stackrel{\sim}{\rightarrow} \Lmod{\ZH_{\mbf{c}}}$. This extends to an equivalence $\Lmod{\widehat{\H}_{\mbf{t},\mbf{c}}} \stackrel{\sim}{\rightarrow} \Lmod{e\widehat{\H}_{\mbf{t},\mbf{c}} e}$.

Since the support of $M |_{X^{\sm}_{\mbf{c}}}$ equals the support of $M' |_{X^{\sm}_{\mbf{c}}}$, and we have assumed that $M$ and $N$ have smooth intersection, 
$$
\Tor^{\H}_{\idot}(M',N)  = \Tor^{\H}_{\idot}(M' |_{X^{\sm}_{\mbf{c}}},N |_{X^{\sm}_{\mbf{c}}}) \simeq \Tor^{\ZH}_{\idot}(e M |_{X^{\sm}_{\mbf{c}}}, eN |_{X^{\sm}_{\mbf{c}}}),
$$
and, similarly, $\Ext_{\H}^{\idot}(M,N)$ is isomorphic to $\Ext^{\idot}_{\ZH} ( eM |_{X_{\mbf{c}}^{\sm}}, eN |_{X_{\mbf{c}}^{\sm}})$. Since $e M |_{X^{\sm}_{\mbf{c}}}$ and $e N |_{X^{\sm}_{\mbf{c}}}$ are simple, they are quotients of $\ZH_{\mbf{c}} |_{X^{\sm}_{\mbf{c}}}$. Therefore, they are naturally commutative algebras. Now the claims of Proposition  \ref{prop:BG} are consequences of the theory developed in \cite{GBVGinzburg}; in particular, Corollary 1.1.3.  The assumptions of \textit{loc. cit.} that $e M |_{X^{\sm}_{\mbf{c}}}$ and $e N |_{X^{\sm}_{\mbf{c}}}$ define smooth, closed subvarities of $X^{\sm}_{\mbf{c}}$ is unnecessary in our case since that assumption is only required in order to guarantee the existence of a third order quantization of the modules. But, in our case, the existence of the quantization is built into the definition of quantizable modules.   
\end{proof}

Fix $p \in \h^*$, $\Omega \in \CM_{\mbf{c}'}(W_p)$ and let $\ba$ be the image of $p$ in $\h^* / W$. Let $I$ be the ideal defining the closed subscheme $\OmCh_{\ba,\Omega}$ in $X_{\mbf{c}}$. We assume $\Omega = \{ \lambda_{\Omega} \}$, so that $\OmCh_{\ba,\Omega}$ is smooth. Then $\NN_{\ba,\Omega}^{\vee} := I / I^2 $ is a free $\ZH_{\mbf{c}} / I$-module. As noted in the introduction, it is the module of sections of the conormal bundle of $\OmCh_{\ba,\Omega}$ in $X_{\mbf{c}}$. Its dual $\NN_{\ba,\Omega} := (I / I^2)^{\vee}$ is the module of sections of the normal bundle of $\OmCh_{\ba,\Omega}$ in $X_{\mbf{c}}$. 

\begin{proof}[Proof of Corollary \ref{thm:extiso1}]
Let $\lambda^* = \Hom_{\C}(\lambda,\C)$, an irreducible right $W_p$-module. We denote by $(p,\lambda^*) \Delta$ the right $\H_{\mbf{c}}$-module induced from the right $\C[\h^*] \rtimes W_p$-module $\lambda^*$. Standard arguments using the Kozsul resolution show that $\Ext^n_{\H_{\mbf{c}}}(\Delta(p,\lambda),\H_{\mbf{c}}) \simeq (p,\lambda^*) \Delta$ c.f. \cite[Lemma 4.1]{GGOR}. The results of Theorem \ref{thm:main} apply equal well to the right $\H_{\mbf{c}}$-module $(p,\lambda^*) \Delta$. In order to be able to apply Proposition \ref{prop:BG}, the only thing left to check is that $e \Delta(p,\lambda)$ and $(p,\lambda^*) \Delta e$ define the same smooth, closed subvariety of $X_{\mbf{c}}$ i.e. that they are isomorphic as $\ZH_{\mbf{c}}$-modules. By completing $\H_{\mbf{c}}$ at $\bb \in \h^* / W$, we may assume that $p = 0$. Then, $\Ext^n_{\H_{\mbf{c}}}(\Delta(p,\lambda),\H_{\mbf{c}}) \simeq (p,\lambda^*) \Delta$ implies that the support of $ (p,\lambda^*) \Delta$ is contained in the support of $\Delta(p,\lambda)$. In particular, it is contained in the smooth locus of $X_{\mbf{c}}$. Thus, the "right block" of $\H^{\bo}_{\mbf{c}}$ containing $\lambda^*$ consists of a single element and Corollary \ref{cor:polyring} implies that the support of  $ (p,\lambda^*) \Delta$ is isomorphic to $\mathbb{A}^n$. Thus, $e \Delta(p,\lambda) \simeq (p,\lambda^*) \Delta e$ as $\ZH_{\mbf{c}}$-modules. The statement of the corollary now follows from Theorem \ref{thm:BViso} of the appendix. 
\end{proof}

Recall from Theorem \ref{thm:deltaend} that the graded character of $\C[\OmCh_{\bo,\Omega}]$ is $q^{-b_{\Omega }}f_{\Omega}(q) \prod_{i = 1}^n (1 - q^{d_i})^{-1}$, where $d_1, \ds, d_n$ are the degrees of $(W,\h)$. Since $\C[\OmCh_{\bo,\Omega}]$ is a polynomial ring, this implies that there exists\footnote{For an arbitrary representation $\lambda \in \Irr (W)$, it is not possible to find integers $0 < e_1 \le \ds \le e_n$ such that (\ref{eq:eis}) holds. This is related to the fact that $\bx_{\lambda}$ is always in the singular locus of $X_{\mbf{c}}$, regardless of the parameter $\mbf{c}$.} integers $0 < e_1 \le \ds \le e_n$ such that 
\beq{eq:eis}
q^{-b_{\Omega}} f_{\Omega}(q) \prod_{i = 1}^n \frac{1}{1 - q^{e_i}} = \prod_{i = 1}^n \frac{1}{1 - q^{d_i}}.
\eeq

\begin{cor}
When $\Omega = \{ \lambda_{\Omega} \}$, we have, as bigraded vector spaces, 
\beq{eq:torchar}
\mathrm{ch}_{q,t}  \left( \Tor_{\idot}^{\H_{\mbf{c}}}(\Delta(\lambda)^{\op},\Delta(\lambda)) \right) = q^{-b_{\Omega}}f_{\Omega}(q) \prod_{i = 1}^n \frac{1 + t q^{-e_i}}{1 - q^{d_i}},
\eeq
and
\beq{eq:extchar}
\mathrm{ch}_{q,t} \left( \Ext^{\idot}_{\H_{\mbf{c}}}(\Delta(\lambda),\Delta(\lambda)) \right) = q^{-b_{\Omega}}f_{\Omega}(q) \prod_{i = 1}^n \frac{1 + t q^{e_i}}{1 - q^{d_i}} . 
\eeq
\end{cor}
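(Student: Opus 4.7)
The plan is to invoke Corollary \ref{thm:extiso1} and to compute the bigraded Hilbert series of the exterior algebras $\wedge^{\idot}\NN_{\ba,\Omega}^{\vee}$ and $\wedge^{\idot}\NN_{\ba,\Omega}$; the formulae \eqref{eq:torchar} and \eqref{eq:extchar} should then follow by substituting the graded character of $E_{\Omega}$ provided by Theorem \ref{thm:deltaend}.

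First I would determine the internal degrees carried by a free homogeneous basis of $\NN_{\ba,\Omega}^{\vee}$ and $\NN_{\ba,\Omega}$ as graded $E_{\Omega}$-modules. By Lemma \ref{lem:smoothsupport}, $\OmCh_{\ba,\Omega}$ lies in the smooth locus of $X_{\mbf{c}}$, where the Poisson structure is non-degenerate. Locally, the symplectic form is of the shape $\omega = \sum dx_{i}\wedge dy_{i}$ with $x\in\h^{*}$ of weight $+1$ and $y\in\h$ of weight $-1$, so $\omega$ is $\Cs$-equivariant of weight zero. Since $\OmCh_{\ba,\Omega}$ is Lagrangian, $\omega$ induces a weight-preserving isomorphism of graded $E_{\Omega}$-modules $\NN_{\ba,\Omega}^{\vee}\simeq T_{\OmCh_{\ba,\Omega}}$, and dually $\NN_{\ba,\Omega}\simeq\Omega^{1}_{\OmCh_{\ba,\Omega}}$.

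Next, by Corollary \ref{cor:polyring} and equation \eqref{eq:eis}, one may pick algebraically independent homogeneous generators $z_{1},\dots,z_{n}$ of $E_{\Omega}$ of degrees $e_{1},\dots,e_{n}$. Then $T_{\OmCh_{\ba,\Omega}}=\bigoplus_{i}E_{\Omega}\,\partial_{z_{i}}$ is free with basis in degrees $-e_{i}$, while $\Omega^{1}_{\OmCh_{\ba,\Omega}}=\bigoplus_{i}E_{\Omega}\,dz_{i}$ is free with basis in degrees $+e_{i}$. The bigraded Hilbert series of the exterior algebra of a free rank-$n$ $E_{\Omega}$-module with homogeneous generators in degrees $f_{1},\dots,f_{n}$ (with $t$ tracking the exterior/homological degree) is $\mathrm{ch}_{q}(E_{\Omega})\prod_{i=1}^{n}(1+t q^{f_{i}})$. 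Inserting $f_{i}=-e_{i}$ yields \eqref{eq:torchar} and $f_{i}=+e_{i}$ yields \eqref{eq:extchar}.

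The main subtlety in carrying this out, in my view, is checking that the canonical isomorphism $\NN_{\ba,\Omega}\simeq\Omega^{1}_{\OmCh_{\ba,\Omega}}$ is strictly weight-preserving for the $\Cs$-action, which in turn hinges on the fact that the symplectic form on $X_{\mbf{c}}$ has $\Cs$-weight zero. Once that is agreed, the remainder is a bookkeeping exercise with free graded modules. An alternative, more hands-on route would be to exhibit an explicit homogeneous regular sequence generating the defining ideal of $\OmCh_{\ba,\Omega}$ in $\ZH_{\mbf{c}}$ and to read off their degrees directly from \eqref{eq:eis}, but the symplectic route seems cleaner.
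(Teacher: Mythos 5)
Your proposal is correct and follows essentially the same path as the paper: invoke Corollary \ref{thm:extiso1}, use that the symplectic form has $\Cs$-weight zero to identify $\NN_{\ba,\Omega}^{\vee}$ with $T_{\OmCh_{\ba,\Omega}}$ (degrees $-e_i$) and $\NN_{\ba,\Omega}$ with $\Omega^1_{\OmCh_{\ba,\Omega}}$ (degrees $+e_i$), then read off the bigraded character of the exterior algebra. The paper phrases the same observation at the isolated $\Cs$-fixed point $x$, decomposing $T_xX_{\mbf{c}}$ into $T_x\Lambda_{\bo,\Omega}$ and a $\Cs$-invariant Lagrangian complement paired against it by the weight-zero form, but this is only a presentational difference from your global module-theoretic formulation.
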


\begin{proof}
Let $x$ be the $\Cs$-fixed point in $X_{\mbf{c}}$ over which $L(\lambda_{\Omega})$ lives. As in the proof of Corollary \ref{cor:polyring}, the integers $e_i$ are defined so that the graded character of $T_{x} \Lambda_{\bo,\Omega}$ is given by $\sum_{i = 1}^n q^{-e_i}$. We have $T_{x} X_{\mbf{c}} = T_{x} \Lambda_{\bo,\Omega} \oplus (T_{x} \Lambda_{\bo,\Omega})^{\perp}$ with respect to the symplectic form on $X_{\mbf{c}}$, and we may identify $(T_{x} \Lambda_{\bo,\Omega})^{\perp} = (T_{\Lambda_{\bo,\Omega}} X_{\mbf{c}})_{x}$ as $\Cs$-representations. This implies that 
$$
\mathrm{ch}_q (T_{\Lambda_{\bo,\Omega}} X_{\mbf{c}})_{x} = \sum_{i = 1}^n q^{e_i}.
$$
Then formula (\ref{eq:extchar}) follows from the fact that 
$$
\Ext^{\idot}_{\H_{\mbf{c}}}(\Delta(\lambda_{\Omega}),\Delta(\lambda_{\Omega})) = \C[\Lambda_{\bo,\Omega}] \o \wedge^{\idot} (T_{\Lambda_{\bo,\Omega}} X_{\mbf{c}})_{x}
$$
as graded vector spaces, together with the fact that if $V$ is a graded vector space with character $q^{m_1} + \cdots + q^{m_k}$ then the bigraded character of $\wedge^{\idot} V$ is given by $(1 + t q^{m_1}) \cdots (1 + tq^{m_k})$. The proof of formula (\ref{eq:torchar}) is similar. 
\end{proof}

We end the section by noting one other situation were one can easily calculate virtual deRham (co)homology groups. Let $q \in \h$ and $\mu \in \Irr (W_q)$. We let the dual Verma module associated to $q$ and $\mu$ be the induced module $\nabla (q,\mu) := \H_{\mbf{c}} \o_{\C[\h] \rtimes W_q} \mu$, where the action of $\C[\h]$ on $\mu$ is via evaluation at $q$. Choose $ \Omega \in \CM_{\mbf{c}}(W)$ such that $\Omega = \{ \lambda_{\Omega} \}$.   Let $\C[i]$ denote the complex with a copy of $\C$ in degree $i$ and zero in all other degrees. 

\begin{prop} 
We have $\Tor^{\H_{\mbf{c}}}_{\idot}(\Delta(\lambda_{\Omega}),\nabla(\lambda_{\Omega})) \simeq \C[0]$ and $\Ext_{\H_{\mbf{c}}}^{\idot}(\Delta(\lambda_{\Omega}),\nabla(\lambda_{\Omega})) \simeq \C[n]$.
\end{prop}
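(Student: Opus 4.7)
The plan is to work with the spherical equivalence on $X_{\mbf{c}}^{\sm}$, recognise the two modules as structure sheaves of smooth Lagrangians meeting transversally at the single point $\bx_\Omega$, and then compute via Koszul.

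First I would establish the dual analogues of Lemma~\ref{lem:smoothsupport}, Corollary~\ref{cor:polyring} and Theorem~\ref{thm:main} for $\nabla(\lambda_\Omega)$. The rational Cherednik algebra admits an isomorphism $\sigma : \H_{\mbf{c}} \rightsim \H_{-\mbf{c}}$ interchanging $\h$ and $\h^*$; it carries $\Delta(\lambda) \mapsto \nabla(\lambda)$ and inverts the $\Cs$-action on $X_{\mbf{c}}$, thereby swapping attracting and repelling cells. Since $\Omega = \{\lambda_\Omega\}$ is a singleton, any ambiguity about the distinguished representative for the opposite restricted algebra vanishes, and the full theory transfers: $\nabla(\lambda_\Omega)$ is simple, its support $\Lambda'_{\bo,\Omega}$ is a smooth Lagrangian isomorphic to $\mathbb{A}^n$ contained in $X_{\mbf{c}}^{\sm}$, and $e \nabla(\lambda_\Omega) \simeq \mathcal{O}_{\Lambda'_{\bo,\Omega}}$. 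Concretely, $\Lambda_{\bo,\Omega}$ and $\Lambda'_{\bo,\Omega}$ are the attracting and repelling cells of the isolated $\Cs$-fixed point $\bx_\Omega$.

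Next I would verify that $\Lambda_{\bo,\Omega} \cap \Lambda'_{\bo,\Omega}$ is scheme-theoretically the reduced point $\bx_\Omega$. Set-theoretically, any common point of the attracting and repelling cells must be $\Cs$-fixed, and $\bx_\Omega$ is the only fixed point in either cell. For transversality, as in the proof of Corollary~\ref{cor:polyring}, the tangent space decomposes as $T_{\bx_\Omega} X_{\mbf{c}} = T_+ \oplus T_-$ under $\Cs$ with no zero-weight part (by \cite[Corollary~2.2]{BBFix}), and the tangent spaces $T_{\bx_\Omega}\Lambda_{\bo,\Omega} = T_+$ and $T_{\bx_\Omega}\Lambda'_{\bo,\Omega} = T_-$ are complementary Lagrangians of the symplectic vector space $T_{\bx_\Omega} X_{\mbf{c}}$.

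With these inputs in place, the spherical equivalence on $X_{\mbf{c}}^{\sm}$ reduces both Tor and Ext to the sheaf-theoretic groups $\Tor^{\mathcal{O}}_\idot(\mathcal{O}_{\Lambda_{\bo,\Omega}}, \mathcal{O}_{\Lambda'_{\bo,\Omega}})$ and $\Ext^\idot_{\mathcal{O}}(\mathcal{O}_{\Lambda_{\bo,\Omega}}, \mathcal{O}_{\Lambda'_{\bo,\Omega}})$. Choosing local coordinates $(x_1,\ldots,x_n,y_1,\ldots,y_n)$ on a formal neighbourhood of $\bx_\Omega$ adapted to $T_+ \oplus T_-$, we have $\Lambda_{\bo,\Omega} = V(y_1, \ldots, y_n)$ and $\Lambda'_{\bo,\Omega} = V(x_1, \ldots, x_n)$. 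The Koszul resolution of $\mathcal{O}_{\Lambda_{\bo,\Omega}}$, tensored with $\mathcal{O}_{\Lambda'_{\bo,\Omega}} = \C[y]$, becomes the Koszul complex of the regular sequence $(y_1,\ldots,y_n)$ in $\C[y]$, with homology $\C$ concentrated in degree $0$; its $\Hom(-,\mathcal{O}_{\Lambda'_{\bo,\Omega}})$-dual is the cohomological Koszul complex computing $\Ext^\idot_{\C[y]}(\C, \C[y])$, which by Gorenstein duality is $\C$ concentrated in degree $n$. The main obstacle is the first step: the $b$-invariant theory of Section~\ref{sec:CM} is asymmetric in $\h$ and $\h^*$, so the ``dual distinguished element'' for $\nabla$-modules (governed by the quotient by $\langle \C[\h]^W_+ \rangle$) need not coincide with $\lambda_\Omega$ in general; however, the singleton hypothesis $\Omega = \{\lambda_\Omega\}$ renders this distinction vacuous and allows the entire attracting/polynomial-ring picture to dualise cleanly.
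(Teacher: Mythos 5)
Your proposal is correct and takes essentially the same route as the paper: identify both supports as smooth Lagrangians in $X_{\mbf{c}}^{\sm}$ (isomorphic to $\mathbb{A}^n$, the attracting and repelling cells of $\bx_\Omega$), pass through the spherical equivalence on the smooth locus, observe that they meet transversally in the single point $\bx_\Omega$, and compute by Koszul (self-)duality. The only cosmetic difference is that the paper works with a global homogeneous regular sequence $\bz$ cutting out $\Lambda_{\bo,\Omega}$ rather than formal local coordinates adapted to $T_+\oplus T_-$, and cites Eisenbud's chapter 17 for the Koszul computation; your more explicit verification of transversality via the $\Cs$-weight decomposition and \cite[Corollary 2.2]{BBFix} is exactly the argument already used in Corollary~\ref{cor:polyring} and is implicitly invoked by the paper.
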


\begin{proof}
Our assumptions on $\Omega$ imply that there is a regular sequence of homogeneous elements $\bz = z_1, \ds, z_n$ such that $\Lambda_{\bo,\Omega}  = V(\bz)$. Let $\Pi_{\Omega}$ denote the support of $\nabla (\lambda_{\Omega})$. The analogue of Theorem \ref{thm:main} in this context implies that $\Pi_{\Omega}$  is a smooth Lagrangian subvariety of $X_{\mbf{c}}$, isomorphic to $\mathbb{A}^n$. Also, $\Tor^{\H}_{\idot}(\Delta(\lambda_{\Omega}),\nabla(\lambda_{\Omega})) \simeq \Tor^{X_{\mbf{c}}}_{\idot}\left( \mc{O}_{\Lambda_{\bo,\Omega}},\mc{O}_{\Pi_{\Omega}} \right)$ and $\Ext_{\H}^{\idot}(\Delta(\lambda_{\Omega}),\nabla(\lambda_{\Omega})) \simeq \Ext_{X_{\mbf{c}}}^{\idot}\left( \mc{O}_{\Lambda_{\bo,\Omega}},\mc{O}_{\Pi_{\Omega}} \right)$, so it suffices to calculate the latter groups. It is explained in chapter 17 of \cite{Eisenbud} how one does this. We use the notation of \textit{loc. cit.} Using the fact that both $\Lambda_{\bo,\Omega}$ and $\Pi_{\Omega}$ are smooth subvariety of $X_{\mbf{c}}$ intersecting transversely in a single point, we have 
$$
\Tor^{X_{\mbf{c}}}_{\idot}\left( \mc{O}_{\Lambda_{\bo,\Omega}},\mc{O}_{\Pi_{\Omega}} \right) = H_{\idot}(\bz,\mc{O}_{\Pi_{\Omega}}) \simeq \C[0],
$$
and $\Ext_{X_{\mbf{c}}}^{\idot}\left( \mc{O}_{\Lambda_{\bo,\Omega}},\mc{O}_{\Pi_{\Omega}} \right) = H^{\idot}(\bz,\mc{O}_{\Pi_{\Omega}}) = H_{n - \idot}(\bz,\mc{O}_{\Pi_{\Omega}}) \simeq \C[n]$, as required. 
\end{proof}

Obviously, $H_{\mathrm{vir}}^{\idot} (\Delta(\lambda_{\Omega}),\nabla(\lambda_{\Omega})) = \C[n]$ and $H^{\mathrm{vir}}_{\idot} ((\lambda_{\Omega}^*)\Delta,\nabla(\lambda_{\Omega})) = \C$ in this case. 

\section{Remarks}

By Lemma \ref{claim:idempotentlifting} (5), the connected components of $\varpi^{-1}(\bo)_{\red}$ are naturally labeled by the elements of $\CM_{\mbf{c}}(W)$. Moreover, Lemma \ref{lem:basicprop} implies that the irreducible components of $\varpi^{-1}(\bo)_{\red}$ are $\dim \h$-dimensional. Based on examples, it seems natural to conjecture: 

\begin{enumerate}
\item Each connected component of $\varpi^{-1}(\bo)_{\red}$ is irreducible and isomorphic to $\mathbb{A}^{n}$, where $n = \dim \h$; i.e. $\varpi^{-1}(\bo)_{\red} = \bigsqcup_{\Omega} \mathbb{A}^n_{\Omega}$.
\item For each $\Omega \in \CM_{\mbf{c}}(W)$,  $E_{\Omega} = \C[\mathbb{A}^n_{\Omega}]$. 
\item The map $\pi | : \mathbb{A}^n_{\Omega} \rightarrow \h / W$ is flat of degree $\dim \lambda_{\Omega}$, and is generically a covering. \\

\noindent We note that if $Y$ is the connected component labeled by $\Omega$, then one can study the generic fibers of $\pi | : Y \rightarrow \h / W$ by considering the representation theory of $\H_{\mbf{c}}^{\bb}$. Namely, statements (2) and (3) above would imply the following: \\

\item Assume that $\bb$ is generic. Then the head of $P^+(\lambda_{\Omega})^{\bb}$ equals $\Delta(0,\lambda_{\Omega},\bb)$, a semi-simple module, with pairwise non-isomorphic simple summands.  
\end{enumerate}

\section{Appendix: Batalin-Vilkoviski structures}

In this appendix we summarize the main results of \cite{GBVGinzburg}, as required in this article. We follow the presentation of \textit{loc. cit.}, which the reader is encouraged to consult for further details. All undecorated tensor products will mean tensor product over $\C$.  

Let $D = \bigoplus_{i \ge 0} D_i$ be a graded commutative algebra. If $D$ is equipped with a differential $\delta : D_{\bullet} \rightarrow D_{\bullet -1}$, $\delta^2 = 0$, and\footnote{This equation is equivalent to the fact that $\delta$ is a differential operator of order at most two.}
\begin{align*}
\delta(abc)  = & \ \delta(ab) c + (-1)^{\deg(a)} a \delta(bc) + (-1)^{(\deg(a) +1) \deg(b)} b \delta(ac) - \delta(a) bc \\
 & - (-1)^{\deg(a)} a \delta(b) c - (-1)^{\deg(a) + \deg(b)} a b \delta(c) + \delta(1) abc,
\end{align*}
for all homogeneous elements $a,b,c$ of $D$, then $D$ is said to be a \textit{Batalin-Vilkoviski} (BV) algebra. Every Batalin-Vilkoviski algebra has in addition the structure of a \textit{Gerstenhaber} algebra. Namely, for $x,y$ homogeneous elements in $D$, the formula
$$
[x,y] := \delta(xy) - \delta(x)y - (-1)^{\deg x} x \delta(y)
$$
makes $D_{\bullet}$ into a Gerstenhaber algebra. This means that $[ - , - ]$ has degree $-1$ and 
\begin{align*}
[a,bc] & = [a,b] c + (-1)^{(\deg(a) - 1) \deg (b)} b [a,c],\\
[a,b] & = -(1)^{(\deg(a) -1)(\deg(b) - 1)} [b,a], \\
[a,[b,c]] & = [[a,b],c] + (-1)^{(\deg(a) -1)(\deg(b) - 1)} [b,[a,c]],
\end{align*}
for all homogeneous elements $a,b,c$ of $D$ i.e. the graded commutative algebra $D$ is an odd Poisson algebra. 

Let $X$ be a smooth affine variety equipped with a symplectic two-form $\omega$. Let $Y$ be a smooth, coisotropic subvariety of $X$. Let $\NN_{X/Y}$ denote the sheaf of sections of the normal bundle of $Y$ in $X$. It is a sheaf of $\mc{O}_X$-modules supported on $Y$. Its dual $\NN_{X/Y}^{\vee}$ is the sheaf of sections of the conormal bundle. Since the two-form $\omega$ is non-degenerate, it induces an isomorphism $\Gamma(X,\Omega^2_X) \simeq \Gamma(X,\wedge^2 \Theta_X)$. We let $P$ be the image of $\omega$ under this isomorphism; it is a Poisson bivector. The graded commutative algebras $\wedge^{\bullet} \NN_{X/Y}^{\vee}$ and $\wedge^{\bullet} \NN_{X/Y}$ have a natural structure of BV algebra. Namely, the differential on $\wedge^{\bullet} \NN_{X/Y}^{\vee}$ is given by the formula $\delta = i_P \circ d_{\mathsf{DR}} + d_{\mathsf{DR}} \circ i_P$, where $d_{\mathsf{DR}}$ is the deRham differential of degree one and $i_P$ denotes contraction with $P$ (and thus $\delta$ has degree $-1$). The differential on $\wedge^{\bullet} \NN_{X/Y}$ is given by the Schouten bracket $[P,-]$. That these operations are indeed well-defined follows from the fact that the ideal defining $Y$ is involutive. Combining Corollary 1.1.3, Propositions 5.1.1 and 5.2.1 of \cite{GBVGinzburg} gives    

\begin{thm}\label{thm:BViso}
The sheaf of graded algebras $\mc{T}or_{\bullet}^{\mc{O}_X}(\mc{O}_Y,\mc{O}_Y)$ admits a canonical structure of a Gerstenhaber algebra such that 
\beq{eq:BViso1}
\mc{T}or_{\idot}^{\mc{O}_X}(\mc{O}_Y,\mc{O}_Y) \simeq \wedge^{\idot} \NN_{X/Y}^{\vee}
\eeq
as Gerstenhaber algebras. 

Moreover, the sheaf $\mc{E}xt^{\idot}_{\mc{O}_X}(\mc{O}_Y,\mc{O}_Y)$ admits a canonical structure of Gerstenhaber module over the Gerstenhaber algebra $\mc{T}or_{\idot}^{\mc{O}_X}(\mc{O}_Y,\mc{O}_Y)$ such that 
\beq{eq:BViso2}
\mc{E}xt^{\idot}_{\mc{O}_X}(\mc{O}_Y,\mc{O}_Y) \simeq \wedge^{\idot} \NN_{X/Y}
\eeq
as Gerstenhaber modules, compatible in the obvious sense with the identification (\ref{eq:BViso1}). 
\end{thm}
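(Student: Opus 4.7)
The plan is to reduce the statement to a local Koszul computation and then track the Gerstenhaber structure through that model. Since $Y$ is smooth and closed in $X$, the ideal sheaf $\mc{I} := \mc{I}_{Y/X}$ is locally generated by a regular sequence, so the associated Koszul complex $K_\bullet \to \mc{O}_Y$ gives a locally free resolution of $\mc{O}_Y$ over $\mc{O}_X$. Computing $K_\bullet \otimes_{\mc{O}_X} \mc{O}_Y$ and $\mc{H}om_{\mc{O}_X}(K_\bullet, \mc{O}_Y)$ yields, by the classical Hochschild--Kostant--Rosenberg identifications, isomorphisms $\mc{T}or_{\idot}^{\mc{O}_X}(\mc{O}_Y,\mc{O}_Y) \simeq \wedge^\idot \NN_{X/Y}^\vee$ and $\mc{E}xt^{\idot}_{\mc{O}_X}(\mc{O}_Y,\mc{O}_Y) \simeq \wedge^\idot \NN_{X/Y}$ as graded $\mc{O}_Y$-modules. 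The shuffle (Alexander--Whitney) product on $K_\bullet$ makes $K_\bullet$ into a DG $\mc{O}_X$-algebra resolution, so HKR becomes an isomorphism of graded-commutative algebras in the first case and of graded modules in the second.

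Next I would install the Gerstenhaber bracket. Coisotropy of $Y$ amounts to $\{\mc{I},\mc{I}\} \subseteq \mc{I}$, which is exactly the condition guaranteeing the existence of a flat deformation of $\mc{O}_Y$ to a module over $\mc{O}_X[\epsilon]/(\epsilon^3)$ lifting the original $\mc{O}_X$-action. Following \cite{GBVGinzburg}, such a third-order quantization produces a canonical degree $-1$ operator $\delta$ on any flat resolution of $\mc{O}_Y$, and the induced operator on $\mc{T}or$ and $\mc{E}xt$---the virtual de Rham differential---satisfies the BV Leibniz identity displayed at the start of this appendix. The formula $[x,y] := \delta(xy) - \delta(x)y - (-1)^{\deg(x)} x\,\delta(y)$ then endows $\mc{T}or_\idot^{\mc{O}_X}(\mc{O}_Y,\mc{O}_Y)$ with a Gerstenhaber bracket, and the parallel construction for $\mc{H}om$ makes $\mc{E}xt^\idot_{\mc{O}_X}(\mc{O}_Y,\mc{O}_Y)$ a Gerstenhaber module over it.

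It remains to identify the virtual de Rham differential with the classical operator $\delta = d_{\mathsf{DR}}\circ i_P + i_P \circ d_{\mathsf{DR}}$ on $\wedge^\idot \NN_{X/Y}^\vee$ and with the Schouten bracket $[P,-]$ on $\wedge^\idot \NN_{X/Y}$. For this I would pass to local Darboux--Weinstein coordinates on $X$ adapted to $Y$, in which $P$ has its standard constant-coefficient form and $Y$ is cut out by half of the symplectic coordinates. Taking the obvious third-order quantization in these coordinates and tracking $\delta$ through the Koszul model produces, term by term, the stated formulas. Compatibility of the $\mc{E}xt$ module structure over the $\mc{T}or$ algebra is then automatic, because both sides of (\ref{eq:BViso1}) and (\ref{eq:BViso2}) are built from the same resolution and the pairing $\NN_{X/Y} \otimes \NN_{X/Y}^\vee \to \mc{O}_Y$ matches the evaluation pairing coming from $\mc{H}om \otimes (- \otimes -)$.

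The main obstacle is exactly that identification: matching the virtual de Rham differential, defined abstractly as a homological invariant of the second-order quantization, with the explicit differential $d_{\mathsf{DR}} i_P + i_P d_{\mathsf{DR}}$. Once a natural local quantization is fixed (for instance the canonical one in Darboux coordinates, or a Fedosov-type normal form), the identification reduces to a finite, if somewhat delicate, coordinate calculation; but it is essentially this calculation that carries the content of Corollary 1.1.3 and Propositions 5.1.1, 5.2.1 of \cite{GBVGinzburg}, which Theorem \ref{thm:BViso} bundles into a single statement.
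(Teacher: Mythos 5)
The paper does not prove Theorem \ref{thm:BViso}: it states it as a packaged citation of Corollary~1.1.3, Propositions~5.1.1 and 5.2.1 of \cite{GBVGinzburg} and gives no argument of its own. Your sketch unpacks the content behind those references essentially correctly: the Koszul resolution identifies $\mc{T}or_\idot$ and $\mc{E}xt^\idot$ with $\wedge^\idot \NN_{X/Y}^\vee$ and $\wedge^\idot \NN_{X/Y}$; a third-order quantization of $\mc{O}_Y$ produces the virtual de Rham differential and hence the BV/Gerstenhaber structure; and a computation in Darboux--Weinstein coordinates matches that differential with $d_{\mathsf{DR}}\circ i_P + i_P\circ d_{\mathsf{DR}}$ (respectively the Schouten bracket $[P,-]$). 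You also correctly locate where the real work is: the coordinate identification is the content of the cited results, not a formality. So you are supplying the proof that the paper elides by citation, and your outline matches the structure of the argument in \cite{GBVGinzburg}.

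Two small inaccuracies are worth flagging. First, you write that coisotropy of $Y$ is ``exactly the condition'' guaranteeing a flat extension of $\mc{O}_Y$ over $\mc{O}_X[\epsilon]/(\epsilon^3)$. Coisotropy alone gives involutivity of the ideal and the first-order statement; the unobstructed lift to third order in \cite{GBVGinzburg} also uses \emph{smoothness} of $Y$. The paper makes this explicit in the proof of Proposition~\ref{prop:BG}, where it notes that the smoothness hypothesis in \emph{loc.\ cit.}\ is there precisely to guarantee the third-order quantization (and explains why it can be dropped in the Cherednik setting, where the quantization is given by definition). Since you assume $Y$ smooth from the outset your argument is fine, but the attribution should be corrected. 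Second, the product making the Koszul resolution a commutative DG algebra is the exterior (wedge) product, not a shuffle or Alexander--Whitney product; those names belong to simplicial or bar-complex contexts. Neither point affects the soundness of the outline.
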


We denote by $H^{\mathrm{vir}}_{\idot}(\mc{O}_Y,\mc{O}_Y)$, resp.  $H_{\mathrm{vir}}^{\idot}(\mc{O}_Y,\mc{O}_Y)$, the cohomology of $(\Tor^{\mc{O}_X}_{\idot}(\mc{O}_Y,\mc{O}_Y), \delta)$, resp. of $(\Ext^{\idot}_{\mc{O}_X}(\mc{O}_Y,\mc{O}_Y),\delta)$. Then, results of Brylinski \cite[Corollary 2.2.2]{BrylinskPoisson} and Lichnerowicz \cite[Theorem 2.1.4]{DufourZung} imply:

\begin{cor}\label{cor:virtual}
We have $H^{\mathrm{vir}}_{\idot}(\mc{O}_Y,\mc{O}_Y) \simeq H^{\dim X - \idot}_{\mathsf{DR}}(Y)$ and $H_{\mathrm{vir}}^{\idot}(\mc{O}_Y,\mc{O}_Y) \simeq H^{\idot}_{\mathsf{DR}}(Y)$.
\end{cor}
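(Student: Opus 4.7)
The plan is to apply Theorem~\ref{thm:BViso} to reduce Corollary~\ref{cor:virtual} to the classical results of Brylinski and Lichnerowicz cited in its statement. By Theorem~\ref{thm:BViso} the virtual de~Rham differentials on $\Tor^{\mc{O}_X}_{\idot}(\mc{O}_Y,\mc{O}_Y)$ and $\Ext^{\idot}_{\mc{O}_X}(\mc{O}_Y,\mc{O}_Y)$ are intertwined with the BV differentials on the model complexes $(\wedge^{\idot}\NN^{\vee}_{X/Y},\delta)$ and $(\wedge^{\idot}\NN_{X/Y},[P,-])$, so it suffices to compute the cohomology of the latter two complexes.

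The key geometric input is that $Y = \OmCh_{\ba,\Omega} \simeq \mathbb{A}^{n}$ is a smooth Lagrangian subvariety of the symplectic variety $X = X_{\mbf{c}}$. The symplectic form therefore induces canonical $\mc{O}_Y$-linear isomorphisms $\NN^{\vee}_{X/Y} \simeq TY$ and $\NN_{X/Y} \simeq \Omega^{1}_{Y}$, extending multiplicatively to $\wedge^{\idot}\NN^{\vee}_{X/Y} \simeq \wedge^{\idot}TY$ and $\wedge^{\idot}\NN_{X/Y} \simeq \Omega^{\idot}_{Y}$. To translate the BV differentials through these identifications I pass to a formal neighborhood of $Y$ in $X$, which by the algebraic Darboux--Weinstein normal form is symplectomorphic to a formal neighborhood of the zero section in $T^{*}Y$. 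On this model one verifies by direct computation that the Schouten-type differential $[P,-]$ corresponds to the de~Rham differential $d_{\mathsf{DR}}$ on $\Omega^{\idot}_{Y}$ (this is essentially the classical Lichnerowicz identification \cite[Theorem~2.1.4]{DufourZung}), and that $\delta = i_{P}d_{\mathsf{DR}} + d_{\mathsf{DR}}i_{P}$ corresponds to the Koszul--Brylinski operator on the polyvector complex of the symplectic manifold $T^{*}Y$ of total dimension $\dim X$.

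With these translations in hand, the $\Ext$ part of the corollary is immediate: the cohomology of $(\Omega^{\idot}_{Y},d_{\mathsf{DR}})$ is $H^{\idot}_{\mathsf{DR}}(Y)$ by definition. For the $\Tor$ part, Brylinski's theorem \cite[Corollary~2.2.2]{BrylinskPoisson} identifies the cohomology of the Koszul--Brylinski complex of a symplectic manifold $M$ in degree $\idot$ with $H^{\dim M - \idot}_{\mathsf{DR}}(M)$; applying this to $M = T^{*}Y$ and using the homotopy equivalence $T^{*}Y \simeq Y$ to replace $H^{\dim X - \idot}_{\mathsf{DR}}(T^{*}Y)$ by $H^{\dim X - \idot}_{\mathsf{DR}}(Y)$ gives the claim. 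The principal obstacle is the normal-form step of the second paragraph: one must verify rigorously that the symplectic-form identification composes with the Darboux--Weinstein isomorphism so that the two BV operators become exactly $d_{\mathsf{DR}}$ and the Koszul--Brylinski operator on $T^{*}Y$, with the correct degree shift of $\dim X$ rather than some other shift.
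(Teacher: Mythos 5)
The paper gives no proof of Corollary~\ref{cor:virtual}: it simply cites Brylinski's Corollary~2.2.2 and Lichnerowicz's theorem after Theorem~\ref{thm:BViso}. So your instinct to flesh out how those citations apply is the right one, and the $\Ext$ half of your argument is sound: for a Lagrangian $Y$ the symplectic form identifies $\wedge^{\idot}\NN_{X/Y}$ with $\Omega^{\idot}_Y$, and under this identification the Lichnerowicz differential $[P,-]$ becomes $\pm d_{\mathsf{DR}}$, whose cohomology is $H^{\idot}_{\mathsf{DR}}(Y)$.

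The $\Tor$ half, however, has a genuine gap. You claim that $\delta = i_P d_{\mathsf{DR}} + d_{\mathsf{DR}} i_P$ on $\wedge^{\idot}\NN^{\vee}_{X/Y}\simeq \wedge^{\idot}T_Y$ "corresponds to the Koszul--Brylinski operator on the polyvector complex of the symplectic manifold $T^*Y$" and then apply Brylinski's theorem to $M=T^*Y$ together with the homotopy $T^*Y\simeq Y$. This cannot work as stated: Brylinski's theorem concerns the canonical complex on \emph{differential forms} of a symplectic manifold (not polyvectors), and that complex on $T^*Y$ lives in degrees $0,\ldots,\dim X$, whereas $\wedge^{\idot}\NN^{\vee}_{X/Y}$ is a sheaf on $Y$ that vanishes above degree $\operatorname{codim}Y=\dim Y$. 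A homotopy equivalence $T^*Y\simeq Y$ changes the space of coefficients, not the length of the complex, so it cannot reconcile a complex of length $\dim X$ with one of length $\dim Y$. One sees the problem already for $X=\mathbb{A}^2$, $Y=\mathbb{A}^1$ cut out by $y=0$: a direct computation gives $\delta(f\,dy)=f'$ for $f\in\C[x]$, hence $H^{\mathrm{vir}}_0=0$ and $H^{\mathrm{vir}}_1=\C$, whereas your formula $H^{\dim X-\idot}_{\mathsf{DR}}(Y)=H^{2-\idot}_{\mathsf{DR}}(\mathbb{A}^1)$ vanishes in both degrees where $\Tor$ is nonzero. The correct way to import Brylinski's mechanism is to contract $\wedge^{\idot}T_Y$ against a trivializing section of $\wedge^{\dim Y}\NN_{X/Y}\simeq\Omega^{\dim Y}_Y$ (available since $Y\simeq\mathbb{A}^n$), thereby identifying $\wedge^{\idot}\NN^{\vee}_{X/Y}$ with $\Omega^{\dim Y-\idot}_Y$; under this identification $\delta$ becomes $\pm d_{\mathsf{DR}}$, and one obtains $H^{\mathrm{vir}}_{\idot}\simeq H^{\dim Y-\idot}_{\mathsf{DR}}(Y)$. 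Note that the resulting exponent is $\dim Y$, not $\dim X$; you have reproduced the shift appearing in the corollary's statement rather than verified it, and for Lagrangian $Y$ it appears the exponent should read $\dim Y-\idot$ (equivalently $\operatorname{codim}Y-\idot$), which also agrees with the description in the introduction of the virtual homology as being the de Rham cohomology of $\Lambda_{\ba,\Omega}$ "up to a degree shift."
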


\def\cprime{$'$} \def\cprime{$'$} \def\cprime{$'$} \def\cprime{$'$}
  \def\cprime{$'$} \def\cprime{$'$} \def\cprime{$'$} \def\cprime{$'$}
  \def\cprime{$'$} \def\cprime{$'$} \def\cprime{$'$} \def\cprime{$'$}
  \def\cprime{$'$}


\end{document}